\setlist{itemsep=0em}
\tikzstyle{vertex}=[rectangle,
\tikzstyle{a}=[-angle 90, draw=color0, thick]
\tikzstyle{b}=[-angle 90, draw=color1, thick]
\tikzstyle{c}=[-angle 90, draw=color2, thick]
\tikzstyle{solid black}=[->, draw=black, thick, -angle 90]
\tikzstyle{dashed black}=[->, draw=black, thick, dashed, -angle 90]
\tikzset{arrow/.style={-angle 90, shorten >=2pt, shorten <=2pt}}
\tikzset{parallel_arrow_1/.style={-angle 90,
decoration={sl,raise=1mm},decorate, shorten >=2pt, shorten <=2pt}}
\tikzset{parallel_arrow_2/.style={-angle 90,
decoration={sl,raise=-1mm},decorate, shorten >=2pt, shorten <=2pt}}
\tikzset{aarrow/.style={-aa, shorten >=2pt, shorten <=2pt, densely dashed}}
\tikzset{parallel_aarrow_1/.style={-aa,
    decoration={sl,raise=1mm},decorate, shorten >=2pt, shorten <=2pt,
densely dashed}}
\tikzset{parallel_aarrow_2/.style={-aa,
    decoration={sl,raise=-1mm},decorate, shorten >=2pt, shorten <=2pt,
densely dashed}}
\newtheorem{theorem}{Theorem}[section]
\newtheorem{lem}[theorem]{Lemma}
\newtheorem{prop}[theorem]{Proposition}
\newtheorem{cor}[theorem]{Corollary}
\theoremstyle{definition}
\newtheorem{ex}[theorem]{Example}
\numberwithin{theorem}{section}
\numberwithin{theorem}{section}
\DeclareMathOperator{\im}{im}
\DeclareMathOperator{\dom}{dom}
\DeclareMathOperator{\Stab}{Stab}
\DeclareMathOperator{\Ker}{Ker}
\DeclareMathOperator{\Tr}{Tr}
\DeclareMathOperator{\rank}{rank}
\newcommand{\N}{\mathbb{N}}
\newcommand{\J}{\mathscr{D}}
\newcommand{\R}{\mathscr{R}}
\renewcommand{\L}{\mathscr{L}}
\newcommand{\D}{\mathscr{D}}
\renewcommand{\H}{\mathscr{H}}
\newcommand{\genset}[1]{\langle #1 \rangle}
\newcommand{\set}[2]{\{ #1 \mid #2\}}
\newcommand{\defn}[1]{\textbf{\textit{#1}}}
\newcommand{\GAP}{\textsc{GAP}~\cite{GAP4}\xspace}
\renewcommand{\to}{\longrightarrow}
\title{Computing congruences of finite inverse semigroups}
\author{Luna Elliott, Alex Levine and James D. Mitchell}
\subjclass[2020]{20M18, 20-08, 20M30}
\keywords{inverse semigroup, congruence, computational algebra}
\begin{document}
\begin{abstract}
  In this paper we present a novel algorithm for computing a congruence on an
  inverse semigroup from a collection of generating pairs. This algorithm uses
  a myriad of techniques from the theories of groups, automata,
  and inverse semigroups. An initial implementation of this algorithm
  outperforms existing implementations by several orders of magnitude.
\end{abstract}

\maketitle

%%%%%%%%%%%%%%%%%%%%%%%%%%%%%%%%%%%%%%%%%%%%%%%%%%%%%%%%%%%%%%%%%%%%%%%%
\section{Introduction}
%%%%%%%%%%%%%%%%%%%%%%%%%%%%%%%%%%%%%%%%%%%%%%%%%%%%%%%%%%%%%%%%%%%%%%%%

In this paper we are concerned with the question of computing two-sided
congruences of a finite inverse semigroup. The class of inverse semigroups lies
somewhere between the classes of groups and semigroups, with more useful
structure than semigroups in general, but less structure than groups.
A \defn{semigroup} is a set, usually denoted $S$, with an associative binary
operation, usually indicated by juxtaposing elements of $S$. An \defn{inverse
semigroup} is a semigroup $S$ such that for every element $s\in S$ there exists
a unique $s'\in S$ with $ss's=s$ and $s'ss'=s'$. The element $s'$ is usually
denoted $s^{-1}$, a choice which is at least partially justified by the fact
that if $S$ is a group and $s\in S$, then $s^{-1}$ is just the usual group
theoretic inverse of $s$. On the other hand, if $S$ is not a group and $s\in
S$, then neither $ss^{-1}$ nor $s^{-1}s$ is necessarily equal to the identity
of $S$, not least because $S$ need not have an identity.

Two-sided congruences are to semigroups what normal subgroups are to groups.
However, a \defn{two-sided congruence} on a semigroup $S$ is not a subsemigroup
of $S$ but rather an equivalence relation $\rho \subseteq S\times S$ with the
property that if $(x, y)\in \rho$ and $s\in S$, then $(xs, ys), (sx, sy)\in
\rho$. Although there is a definition of one-sided congruences also, akin to the
notion of subgroups of groups, we will be solely concerned with two-sided
congruences, and so we will drop the ``two-sided'' and henceforth refer to
``congruences'' to exclusively mean ``two-sided congruences''. Equivalently,
$\rho$ is a congruence on $S$ if $(xs, yt)\in \rho$ whenever $(x, y), (s, t)\in
\rho$, making $\rho$ a subsemigroup of $S\times S$ rather than $S$. Although
congruences of semigroups and normal subgroups of groups are analogous notions,
the definition for semigroups is a special case of that for
universal algebras; see, for example,~\cite[Section 5]{Burris2011}. For a
congruence \(\rho\) of a semigroup $S$, it will often, but not always, be
convenient for us to write \(x=_\rho y\) instead of \((x,y)\in \rho\).

If $G$ is a group and $A\subseteq G$, then algorithms for determining the least
normal subgroup $\genset{\genset{A}}$ of $G$ containing $A$ are one of the core
components of computational group theory. Following the nomenclature of \GAP we
refer to such algorithms as \defn{normal closure} algorithms. For example,
normal closure algorithms for permutation groups are considered in \cite[Section
5.4.1]{Seress2003}; for groups in general, in \cite[Section 3.3.2]{Holt2005};
or for computing all normal subgroups in~\cite{Hulpke1998}.
% TODO could say something about importance of normal subgroups in the group
% theory literature also, not only CGT, like for example the classification of
% finite simple groups?

% JDM: the next paragraph seems out of place, and unnecessary
% Like the normal subgroups of a group, the congruences of a semigroup form a
% lattice with the meet $\rho\wedge\sigma$ of congruences $\rho$ and $\sigma$
% being $\rho\cap \sigma$, and join $\rho\vee \sigma$ being the least congruence
% containing $\rho$ and $\sigma$.

Congruences of inverse semigroups have also been studied extensively in the
literature. For example, the lattices of congruences of various semigroups,
including many inverse semigroups, have been completely described, for example
in \cite{Liber, Malcev53,Malcev52, Wang}; and from the perspective of
computation in \cite{Merkouri2023aa, Araujo2022aa, Coleman2024, Torpey}. With
the single exception of~\cite{Torpey}, the existing
algorithms~\cite{Merkouri2023aa, Araujo2022aa, Coleman2024}, and their
implementations in ~\cite{libsemigroups,Semigroups, CREAM}, for computing
individual congruences on an inverse semigroup do not use any of the specific
structure of inverse semigroups. We will say more about the exception below.

The following notions have been, and will be here, indispensable for the study
of congruences on inverse semigroups.
Let $S$ be an inverse semigroup. We denote the set of idempotents of $S$ by
$E(S)$; and note that $E(S)$ is an inverse subsemigroup of $S$.
The \defn{kernel} of a congruence $\rho$ on an inverse semigroup $S$
is the inverse subsemigroup
\[
  \Ker(\rho) = \set{s\in S}{\text{there exists }e\in E(S),\ s=_\rho e}\leq S
\]
and the \defn{trace} of $\rho$ is the restriction of $\rho$ to the
idempotents of $S$:
\[
  \Tr(\rho) = \rho \cap \left(E(S)\times E(S)\right).
\]

If $S$ is an inverse semigroup and $\rho$ is a congruence on $E(S)$, then $\rho$
is said to be \defn{normal in $S$} if $s^{-1}xs =_\rho s^{-1}ys$ whenever $s\in
S$ and $x=_\rho y$. Similarly, if $T$ is an inverse subsemigroup of
$S$, then $T$ is
\defn{normal in $S$} if $s^{-1}ts\in T$ for all $s\in S$ and all $t\in T$. If
$T$ is a normal inverse subsemigroup of $S$ and $\tau$ is a normal
congruence on $E(S)$,
then $(T, \tau)$ is (rather unimaginatively) called a \defn{congruence pair} if
the following conditions hold:
\begin{enumerate}[label=(CP\arabic*)]
  \item\label{item-CP1} $ae\in T$ and $e=_\tau a^{-1}a$ implies that $a\in T$;
  \item\label{item-CP2} $a\in T$ implies that $aa^{-1} =_\tau a^{-1}a$;
\end{enumerate}
for all $a\in S$ and all $e\in E(S)$. It is well-known that the congruences of
an inverse semigroup $S$ are in one-to-one correspondence with the congruence
pairs on $S$; see, for example,~\cite[Theorem 5.3.3]{Howie}. The kernel-trace
description originates in \cite{Scheiblich}, and is described in almost all
books about semigroup theory; in addition to~\cite[Theorem 5.3.3]{Howie},
see~\cite[Proposition 1.3]{Grillet2017},~\cite[Section
5.1]{Lawson1998},or~\cite[Chapter III]{petrich_book}.

In this paper, we present various mathematical results that can be combined into
an algorithm for computing a congruence on a finite inverse semigroup. The aim
of these results is to allow the efficient computation of the least congruence
$R^{\sharp}$ on an inverse semigroup $S$ from a collection of generating pairs
$R \subseteq S\times S$. By ``compute'' a congruence $\rho$ we mean that we have
a representation of the congruence that is amenable to computation (i.e. that is
not larger than necessary, and can be computed relatively quickly), and that can
be used to answer questions about $\rho$ such as whether or not $(x,y)\in
S\times S$ belongs to \(\rho\); what is the number of classes in $\rho$; and
what are the elements of $x/\rho$?

A preliminary implementation of these algorithms in \cite{GAP4} and
\cite{libsemigroups}, indicates that there is, for some examples,
at least a quadratic speedup in comparison to the existing implementation
of~\cite{Torpey} in \cite{Semigroups} (which uses the kernel and trace); and
the implementation in \cite{libsemigroups} and \cite{Semigroups}
(for semigroups in general); see~\cref{appendix-perf} for details.

Although significantly faster than existing implementations, it is worth
mentioning that neither the time nor space complexity of the algorithms we
present is polynomial in the size of the input. For instance, one key step in
the algorithm we present is computing the trace of a congruence on an inverse
semigroup $S$. If $S$ is the symmetric inverse monoid on the set $\{1, \ldots,
n\}$, then $S$ can be represented using $O(n)$ space. However, $|E(S)|=2^n$, and
computing the idempotents in this case has complexity $O(2 ^n)$. The complexity
of the other steps in the algorithm are somewhat harder to describe; but they
also depend on $|E(S)|$. It seems unlikely to the authors that there is a
sub-exponential algorithm for computing a congruence on an inverse semigroup.
Again we refer the reader to~\cref{appendix-perf} for a more detailed
discussion.

The paper is organized as follows. In \cref{section-preliminaries} we provide
some details of the prerequisite notions from semigroup theory that we require.
In \cref{section-data-structure} we describe data structures for inverse
semigroups, and their quotients, that uses the theory of Green's relations, the
action of an inverse semigroup on its idempotents by conjugation, and an
analogue of Schreier's Lemma. The data structure consists of a generating set
$X$ for the inverse semigroup $S$, a certain automata-like graph $\Gamma_X$
encoding the action (of the previous sentence) and its strongly connected
components, and a finite sequence $G_1, \ldots, G_m$ of groups. For a quotient
of $S$, the data structure consists of the generating set $X$ for $S$, a
quotient of the graph $\Gamma_X$, and a sequence of normal subgroups $N_1,
\ldots, N_n$ of the groups in the data structure for $S$. In
\cref{section-compute-the-trace} we describe how to compute the trace of a
congruence using $\Gamma_X$ and a (guaranteed to terminate) variant of the
Todd-Coxeter Algorithm from~\cite{Coleman2024}. In
\cref{section-compute-the-groups}, we show how to obtain relatively small
collections of elements $Y_i$ of each group $G_i$ such that the normal closure
$\genset{\genset{Y_i}}$ is the required normal subgroup $N_i$. In
\cref{section-compute-the-kernel} we show how to obtain the elements of an
arbitrary class of a congruence, and apply this to determine the elements of
the kernel as a translate of the preimage of a coset of a normal subgroup under
a homomorphism of groups. In \cref{section-membership}, we discuss how to test
whether or not a pair of elements of an inverse semigroup belong to a
congruence. In \cref{section-meets-joins}, we indicate how to use the
Hopcroft-Karp Algorithm~\cite{Hopcroft1971aa} and a standard algorithm from
automata theory, for finding a finite state automata recognising the
intersection of two languages, to compute joins and meets of congruences on
inverse semigroups represented by the data structure described in
\cref{section-data-structure}. In the final section,
\cref{section-max-idempotent-sep}, we describe a completely separate algorithm
for computing the maximum idempotent separating congruence on an inverse
subsemigroup of a finite symmetric inverse monoid.

%%%%%%%%%%%%%%%%%%%%%%%%%%%%%%%%%%%%%%%%%%%%%%%%%%%%%%%%%%%%%%%%%%%%%%%%
\section{Preliminaries}\label{section-preliminaries}
%%%%%%%%%%%%%%%%%%%%%%%%%%%%%%%%%%%%%%%%%%%%%%%%%%%%%%%%%%%%%%%%%%%%%%%%

Let $S$ be an inverse semigroup. We denote the set of idempotents of $S$ by
$E(S)$. If $s, t \in S$, then we write $s \leq t$ if there exists $e\in E(S)$
such that $s = te$. The relation $\leq$ is a partial order on $S$ (see, for
example, \cite[Proposition 1.4.7]{Lawson1998}), usually referred to as the
\defn{natural partial order} on $S$. This definition may appear to be
inherently ``right-handed'', but it is not, since $s \leq t$ if and only if
there exists $f\in E(S)$ such that $s = ft$ \cite[Lemma 1.4.6]{Lawson1998}.
Similarly, if $s \leq t$ and $u \leq v$, then $su \leq tv$ \cite[Proposition
1.4.7]{Lawson1998} and $xey \leq xy$ for all $x, y\in S$ and $e\in E(S)$.

We define a \defn{word graph} $\Gamma = (N, E)$ over the alphabet $A$ to be
a directed graph with nodes $N$ and edges $E\subseteq N \times A \times N$.
Word graphs are just finite state automata without initial or terminal
states.

If $(\alpha, a, \beta)\in E$ is an edge in a word graph $\Gamma = (N, E)$, then
$\alpha$ is the \defn{source}, $a$ is the \defn{label}, and $\beta$ is the
\defn{target} of $(\alpha, a, \beta)$. A word graph $\Gamma$ is \defn{complete}
if for every node $\alpha$ and every letter $a\in A$ there is at least one edge
with source $\alpha$ labelled by $a$. A word graph $\Gamma = (N, E)$ is
\defn{finite} if the sets of nodes $N$ and edges $E$ are finite. A word graph
is \defn{deterministic} if for every node $\alpha\in N$ and every $a\in A$
there is at most one edge with source $\alpha$ and label $a$. Complete
deterministic word graphs are just  unary algebras with universe $N$ and
operations $f_a: N \to N$ defined by $(\alpha)f_a = \beta$ whenever $(\alpha,
a, \beta)$ is an edge in $\Gamma$; see~\cite{Burris2011} for more details.  The
perspective of unary algebras maybe helpful, for those familiar with this
notion, when we define word graph quotients and homomorphisms, for complete
word graphs these are
identical to the notions of quotients and homomorphisms of the associated unary
algebras. If $\alpha, \beta \in N$, then an \defn{$(\alpha, \beta)$-path} is a
sequence of edges $(\alpha_0, a_0, \alpha_{1}), \ldots, (\alpha_{n - 1}, a_{n -
1}, \alpha_{n})\in E$ where $\alpha_0 = \alpha$ and $\alpha_{n} = \beta$ and
$a_0, \ldots, a_{n - 1}\in A$. If $\alpha, \beta \in V$ and there is an
$(\alpha, \beta)$-path in $\Gamma$, then we say that $\beta$ is
\defn{reachable} from $\alpha$. If $\alpha$ is a node in a word graph
$\Gamma$, then the \defn{strongly connected component of $\alpha$} is the set
of all nodes $\beta$ such that $\beta$ is reachable from $\alpha$ and
$\alpha$ is reachable from $\beta$. If $\Gamma_1= (N_1, E_1)$ and $\Gamma_2=
(N_2, E_2)$ are word graphs over the same alphabet $A$, then $\phi:N_1\to
N_2$ is a \defn{homomorphism} if $(\alpha, a, \beta)\in E_1$ implies
$((\alpha)\phi, a, (\beta)\phi)\in E_2$. If $\kappa$ is an equivalence
relation on the nodes of a word graph $\Gamma = (N, E)$, then we define the
\defn{quotient $\Gamma/\kappa$ of  $\Gamma$ by $\kappa$} to be the word graph
with nodes $\set{\alpha/\kappa}{\alpha \in N}$ and edges $\set{(\alpha
/\kappa, a, \beta/\kappa)}{(\alpha, a, \beta) \in E}$. Of course, even if
$\Gamma$ is deterministic, the quotient $\Gamma/\kappa$ is not necessarily
deterministic. If $\Gamma$ is deterministic, then $\Gamma/\kappa$ is
deterministic if and only if $\kappa$ is a congruence on the unary algebra
associated to $\Gamma$.

If $S$ is a semigroup, then we denote by $S^1$ either: $S\cup \{1_S\}$ with an
identity $1_S\not\in S$ adjoined; or just $S$ in the case that $S$ already has
an identity.

The final ingredient that we require in this paper is that of Green's relations.
If $s, t\in S$, then \defn{Green's $\mathscr{R}$-relation} is the equivalence
relation on $S$ defined by $(s, t)\in \R$ if and only if $sS ^{1} =
\set{sx}{x\in S^1} = tS^{1}$. Green's $\L$-relation is defined analogously;
Green's $\H$-relation is just $\L\cap \R$; and Green's $\D$-relations is defined
to be $\L\circ \R$. If $S$ is finite, then $(s, t)\in \D$ if and only if
$S^1sS^1 = S^1tS^1$.
A \defn{group $\H$-class} is an $\H$-class containing an idempotent, since it
forms a group under the same multiplication as $S$.
Green's relations are fundamental to the study of
semigroups; we refer the reader to any of \cites{Howie, Lawson1998, Grillet2017,
petrich_book} for further details. If $T$ is a subsemigroup of $S$ (denoted
$T\leq S$), then we may write $\mathscr{K}^S$ and $\mathscr{K}^T$ to distinguish
the Green's relations on $S$ and $T$ when $\mathscr{K}\in \{\L, \R, \H, \D\}$.
If $s\in S$, then we denote the equivalence class of Green's
$\mathscr{K}$-relation containing $s$ by $K_s$  or $K_s^S$ if we want to
indicate the semigroup containing the class.

\begin{theorem}[\textbf{Location Theorem}, cf. Proposition 2.3.7 in
  \cite{Howie}]\label{thm-product-location}
  Let $S$ be a finite semigroup and let $a, b\in S$ be such that $(a, b)\in \D$.
  Then the following are equivalent:
  \begin{enumerate}[\rm (a)]
    \item
      $(ab, a), (ab, b)\in \D$;
    \item
      $(a, ab)\in \R$ and $(ab, b)\in \L$;
    \item
      there exists an idempotent $e\in S$ such that $(e, a)\in \L$
      and $(e, b)\in \R$.
  \end{enumerate}
\end{theorem}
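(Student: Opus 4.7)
The plan is to prove the cycle (b) $\Rightarrow$ (a) $\Rightarrow$ (b) and then (b) $\Leftrightarrow$ (c). The implication (b) $\Rightarrow$ (a) is immediate because $\mathscr{R}, \mathscr{L}\subseteq \mathscr{D}$, so the real content is in the other three directions.

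For (a) $\Rightarrow$ (b), the natural approach is to exploit the fact that $S$ is finite, through the standard \emph{stability} property: $ab\in aS^1$ always gives $abS^1\subseteq aS^1$, so it suffices to show that $(a,ab)\in \mathscr{D}$ forces the reverse containment. I would extract from $(a, ab)\in \mathscr{D}$ an equation of the form $a = u(ab)v = uabv$ and iterate to get $a = u^n a (bv)^n$ for all $n$. Since $S$ is finite, some power $(bv)^n$ is idempotent, and multiplying the identity by $(bv)^n$ on the right then shows $a(bv)^n = a$, which rewrites as $a \in abS^1$. A symmetric argument on the left gives $(ab, b)\in \mathscr{L}$.

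For (b) $\Rightarrow$ (c), the plan is to construct the idempotent $e$ explicitly. From (b) we can pick $x, y\in S^1$ with $a = abx$ and $b = yab$. I would then observe the symmetric identity $bx = y(abx) = ya$ and set $e := bx = ya$. Checking $e^2 = (bx)(ya) = y(abx) = ya = e$ shows $e$ is idempotent. That $e \in L_a$ follows from $ae = a(bx) = a$ (giving $a\in S^1 e$) together with $e = ya \in S^1 a$; and $e \in R_b$ follows from $eb = (ya)b = b$ and $e = bx\in bS^1$.

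For (c) $\Rightarrow$ (b), suppose $e$ is an idempotent with $(e,a)\in \mathscr{L}$ and $(e,b)\in \mathscr{R}$, and pick $u, v, s, t\in S^1$ with $a = ue$, $e = va$, $b = es$, $e = bt$. Using $e^2 = e$, I would first derive the simple identities $ae = a$ and $eb = b$, which are the real engine. Then $a = ae = a(bt) = (ab)t$ gives $a\in ab\,S^1$, so $a\,\mathscr{R}\,ab$; dually, $b = eb = (va)b = v(ab)$ gives $ab\,\mathscr{L}\,b$. The main obstacle is really just the (a) $\Rightarrow$ (b) step, since it is the only place where the finiteness of $S$ is actually used; the other steps are essentially bookkeeping in $S^1$.
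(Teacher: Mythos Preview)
The paper does not supply a proof of this theorem: it is quoted in \cref{section-preliminaries} as a standard preliminary (the classical Miller--Clifford ``Location Theorem'') and used as a black box thereafter. So there is no argument in the paper to compare against; your proposal is a self-contained proof of a result the authors simply cite.

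Your argument is correct in all essentials. The (a)$\Rightarrow$(b) step is the standard stability argument for finite semigroups (implicitly using $\mathscr{D}=\mathscr{J}$ to write $a=u(ab)v$), and the iteration $a=u^n a(bv)^n$ together with an idempotent power of $bv$ gives $a(bv)^n=a$, hence $a\in abS^1$; the dual gives the $\mathscr{L}$-statement. The equivalence (b)$\Leftrightarrow$(c) is handled cleanly. One small slip: in (b)$\Rightarrow$(c) you write $e^2=(bx)(ya)=y(abx)$, but the factorisation you need is $e^2=(ya)(bx)=y(abx)=ya=e$; as written, $(bx)(ya)=bxya$ does not rearrange to $y(abx)$. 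With that correction the proof goes through.
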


We will make repeated use of the following straightforward result also.

\begin{lem}\label{lem-D-leq}
  If \(S\) is a finite inverse semigroup, $e, f\in E(S)$ are such that $e\leq
  f$, and \((e, f) \in \mathscr{D}^S\), then \(e = f\).
\end{lem}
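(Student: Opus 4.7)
The plan is to apply the Location Theorem (\cref{thm-product-location}) to deduce that \(e \L f\), and then invoke the standard fact that an \(\L\)-class of an inverse semigroup contains at most one idempotent.

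First I would rewrite the hypothesis \(e \leq f\) in the more convenient form \(ef = fe = e\). Since \(e \leq f\), by definition \(e = fg\) for some \(g \in E(S)\); combining this with the fact that idempotents of an inverse semigroup commute delivers both equalities.

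The main step is to apply \cref{thm-product-location} with \(a = e\) and \(b = f\). The pair \((a, b) = (e, f)\) lies in \(\D\) by hypothesis, and by the previous paragraph \(ab = ef = e\), so both \((ab, a) = (e, e)\) and \((ab, b) = (e, f)\) are in \(\D\). Condition (a) of the theorem therefore holds, and condition (b) then gives \((e, f) = (ab, b) \in \L\).

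To finish, I would note that any two \(\L\)-related idempotents of an inverse semigroup must coincide: from \(e = xf\) and \(f = ye\) for some \(x, y \in S^1\) one deduces \(ef = xf^2 = xf = e\) and \(fe = ye^2 = ye = f\), and since idempotents commute, \(e = ef = fe = f\). I do not anticipate any real obstacle; the only points requiring care are setting up the roles of \(a\) and \(b\) correctly in the Location Theorem so that the product \(ab\) collapses back to one of the factors, and using commutativity of idempotents in the final step.
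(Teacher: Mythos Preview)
Your argument is correct. The paper states \cref{lem-D-leq} without proof, describing it only as a ``straightforward result,'' so there is nothing to compare against directly; your use of the Location Theorem to force \((e,f)\in\L\) and then the uniqueness of idempotents in an \(\L\)-class is a clean and complete justification.

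If anything, your route is slightly more elaborate than what the authors likely have in mind elsewhere in the paper: in the proof of \cref{lem-eliminate-idempotents} and in related arguments they freely pass to a faithful representation in some \(I_X\), where \(e\leq f\) becomes \(\dom(e)\subseteq\dom(f)\) and \((e,f)\in\D^S\) forces \(\rank(e)=\rank(f)\), giving \(e=f\) immediately. Your approach has the merit of staying intrinsic and invoking only results already stated in the preliminaries.
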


%%%%%%%%%%%%%%%%%%%%%%%%%%%%%%%%%%%%%%%%%%%%%%%%%%%%%%%%%%%%%%%%%%%%%%%%
\section{A data structure for inverse semigroups and their
quotients}\label{section-data-structure}
%%%%%%%%%%%%%%%%%%%%%%%%%%%%%%%%%%%%%%%%%%%%%%%%%%%%%%%%%%%%%%%%%%%%%%%%

In this section, we describe the data structure for inverse semigroups given
in~\cite[Section 5.6]{East2019aa}. We suppose that such an inverse semigroup
$S$ is given by a set of generators $X$ consisting of elements where both
products and equality of elements can be effectively computed. For example, $X$
may consist of functions from a finite set to itself (called
  \defn{transformations} in the semigroup literature, injective
  functions between subsets of a finite set (called \defn{partial
  permutations}), or matrices over a semiring. On the other hand, we do not
  consider the case, for example, where $S$ is defined by means of a
  presentation (consisting of generators and relations), since the
  problem of determining whether or not two words in
  the generators are equal is undecidable in general. We also intend that the
  data structure be used to represent a finite inverse semigroup, although the
  definition makes sense for infinite inverse semigroups too.

  The \defn{symmetric inverse monoid} $I_n$ for some $n\in \N$ is the set of all
  partial permutations of $\{1, \ldots, n\}$ with the operation of
  composition of binary relations. It might also be worth noting that, by the
  Vagner-Preston Representation Theorem (\cite[Theorem
  5.1.7]{Howie}), every inverse semigroup is isomorphic
  to a subsemigroup of some symmetric inverse monoid. As such from a
  mathematical perspective nothing would be lost by supposing that $S$ was an
  inverse subsemigroup of a symmetric inverse monoid. However, since we are
  concerned with practical computation, finding an inverse subsemigroup of a
  symmetric inverse semigroup that is isomorphic to $S$ may be prohibitively
  expensive, and since it is also not required we define our data structure
  without these assumptions and restrictions.

  If \(S\) is such an inverse semigroup, then the
  data structure for \(S\) consists of the following:
  \begin{enumerate}[label=(I\arabic*)]
    \item\label{item-A1} a generating set $X$ for \(S\);
    \item\label{item-A2} the word graph $\Gamma_{X}$ with nodes $E(S)$
      and edges $\{(e, x,
      x^{-1}ex)\mid e\in E(S), x\in X\}$;
    \item\label{item-A3} the strongly connected components of $\Gamma_X$;
    \item\label{item-A4} a generating set for the group $\mathscr{H}$-class
      $H_e$ of one representative $e\in E(S)$ in every strongly connected
      component of $\Gamma_X$.
  \end{enumerate}
  In the case that $S$ is finite, the word graph $\Gamma_X$ can be found in
  $O(|E(S)||X|)$ time and space (assuming that products in $S$ can be found in
  constant time). The strongly connected components of $\Gamma_X$ can be found
  from $\Gamma_X$ using algorithms from graph theory (such as those of
  Gabow~\cite{Gabow2000aa} or Tarjan~\cite{Tarjan1972aa}). Given the strongly
  connected components of $\Gamma_X$, the groups from \ref{item-A4} can be
  determined using the analogue of Schreier's Lemma given in \cite[Proposition
    2.3(c) and Algorithm
  3]{East2019aa}. For further context, the strongly connected
  components of $\Gamma_X$ are in
  1-1 correspondence with the $\D$-classes of $S$, and within a
  $\D$-class the group $\H$-classes are isomorphic as groups.
  Thus knowing a single group $\H$-class per $\D$-class means we know
  every group $\H$-class in the $\D$-class.
  This data structure can be used to answer many of the
  fundamental questions about $S$ that arise in a computational setting, such
  as membership testing in $S$, determining the Green's structure, and the size
  of $S$; see~\cite{East2019aa} for more details.

  If $S$ is an inverse semigroup $S$, $R\subseteq S \times S$, and $R
  ^{\sharp} = \rho$,
  we will show how to compute a data structure for the quotient $S/\rho$
  from the data structure for $S$. This data structure consists of:
  \begin{enumerate}[label=(Q\arabic*)]
    \item\label{item-Q1} the generating set $X$ for \(S\);
    \item\label{item-Q2}  the quotient word graph
      $\Gamma_{X}/\Tr(\rho)$ with nodes
      $E(S)/\Tr(\rho)$ and edges $\{(e/\Tr(\rho), x, (x^{-1}ex)/\Tr(\rho))\mid
      e\in E(S), x\in X\}$;
    \item\label{item-Q3} the strongly connected components of
      $\Gamma_X/ \Tr(\rho)$;
    \item\label{item-Q4} the generating sets for one group
      $\mathscr{H}$-class per strongly
      connected component of $\Gamma_X/\Tr(\rho)$.
  \end{enumerate}
  Clearly for \ref{item-Q2} we must compute $\Tr(\rho)$; and given
  \ref{item-Q2} we can compute the strongly connected components as we did for
  $S$ itself. Without a representation of $\rho$ (beyond $R$) we have
  no means of
  representing $X/\rho$, and hence we cannot determine the generating
  sets for the
  group $\mathscr{H}$-classes required in \ref{item-Q4}.
  We show how to compute $\Tr(\rho)$ from $R$ in
  \cref{section-compute-the-trace};
  and show how to compute the required group $\mathscr{H}$-classes
  in~\cref{section-compute-the-groups}.

  The quotient data structure is sufficient for representing the inverse
  semigroup $S/\rho$, and can be used to compute various aspects of $\rho$, such
  as the number of classes, or representatives of every class. But it does not
  suffice for other purposes, such as: computing the $\Ker(\rho)$, or, more
  generally, the elements of a congruence class $s/\rho$; or checking membership
  in $\rho$.  We describe one way of computing the kernel in
  \cref{section-compute-the-kernel} by providing an algorithm for finding the
  elements of a congruence class $s/\rho$ for a given $s\in S$. Checking
  membership in $\rho$ requires a means of testing membership in
  $\Ker(\rho)$. In \cref{section-membership} we show that the problem of testing
  membership in $\Ker(\rho)$ reduces to the problem of check membership in a
  coset of a normal subgroup of a group.

  It might be worth noting that none of the algorithms presented in this paper
  require any computation or representation of $\Ker(\rho)$ except the algorithm
  for computing $\Ker(\rho)$ itself.

  Throughout this paper we will use the notation from this section for $S$, the
  congruence $\rho$, and the associated data structures.

  %%%%%%%%%%%%%%%%%%%%%%%%%%%%%%%%%%%%%%%%%%%%%%%%%%%%%%%%%%%%%%%%%%%%%%%%
  \section{Computing the trace}\label{section-compute-the-trace}
  %%%%%%%%%%%%%%%%%%%%%%%%%%%%%%%%%%%%%%%%%%%%%%%%%%%%%%%%%%%%%%%%%%%%%%%%

  In this section we show how to compute the trace of a congruence on the
  inverse semigroup $S$ from the set of generating pairs $R\subseteq S \times
  S$.

  \begin{lem}[\textbf{Generating pairs for the
    trace}]\label{lem-gen-pairs-trace}
    If $S$ is an inverse semigroup and $\rho = R ^ {\sharp}$ is a congruence of
    $S$, then the trace $\Tr(\rho)$ of $\rho$ is the least normal congruence of
    $E(S)$ in $S$ containing
    \[
      \{(aea^{-1}, beb^{-1})\mid e\in E(S),\ (a, b)\in R\}.
    \]
  \end{lem}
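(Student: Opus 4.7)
Let $\tau$ denote the least normal congruence on $E(S)$ in $S$ that contains
\[
T = \{(aea^{-1}, beb^{-1}) \mid e \in E(S),\ (a,b) \in R\}.
\]
The plan is to prove the two inclusions $\tau \subseteq \Tr(\rho)$ and $\Tr(\rho) \subseteq \tau$ separately.

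The first inclusion reduces to verifying $T \subseteq \Tr(\rho)$; since $\Tr(\rho)$ is itself a normal congruence on $E(S)$ in $S$ (an immediate consequence of $\rho$ being a congruence on $S$ together with $s^{-1}E(S)s \subseteq E(S)$ for every $s \in S$), minimality of $\tau$ then finishes. For the verification, fix $(a,b) \in R \subseteq \rho$: as $a^{-1}/\rho$ satisfies the inverse axioms for $a/\rho = b/\rho$ in $S/\rho$ and inverses in an inverse semigroup are unique, $(a^{-1}, b^{-1}) \in \rho$, so $(aea^{-1}, beb^{-1}) \in \rho$. A short computation using that idempotents of $S$ commute shows that $aea^{-1}$ and $beb^{-1}$ lie in $E(S)$, so this pair is in $\Tr(\rho)$.

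For the reverse inclusion, suppose $(e, f) \in \Tr(\rho)$. Since $\rho = R^{\sharp}$, there is a sequence $e = x_0, x_1, \ldots, x_n = f$ in $S$ whose consecutive pairs are elementary Tietze moves, $(x_i, x_{i+1}) = (u_i s_i v_i,\ u_i t_i v_i)$ for some $u_i, v_i \in S^{1}$ and $(s_i, t_i) \in R \cup R^{-1}$. The plan is to prove $(x_i x_i^{-1}, x_{i+1} x_{i+1}^{-1}) \in \tau$ for every $i$; then transitivity of $\tau$ together with the identity $gg^{-1} = g$ for $g \in E(S)$ forces $(e,f) = (x_0 x_0^{-1}, x_n x_n^{-1}) \in \tau$.

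For a single step, compute $x_i x_i^{-1} = u_i (t_i \cdot v_i v_i^{-1} \cdot t_i^{-1}) u_i^{-1}$ and similarly for $x_{i+1} x_{i+1}^{-1}$. When $v_i \in S$, put $g := v_i v_i^{-1} \in E(S)$: the pair $(s_i g s_i^{-1}, t_i g t_i^{-1})$ (or its reverse) lies in $T \subseteq \tau$, and normality of $\tau$ applied with conjugating element $u_i^{-1}$ produces the left-conjugate pair $(u_i(s_i g s_i^{-1})u_i^{-1},\ u_i(t_i g t_i^{-1})u_i^{-1}) \in \tau$, with $u_i = 1_S$ interpreted as trivial conjugation. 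The delicate case is $v_i = 1_S$, where $v_i v_i^{-1}$ is the adjoined identity of $S^{1}$ and need not lie in $E(S)$. Here I would derive $(s_i s_i^{-1}, t_i t_i^{-1}) \in \tau$ by combining the two members of $T$ obtained from the choices $e := s_i^{-1} s_i$ and $e := t_i^{-1} t_i$, which simplify to
\[
(s_i s_i^{-1},\ t_i s_i^{-1} s_i t_i^{-1}) \quad \text{and} \quad (s_i t_i^{-1} t_i s_i^{-1},\ t_i t_i^{-1});
\]
multiplying each by the missing endpoint idempotent, invoking commutativity of idempotents in $E(S)$ to identify the resulting right-hand sides, and chaining by transitivity yields $(s_i s_i^{-1}, t_i t_i^{-1}) \in \tau$, after which normality by $u_i$ completes the step. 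This endpoint case, where no single element of $T$ simultaneously connects $s_i s_i^{-1}$ with $t_i t_i^{-1}$, is the main technical obstacle of the proof.
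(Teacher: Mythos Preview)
Your proof is correct and follows the same approach as the paper: both establish $\tau \subseteq \Tr(\rho)$ directly from $R \subseteq \rho$, and obtain the reverse inclusion by running an elementary $R^{\sharp}$-sequence from $e$ to $f$ and tracking the idempotents $x_i x_i^{-1}$, using normality of $\tau$ to handle the outer factor $u_i$ (the paper's $p_i$). You are in fact more careful than the paper in isolating the case $v_i = 1_S$ when $S$ is not a monoid---the paper silently asserts $q_i q_i^{-1} \in E(S)$---and although your description of how the two displayed members of $T$ combine is a little imprecise (the ``right-hand sides'' do not literally coincide after one multiplication), the argument goes through: multiplying the first pair by $t_i t_i^{-1}$ and the second by $s_i s_i^{-1}$, using $t_i s_i^{-1} s_i t_i^{-1} \leq t_i t_i^{-1}$ and $s_i t_i^{-1} t_i s_i^{-1} \leq s_i s_i^{-1}$, and chaining with the original pairs yields $(s_i s_i^{-1}, s_i s_i^{-1} t_i t_i^{-1})$ and $(s_i s_i^{-1} t_i t_i^{-1}, t_i t_i^{-1})$ in $\tau$, hence $(s_i s_i^{-1}, t_i t_i^{-1}) \in \tau$ as claimed.
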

  \begin{proof}
    Let $N$ denote the set of pairs in the statement, and let $\nu$ be the least
    normal congruence on $E(S)$ containing $N$. We must show that
    $\nu=\Tr(\rho)$.

    If $(a, b)\in R$ and $e\in E(S)$ are arbitrary, then certainly $a
    =_\rho b$ and
    so $ae =_\rho  be$ and $a^{-1} =_\rho  b^{-1}$. Hence $aea^{-1} =_\rho
    beb^{-1}$ and so $aea^{-1} =_{\Tr(\rho)}  beb ^{-1}$. Therefore
    $\nu\subseteq
    \Tr(\rho)$.

    For the converse containment, suppose that $e=_{\Tr(\rho)} f$. Then $e=_\rho
    f$, and hence $e=_{R ^ {\sharp}}
    f$. So there exist $s_0 = e, s_1, \ldots, s_n = f$
    where $s_i = p_iu_iq_i$ and $s_{i +1} = p_iv_iq_i$ for some $p_i,
    q_i \in S^1$
    and $(u_i, v_i)\in R$ for all $i$. We set $e_i = s_is_i^{-1}$ for every $i$.
    Then $e_0 = e$ and $e_n= f$. For every $i$, $e_i = s_is_i ^{-1} =
    p_iu_iq_iq_i^{-1}u_i ^{-1}p_i^{-1}$ and $e_{i + 1} = s_{i + 1} s_{i
    + 1} ^{-1} =
    p_iv_iq_iq_i^{-1}v_i ^{-1}p_i^{-1}$. Since $q_iq_i ^{-1} \in
    E(S)$ and $(u_i,
    v_i)\in R$, it follows that $(u_iq_iq_i^{-1}u_i ^{-1},
    v_iq_iq_i^{-1}v_i ^{-1})
    \in N\subseteq \nu$ by definition. Hence, since $\nu$ is normal, $e_i=
    p_iu_iq_iq_i^{-1}u_i ^{-1}p_i^{-1}=_\nu   p_iv_iq_iq_i^{-1}v_i
    ^{-1}p_i^{-1}=e_{i+1} $ for all $i$. Thus $e=e_0=_\nu e_n= f$, as required.
  \end{proof}

  For the remainder of this section we require \(S\) to be a monoid,
  by adjoining
  an identity $1_S$ if necessary. If $\sigma$ is any equivalence relation on
  $E(S)$, then we define $\Gamma_X/\sigma$ to be the word graph with nodes
  $E(S)/\sigma$ and edges $(e/\sigma, x, (x^{-1}ex)/\sigma)$ for all $e\in E(S)$
  and all $x\in X$. It is routine to verify that $\sigma$ is a normal congruence
  on $E(S)$ with respect to $S$ if and only if $\Gamma_X/\sigma$ is
  deterministic.
  In this case, $\sigma$ is completely determined by $\Gamma_X/\sigma$ as
  follows.

  \begin{lem}\label{lem-test-mem-trace}
    If $\sigma$ is any normal congruence on $E(S)$ and $e, f\in E(S)$, then
    $e=_{\sigma}f$ if and only if for all $x_1, \ldots, x_n, y_1, \ldots,
    y_m\in X$ such that $e = x_1\cdots x_n$ and $f = y_1\cdots y_m$ the words
    $x_1\cdots x_n$ and $y_1\cdots y_m$ both label $(1_S, e/\sigma)$-paths in
    $\Gamma_X/\sigma$.
  \end{lem}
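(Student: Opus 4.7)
The plan is to identify, for each word $x_1 \cdots x_n$ over $X$, the unique node of $\Gamma_X/\sigma$ at which the path labelled by this word starting at $1_S/\sigma$ terminates; once this endpoint is pinned down explicitly, the lemma reduces to bookkeeping. The remark immediately preceding the statement guarantees that $\Gamma_X/\sigma$ is deterministic (because $\sigma$ is assumed normal), so such a path is uniquely determined whenever it exists, and the definition of the edges of $\Gamma_X/\sigma$ can be applied freely.

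The main technical step is a straightforward induction on $n$ showing that the $x_1 \cdots x_n$-labelled path from $1_S/\sigma$ terminates at $\bigl((x_1 \cdots x_n)^{-1}(x_1 \cdots x_n)\bigr)/\sigma$. The base case $n = 0$ is the empty path at $1_S/\sigma = (1_S^{-1} 1_S)/\sigma$. For the inductive step, set $p = (x_1 \cdots x_n)^{-1}(x_1 \cdots x_n) \in E(S)$; by the definition of the edges, reading $x_{n+1}$ at $p/\sigma$ lands at $(x_{n+1}^{-1} p x_{n+1})/\sigma$, and a one-line manipulation rewrites $x_{n+1}^{-1} p x_{n+1}$ as $(x_1 \cdots x_{n+1})^{-1}(x_1 \cdots x_{n+1})$, closing the induction.

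Applying this with $e = x_1 \cdots x_n$, and using that $e^{-1} = e$ together with $ee = e$ since $e \in E(S)$, the endpoint of the $x$-path is simply $e/\sigma$; symmetrically, the endpoint of the $y$-path is $f/\sigma$. Both implications are then immediate. For $(\Rightarrow)$, if $e =_\sigma f$ then $e/\sigma = f/\sigma$ and both paths end at the common node $e/\sigma$. For $(\Leftarrow)$, choose any factorisation of $f$ over $X$, which exists because $X$ generates $S$ (the empty word covers the edge case $f = 1_S$ when the identity has been adjoined); the hypothesis then forces the endpoint $f/\sigma$ of this $y$-path to equal $e/\sigma$, i.e.\ $e =_\sigma f$.

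I do not foresee a genuine obstacle: the whole argument rests on the conjugation-style inductive computation, and the only subtlety is the treatment of an adjoined identity in the $(\Leftarrow)$ direction, which is handled by allowing the empty word as a factorisation.
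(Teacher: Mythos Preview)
Your proposal is correct and follows the same underlying idea as the paper, namely that the word $x_1\cdots x_n$ labels a $(1_S/\sigma,\ ((x_1\cdots x_n)^{-1}(x_1\cdots x_n))/\sigma)$-path in $\Gamma_X/\sigma$, so factorisations of $e$ and $f$ yield paths ending at $e/\sigma$ and $f/\sigma$ respectively. The paper's proof merely asserts the two implications without writing out this inductive computation or addressing the empty-word edge case; your version makes these explicit, which is an improvement in clarity rather than a different method.
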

  \begin{proof}
    If $e=x_1\cdots x_n\in E(S)$ where $x_i\in X$ labels a path from
    $1_S/\sigma$
    to $f/\sigma$ in $\Gamma_X/\sigma$, then $e=_{\sigma} f$.

    Conversely, if $e=_{\sigma} f$ and $e=x_1\cdots x_n$ and $f = y_1\cdots
    y_m$ where $x_i, y_j\in X$, then $x_1\cdots x_n$ and $y_1\cdots y_m$ both
    label $(1_S, e/\sigma)$-paths in $\Gamma_X/\sigma$.
  \end{proof}

  The next result is an immediate corollary of \cref{lem-test-mem-trace}.

  \begin{cor}[\textbf{Normal congruences as quotients of word graphs}]
    \label{lem-norm-cong-quot}
    There is a one-to-one correspondence between the normal congruences of
    $E(S)$ and the deterministic quotients of $\Gamma_{X}$.
  \end{cor}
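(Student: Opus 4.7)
The plan is to exhibit the bijection explicitly via the assignment $\sigma \mapsto \Gamma_X/\sigma$, and to verify well-definedness, injectivity, and surjectivity, all of which should follow almost immediately from \cref{lem-test-mem-trace} together with the observation recorded in the paragraph just above it.

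First I would dispense with well-definedness: the paragraph preceding \cref{lem-test-mem-trace} already states that $\Gamma_X/\sigma$ is deterministic if and only if $\sigma$ is a normal congruence on $E(S)$, so the image of a normal congruence under the assignment is indeed a deterministic quotient of $\Gamma_X$. For injectivity, I would appeal directly to \cref{lem-test-mem-trace}: given a normal congruence $\sigma$ and $e, f \in E(S)$, the relation $e =_{\sigma} f$ holds precisely when, writing $e = x_1\cdots x_n$ and $f = y_1\cdots y_m$ with $x_i, y_j \in X$, both words label $(1_S, e/\sigma)$-paths in $\Gamma_X/\sigma$ ending at a common node. Since $S$ has been promoted to a monoid so that $1_S \in E(S)$ is a node of $\Gamma_X$, this provides an explicit reconstruction of $\sigma$ from $\Gamma_X/\sigma$, and hence distinct normal congruences yield distinct quotients.

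For surjectivity, any deterministic quotient of $\Gamma_X$ has the form $\Gamma_X/\kappa$ for some equivalence relation $\kappa$ on $E(S)$ such that the quotient is deterministic; by the same observation preceding \cref{lem-test-mem-trace}, any such $\kappa$ is a normal congruence on $E(S)$, and clearly it is sent to the given quotient by the assignment.

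I do not anticipate a substantive obstacle, as the corollary is essentially a repackaging of \cref{lem-test-mem-trace}. The only point requiring a moment's care is confirming that the correspondence is literally one-to-one (as opposed to one-to-one up to isomorphism of word graphs): this is immediate because the nodes of $\Gamma_X/\sigma$ are by definition the $\sigma$-classes of $E(S)$, so the partition, and hence the equivalence relation, is encoded exactly in the set of nodes of the quotient.
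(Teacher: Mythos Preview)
Your proposal is correct and takes essentially the same approach as the paper, which gives no proof beyond declaring the result an immediate corollary of \cref{lem-test-mem-trace}. You have correctly unpacked the three pieces (well-definedness, injectivity, surjectivity) using precisely the ingredients the paper intends---the paragraph preceding \cref{lem-test-mem-trace} for the equivalence between normality of $\sigma$ and determinism of $\Gamma_X/\sigma$, and \cref{lem-test-mem-trace} (or, as you observe at the end, simply the definition of the node set of $\Gamma_X/\sigma$) for the recovery of $\sigma$ from the quotient.
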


  The trace $\Tr(\rho)$ of a congruence $\rho = R ^{\sharp}$ on an inverse
  semigroup $S$ can therefore be computed by:
  \begin{enumerate}[label=(T\arabic*)]
    \item \label{item-T1}
      computing the set $R'$ from \cref{lem-gen-pairs-trace};
    \item\label{item-T2}
      find the greatest quotient of $\Gamma_{X}$ containing $R'$ using the
      variant of the Todd-Coxeter Algorithm described in Section 5
      of~\cite{Coleman2024}.
  \end{enumerate}

  Next we consider an example to illustrate the steps \ref{item-T1} and
  \ref{item-T2}. Each element of a finite symmetric inverse monoid is
  expressible
  as a product of chains and disjoint cycles. So we write \((i_1, \ldots, i_n)\)
  for a cycle and \([i_1, \ldots, i_n]\) for a chain. When points are fixed we
  write \((i)\) to denote that \(i\) is fixed as omitted points are not in the
  domain of the described partial permutation.
  % TODO actually define chain

  \begin{ex}\label{ex-trace}
    In this example we show how to compute the trace of the least congruence
    $\rho$ on the symmetric inverse monoid $I_4$ (consisting of all the partial
    permutations on the set $\{1, 2, 3, 4\}$) containing the pair:
    \[
      (a, b) := \left(
        (1)(2)(3),
        (1\ 2\ 3)
      \right) \in I_4\times I_4.
    \]
    We use the following generating set for $I_4$:
    \[
      X := \left\{
        x_1 :=  (1\ 2\ 3\ 4),\quad
        x_2 :=  (1\ 2)(3)(4),\quad
        x_3 :=  [4\ 3\ 2\ 1]
      \right\}.
    \]
    If $N$ is the set of generating pairs for $\Tr(\rho)$ from
    \cref{lem-gen-pairs-trace}, then a maximal subset $M$ of $N$ such that
    $M\cap M ^{-1}= \varnothing$ is:
    \[
      M :=
      \left\{
        ((1), (2)), ((1),(3)), ((2), (3)), ((1)(2), (2)(3)), ((1)(2),
      (1)(3)), ((2)(3), (1)(3))\right\}.
    \]
    Obviously, $M$ also generates $\Tr(\rho)$. A diagram of the word graph
    $\Gamma_X$ in this example can be seen in \cref{fig-trace-1}. A diagram of
    the greatest quotient of $\Gamma_{X}$ containing $(a, b)$ is shown in
    \cref{fig-trace-2}.

    \begin{figure}
      \centering
      \includegraphics{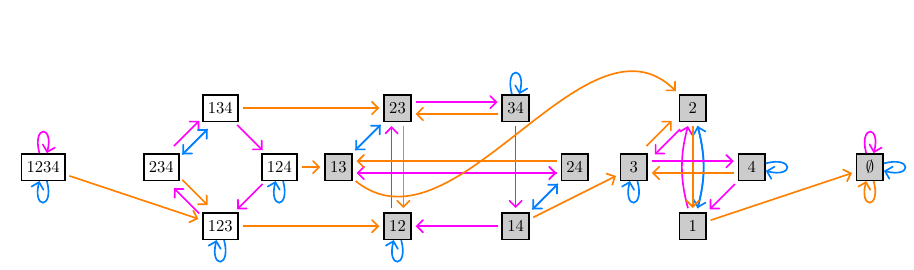}
      \caption{Diagram of the word graph $\Gamma_X$ from \cref{ex-trace}, where
        \(x_1\) is represented in magenta, \(x_2\) in blue and
        \(x_3\) in orange.
        Each node is the idempotent that is the identity on the set
        in its label.
        Shaded nodes correspond to the idempotents belonging to the only
      non-singleton class of $\Tr(\rho)$.}
      \label{fig-trace-1}
    \end{figure}

    \begin{figure}
      \centering
      \includegraphics{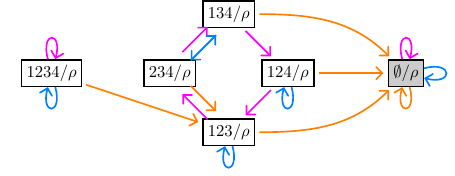}
      \caption{Diagram of the maximum quotient of the word graph $\Gamma_X$ from
        \cref{ex-trace} by the generating pairs of $\Tr(\rho)$ from
        \cref{lem-gen-pairs-trace}, where \(x_1\) is represented in
      magenta, \(x_2\) in blue and \(x_3\) in orange.}
      \label{fig-trace-2}
    \end{figure}
  \end{ex}

  %%%%%%%%%%%%%%%%%%%%%%%%%%%%%%%%%%%%%%%%%%%%%%%%%%%%%%%%%%%%%%%%%%%%%%%%
  \section{Computing the group $\mathscr{H}$-classes of the quotient}
  \label{section-compute-the-groups}
  %%%%%%%%%%%%%%%%%%%%%%%%%%%%%%%%%%%%%%%%%%%%%%%%%%%%%%%%%%%%%%%%%%%%%%%%

  In this section we show how to compute the group $\mathscr{H}$-class
  component~\ref{item-Q4} of the quotient data structure.

  We will repeatedly make use of the following simple lemma, which we record for
  the sake of completeness.
  \begin{lem}\label{lem-eliminate-idempotents}
    If $S$ is finite, $e\in E(S)$, $x,y,z\in S$, and $(zxey, z)\in\J$,
    then $zxey=zxy$.
  \end{lem}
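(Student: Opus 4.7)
The plan is to set $a := fxey$ and $b := fxy$, show that $a \leq b$ in the natural partial order, then that $a\,\mathscr{D}\,b$, and finally deduce $a = b$ via an idempotent reduction that invokes \cref{lem-D-leq}.

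First, I would verify $a \leq b$. Since $e \in E(S)$, the element $ey$ equals $y$ left-multiplied by an idempotent, so $ey \leq y$ by the alternative characterization of the natural partial order. Combining this with $fx \leq fx$ via the compatibility of the natural partial order with multiplication (Lawson, Proposition 1.4.7), we obtain $fxey \leq fxy$, i.e., $a \leq b$.

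Next, I would show $a\,\mathscr{J}\,b$. From $a \leq b$ we have $a \in bS^1$, so $S^1 a S^1 \subseteq S^1 b S^1$. Also $b = f \cdot (xy) \in fS^1$, so $S^1 b S^1 \subseteq S^1 f S^1$. Finally, the hypothesis $(a, f) \in \mathscr{J}$ gives $S^1 f S^1 = S^1 a S^1$. Chaining these containments forces $S^1 a S^1 = S^1 b S^1 = S^1 f S^1$; in particular $a\,\mathscr{J}\,b$, and since $S$ is finite this upgrades to $a\,\mathscr{D}\,b$.

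The last step, and the only one that requires any care, is deducing $a = b$ from $a \leq b$ together with $a\,\mathscr{D}\,b$. I would pass to idempotents: using that the natural partial order is preserved under inversion and multiplication, $a \leq b$ implies $aa^{-1} \leq bb^{-1}$. Since $a\,\mathscr{R}\,aa^{-1}$ and $b\,\mathscr{R}\,bb^{-1}$ in any inverse semigroup, the relation $a\,\mathscr{D}\,b$ propagates to $aa^{-1}\,\mathscr{D}\,bb^{-1}$. Now \cref{lem-D-leq} applies to the two idempotents and yields $aa^{-1} = bb^{-1}$. Therefore $a\,\mathscr{R}\,b$, and using the characterization $a \leq b \iff a = aa^{-1}b$, we conclude $a = aa^{-1}b = bb^{-1}b = b$, as desired. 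The only real obstacle is bookkeeping the inverse-semigroup identities; no deep tool beyond \cref{lem-D-leq} is needed.
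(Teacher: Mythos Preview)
Your proof is correct, but it takes a genuinely different route from the paper's. The paper argues concretely: it embeds $S$ (via Vagner--Preston) into a symmetric inverse monoid $I_X$ and then uses a rank count, observing that $\rank(f) = \rank(fxey) \leq \rank(fxy) \leq \rank(f)$ forces $\rank(fxey) = \rank(fxy)$, and since $fxey$ is a restriction of $fxy$ (inserting the idempotent $e$ only restricts the domain), equality of ranks gives equality of elements. Your argument is the representation-free analogue of this: the inequality $a \leq b$ plays the role of ``$a$ is a restriction of $b$'', and $a\,\mathscr{D}\,b$ plays the role of ``same rank'', with \cref{lem-D-leq} replacing the rank-comparison step. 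What your approach buys is intrinsicness (no appeal to a faithful representation) and an explicit reduction to the already-stated \cref{lem-D-leq}; what the paper's approach buys is brevity and immediate intuition for readers who think in terms of partial permutations. Either is a perfectly acceptable proof of this auxiliary lemma.
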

  \begin{proof}
    % TODO expand, in particular, define rank
    Via the Vagner-Preston Representation Theorem (\cite[Theorem
    5.1.7]{Howie}) we may assume without loss of generality that $S$
    is an inverse
    subsemigroup of the symmetric inverse monoid $I_n$ for some
    non-negative integer $n$.
    If $x\in I_n$, then we denote the number of points in the domain
    (and image) of the function $x$, by $\rank(x)$.
    Since $(z, zxey)\in \J$, it follows that $\rank(z) = \rank(zxey) \leq
    \rank(zxy)\leq \rank(z)$, yielding equality throughout. In particular,
    $\rank(zxey) = \rank(zxy)$, and since $e$ is an idempotent and $S$ is
    finite, it follows that $zxey = zxy$, as required.
  \end{proof}

  If $s/\rho$ is an idempotent in $S/\rho$, then by Lallement's Lemma there
  exists $e\in E(S)$ such that $e/\rho = s/\rho$, and so  $E(S) \cap s/\rho =
  e/\Tr(\rho)$. We define $f\in E(S)$ to be the meet of $e/\Tr(\rho)$, that is,
  \[
    f = \bigwedge e/\Tr(\rho),
  \]
  and we denote the group $\mathscr{H}$-class of $f$ in $S$ by $G$.

  The following lemma describes the group \(\H\)-classes in the
  quotient \(S / \rho\) in terms of the group \(\H\)-classes in \(S\)
  and a normal
  subgroup.
  \begin{lem}\label{lem-trace-facts}
    Suppose that $s\in S$ is such that $s/\rho$ is an idempotent in
    $S/\rho$. If $e\in E(S)$ is such that $e/\rho = s/\rho$,
    $f = \bigwedge e/\Tr(\rho)$, and
    $N = H_f^S\cap \left(f/\rho\right)$, then the following hold:
    \begin{enumerate}[\rm (a)]
      \item\label{lem-trace-facts-a}
        $N$ is a normal subgroup of $G = H_{f}^S$;
      \item \label{lem-trace-facts-b}
        $f/\rho$ is an inverse subsemigroup of $S$ and $N$ is the
        minimum non-empty ideal of $f/\rho$;
      \item \label{lem-trace-facts-c}
        the group $\mathscr{H}$-class $H_{s/\rho}^{S/\rho}$ is isomorphic to
        $G/N$.
    \end{enumerate}
  \end{lem}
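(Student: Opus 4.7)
The plan is to handle the three parts in sequence: (a) is a direct verification, (b) exploits the minimality of $f$ among the idempotents of $f/\rho$, and (c) applies the first isomorphism theorem to the quotient map restricted to $G$. For (a), I would first note that $N$ contains $f$ (the identity of $G$), and is closed under products and inverses because $\rho$ is a congruence with $ff = f$ and $f^{-1} = f$. For normality, if $g \in G$ and $n \in N$, then $gng^{-1}$ lies in $H_f^S$ by group closure, and $gng^{-1} =_\rho gfg^{-1} = gg^{-1} = f$.

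For (b), closure of $T := f/\rho$ under products and inverses is analogous. The idempotents of $T$ are precisely $f/\Tr(\rho)$, whose meet is $f$ by construction, so $f$ is the minimum idempotent of $T$. From here the plan has three steps. First, I would identify $N$ with $H_f^T$: the inclusion $H_f^T \subseteq H_f^S \cap T$ is automatic, and for the reverse, if $a \in H_f^S \cap T$ then the equalities $a = f \cdot a$, $f = a \cdot a^{-1}$, and their left-sided duals, all use factors from $T$, giving $a \mathscr{H}^T f$. Second, I would apply \cref{lem-D-leq} to show $D_f^T = H_f^T$: any idempotent of $T$ that is $\mathscr{D}^T$-related to $f$ is also $\mathscr{D}^S$-related to $f$ and lies above $f$ in the natural partial order, so it equals $f$. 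Third, for every $t \in T$, $tt^{-1} \in E(T)$ gives $tt^{-1} \geq f$, so $f = f \cdot tt^{-1} \in TtT$; thus $T^1 f T^1$ is contained in every principal ideal of $T$, making $D_f^T = H_f^T = N$ the minimum non-empty ideal of the finite inverse semigroup $T$.

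For (c), I would consider $\phi\colon G \to S/\rho$ defined by $g \mapsto g/\rho$. Writing $\bar{x} := x/\rho$, the image lies in $H_{\bar{e}}^{S/\rho}$ since $\bar{e} = \bar{f}$ and $\overline{gg^{-1}} = \overline{g^{-1}g} = \bar{f}$ for all $g \in G$. For surjectivity, given $\bar{s} \in H_{\bar{f}}^{S/\rho}$, the natural lift is $fsf$: indeed $\overline{fsf} = \bar{f}\bar{s}\bar{f} = \bar{s}$ because $\bar{f}$ is the identity of the group $\mathscr{H}$-class containing $\bar{s}$. The hard part of the whole proof is showing that this lift $fsf$ actually lies in $H_f^S$. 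For this I would note that $sfs^{-1}$ is an idempotent whose $\rho$-class equals $\bar{s}\bar{f}\bar{s}^{-1} = \bar{s}\bar{s}^{-1} = \bar{f}$, so $sfs^{-1} \in f/\Tr(\rho)$ and hence $sfs^{-1} \geq f$; combined with commutativity of idempotents this yields $(fsf)(fsf)^{-1} = f \cdot sfs^{-1} = f$, and the mirror argument using $s^{-1}fs$ handles $(fsf)^{-1}(fsf) = f$. Finally, $\ker \phi = \{g \in G : g =_\rho f\} = G \cap f/\rho = N$ directly from the definitions, so the first isomorphism theorem yields $G/N \cong H_{\bar{e}}^{S/\rho}$.
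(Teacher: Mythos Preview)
Your proof is correct and, for parts (a) and (c), follows essentially the same outline as the paper. Part (a) is identical. For (c), both you and the paper factor the quotient map through $G$, but your surjectivity argument is more careful: the paper simply asserts that a representative $h$ of an arbitrary $h/\rho \in H_{e/\rho}^{S/\rho}$ already lies in $G$, whereas you construct the explicit lift $fsf$ and verify $(fsf)(fsf)^{-1} = (fsf)^{-1}(fsf) = f$ using the minimality of $f$ in its trace class. This is the genuinely nontrivial step, and you handle it cleanly.

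For (b) your route differs somewhat from the paper's. The paper verifies directly that $N$ is a two-sided ideal of $f/\rho$ by taking $n\in N$, $s\in f/\rho$ and showing $sn(sn)^{-1}=(sn)^{-1}sn=f$ via an order argument together with \cref{lem-D-leq}. You instead identify $N$ with $H_f^T$, use \cref{lem-D-leq} once to collapse $D_f^T$ to $H_f^T$, and then observe that $f$ lies in every principal two-sided ideal of $T$ because it sits below every idempotent of $T$. Your approach is slightly more structural (it makes transparent that $N$ is the kernel of the finite inverse semigroup $T$), while the paper's is more hands-on; both rely on the same key ingredient, namely that $f$ is the minimum idempotent of its $\rho$-class.
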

  \begin{proof}
    \begin{enumerate}[wide,
        label=(\alph*),leftmargin=*,labelindent=0pt,itemsep=0.4em]
      \item Let \(g, h \in N\). Since \(f\) is an idempotent \(h^{-1} \in f /
        \rho\) and so \(gh^{-1} \in f^2 / \rho = f / \rho\). Thus \(N\) is a
        subgroup of \(G = H_f^S\). If \(n \in N\) and \(g \in G\), then \(g^{-1}
        n g =_\rho g^{-1} f g = f\) and so \(g^{-1} n g \in f / \rho\) and \(N\)
        is normal.

      \item
        We first show that \(f / \rho\) is an inverse subsemigroup of \(S\). Let
        \(a, b \in f / \rho\). Then \(ab =_\rho f^2 = f\) and \(a^{-1} =_\rho
        f^{-1} = f\) and so \(ab, a^{-1} \in f / \rho\). Thus \(f / \rho\) is an
        inverse subsemigroup of \(S\). Next we show that \(N\) is the minimum
        non-empty ideal of \(f / \rho\). Clearly, since \(f \in N\), \(N\) is
        non-empty.

        We begin by showing that $N$ is a left ideal. Suppose that \(n \in N\)
        and \(a \in f / \rho\). Since $a, n\in f/\rho$ and $f/\rho$ is a inverse
        semigroup, \(an, an(an)^{-1}, (an)^{-1}an\in f/\rho\). It
        follows from the
        minimality of $f$ that \(an(an)^{-1}\geq f\) and \((an)^{-1}an\geq f\).
        On the other hand, \(nf=n\) (because $f$ is the identity of
        the group $N$ and $n\in N$) implying that
        \((an)^{-1}an=(an)^{-1}anf\leq f\). Thus
        \((an)^{-1}an=f\). On the other hand, \(an(an)^{-1} \geq f =
        (an)^{-1}an\) and
        \((an(an)^{-1}, (an)^{-1}an) \in \mathscr{D}\), and so
        \cref{lem-D-leq} implies that \((an)^{-1}an=f\) also. Thus
        \((an, f) \in
        \mathscr{H}^S\) and so \(an \in N\), as required.

        We have shown that \(N\) is a left ideal, by symmetry it is a
        right ideal
        also. It remains to show that $N$ is the minimum ideal of $f/\rho$.
        Every non-empty ideal $I$ of \(f/\rho\) contains an element
        of the form \(fc\in N\) for
        some \(c\in f/\rho\). Thus \(f = fc(fc)^{-1} \in I\) since
        $f$ is the unique
        idempotent in \(N\). Therefore the ideal $I$ contains all of
        \(N\) and so \(N\) is the minimum ideal.

      \item We define
        \[\psi \colon G / N \to H^{S / \rho}_{s / \rho}\quad\text{ by
          }\quad Ng \mapsto g /
        \rho.\]
        To show that \(\psi\) is well-defined, we must show that \(\psi\) maps
        into \(H^{S / \rho}_{s / \rho}\) and that \(\psi\) does not
        depend on the
        choice of coset representative. Let \(Ng \in G / N\). Then \((Ng)\psi
          ((Ng)\psi)^{-1} = (g / \rho) \cdot (g^{-1}/  \rho) = f / \rho = s /
        \rho\) and by symmetry \(((Ng)\psi)^{-1} (Ng)\psi = s / \rho\) and so
        \((Ng)\psi \in H_{s / \rho}^{S / \rho}\). If \(h \in Ng\), then \(h =
        ng\) for some \(n \in N\) and so \(h = ng =_\rho fg = g\). So \((Nh)\psi
        = h / \rho = g /\rho = (Ng)\psi\) and \(\psi\) is well-defined. We will
        next show that \(\psi\) is a homomorphism. Let \(Ng, Nh \in G
        / N\). Then
        \[
          (Ng\cdot Nh)\psi = (Ngh)\psi = gh / \rho = (g / \rho) \cdot (h /
          \rho) = (Ng) \psi \cdot (Nh) \psi,
        \]
        and \(\psi\) is a homomorphism. To show \(\psi\) is injective, let \(Ng,
        Nh \in G / N\) be such that \((Ng)\psi = (Nh)\psi\). Then \(g / \rho = h
        / \rho\) and so \(gh^{-1} \in f / \rho\). Thus \(gh^{-1} \in N\), and it
        follows that \(Ng = Nh\), as required. It remains to show that \(\psi\)
        is surjective. Let \(k / \rho \in H_{s / \rho}^{S / \rho}\).
        Then \(k \in G\) and so \(k / \rho = (Nk) \psi\) and \(\psi\)
        is surjective.\qedhere
    \end{enumerate}
  \end{proof}

  The next lemma is the key result in this section, permitting us to
  express \(N\) in terms of \(R\) and the word graph \(\Gamma_X\) of \(S\),
  and allowing for the efficient computation of \(N\).

  \begin{lem}[\textbf{Generating the normal
    subgroups}]\label{lem-normal-subgroup-generators}
    If the strongly connected component of $f$ in $\Gamma_X$ is $\{e_1 = f, e_2,
    \ldots, e_r\}$ for some $r$, and for every $i$, we choose $s_i\in
    S$ to be the
    label of an $(e_1, e_i)$-path in $\Gamma_X$, then $N=H_f^S\cap
    \left(f/\rho\right)$ is the normal closure of
    \[
      \{fs_iab^{-1}s_i^{-1}\mid (a,b)\in R,\ i\in \{1, \ldots, r\}\} \cap H_f ^S
    \]
    in $H_f^S$.
  \end{lem}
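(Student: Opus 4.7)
The plan is to prove the inclusions $\tilde N \subseteq N$ and $N \subseteq \tilde N$ separately, where $\tilde N$ denotes the normal closure in $G = H_f^S$ of the set displayed in the statement.

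For $\tilde N \subseteq N$, it suffices to show that every generator $k = f s_i a b^{-1} s_i^{-1}$ lying in $H_f^S$ is in $N$. Since $(a,b) \in R$ gives $a =_\rho b$ and hence $ab^{-1} =_\rho aa^{-1}$, we obtain $k =_\rho \eta$ where $\eta = f s_i aa^{-1} s_i^{-1}$. A direct check (using that all idempotents of $S$ commute) shows that $s_i aa^{-1} s_i^{-1}$ is an idempotent, so $\eta$ is itself an idempotent with $\eta \leq f$. Since $k \in H_f^S$ we have $k \mathrel{\mathscr{R}^S} f$, whence $\eta/\rho = k/\rho \mathrel{\mathscr{R}^{S/\rho}} f/\rho$; but $\mathscr{R}$-related idempotents of an inverse semigroup must coincide, so $\eta =_\rho f$, and therefore $k =_\rho f$ and $k \in H_f^S \cap (f/\rho) = N$. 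As $N$ is normal in $G$ by part~(a) of \cref{lem-trace-facts}, the inclusion $\tilde N \subseteq N$ follows.

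For $N \subseteq \tilde N$, fix $n \in N$; the task is to express $n$ as a product of $G$-conjugates of elements of the listed generating set. Since $n =_\rho f$, the Malcev characterisation of $\rho = R^\sharp$ supplies a chain $n = n_0, n_1, \ldots, n_m = f$ with $n_j = p_j u_j q_j$, $n_{j+1} = p_j v_j q_j$ for some $p_j, q_j \in S^1$ and $(u_j, v_j) \in R \cup R^{-1}$. The plan is to process this chain step by step: after ``sandwiching'' with $f$ and using \cref{lem-eliminate-idempotents} to absorb idempotents lying in the $\mathscr{D}$-class of $f$, each elementary difference $n_j n_{j+1}^{-1}$ is to be recognised as a $G$-conjugate of one of the generators $fs_i a b^{-1} s_i^{-1}$; the relevant index $i$ is determined by the action of an initial segment of $p_j$ on $f$ in $\Gamma_X$, which must land on one of $e_1,\ldots,e_r$ whenever the step contributes nontrivially to the product.

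The principal obstacle is the bookkeeping required to ``straighten'' the chain so that each step yields exactly one conjugate of a generator, especially because intermediate $n_j$ may leave the $\mathscr{D}$-class of $f$. A more structural alternative avoids this by working inside the kernel-trace framework: one defines a congruence $\rho'$ on $S$ with $\Tr(\rho') = \Tr(\rho)$ whose normal subgroup at $f$ is $\tilde N$ (with the subgroups at the other $e_i$ obtained from $\tilde N$ by conjugation along $s_i$, and extended to idempotents outside the $\mathscr{D}$-class of $f$ by imposing \cref{item-CP1} and \cref{item-CP2}), verifies that $R \subseteq \rho'$, and then invokes the minimality of $\rho = R^{\sharp}$ to conclude $\rho \subseteq \rho'$ and hence $N = H_f^S \cap f/\rho \subseteq H_f^S \cap f/\rho' = \tilde N$.
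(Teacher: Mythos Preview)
Your argument for $\tilde N \subseteq N$ is correct and in fact slightly slicker than the paper's: the paper shows $\eta = f$ as an equality in $S$, whereas you only need $\eta =_\rho f$, which you obtain from the observation that $\mathscr{R}$-related idempotents in $S/\rho$ coincide.

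The reverse inclusion, however, is not proved. Your first sketch (process the elementary chain) is the paper's approach, but you do not carry it out, and your description of how the index $i$ is found (``the action of an initial segment of $p_j$ on $f$ in $\Gamma_X$'') is not quite right. Your ``structural alternative'' has a genuine gap: specifying a trace together with a normal subgroup at one idempotent does not determine a congruence $\rho'$ on $S$. A congruence is determined by a congruence pair $(K,\tau)$, and \ref{item-CP1}--\ref{item-CP2} are compatibility conditions between a \emph{given} $K$ and $\tau$, not a recipe for manufacturing $K$ from a subgroup of one $\mathscr{H}$-class. Even granting some construction of $\rho'$, the step ``verify $R\subseteq\rho'$'' is exactly where the work lies: for $(a,b)\in R$ one must check $ab^{-1}\in\Ker(\rho')$, and since $ab^{-1}$ need not lie anywhere near $H_f^S$, this verification forces you to understand how the generators of $\tilde N$ propagate through $S$ --- which is precisely the content of the elementary-chain computation you were hoping to avoid.

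For comparison, the paper executes the chain argument as follows. Given $g\in N$ and an elementary sequence $f=a_0,\ldots,a_n=g$, one first sandwiches by $f$: since each $a_k\in f/\rho$ and $N$ is the minimum ideal of $f/\rho$ (\cref{lem-trace-facts}\ref{lem-trace-facts-b}), every $fa_kf$ lies in $N\subseteq G$. One then shows $(fa_kf)(fa_{k+1}f)^{-1}\in\tilde N$ for each $k$. Using \cref{lem-eliminate-idempotents} this product collapses to $t=fp_kb_kc_k^{-1}p_k^{-1}f$. The index is found by noting that $p_k^{-1}fp_k$ lies in the strongly connected component of $f$ in $\Gamma_X$, hence equals $s_j^{-1}fs_j$ for some $j$; setting $u=fp_ks_j^{-1}f$ one checks $u\in G$ and $u^{-1}tu=fs_jb_kc_k^{-1}s_j^{-1}f$, which is one of the listed generators of $\tilde N$. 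Normality of $\tilde N$ then gives $t\in\tilde N$, and induction finishes.
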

  \begin{proof}
    Let $N'$ denote the normal closure of the set in the statement.

    To show that $N'\subseteq N$, suppose that $i\in \{1, \ldots, k\}$ and $(a,
    b)\in R$ are such that $fs_iab^{-1}s_i^{-1}\in H_f^S$. We must show that
    $fs_iab^{-1}s_i^{-1}\in N$; that is, $fs_iab^{-1}s_i^{-1}=_\rho f$ (this is
      sufficient because, by
      \cref{lem-trace-facts}\ref{lem-trace-facts-a}, \(N\)
    is a normal subgroup of \(G\)). We begin by showing that
    $f = fs_iaa^{-1}s_i^{-1}$. Since $s_iaa^{-1}s_i^{-1}\in E(S)$, it
    follows that
    $fs_iaa^{-1}s_i^{-1}\leq f$. On the other hand, since
    $fs_iab^{-1}s_i^{-1}\in
    H_f^S$, which is a group, it follows that
    \begin{align*}
      f &= (fs_iab^{-1}s_i^{-1})(fs_iab^{-1}s_i^{-1})^{-1} && f\text{
        is the identity
      of }H_f^S \\
      &= fs_iab^{-1}s_i^{-1}s_iba^{-1}s_i^{-1}f\\
      &\leq fs_iaa^{-1}s_i^{-1}f && b^{-1}s_i^{-1}s_ib\in E(S)\\
      &= fs_iaa^{-1}s_i^{-1} && \text{idempotents commute in }S.
    \end{align*}
    It follows that $f = fs_iaa^{-1}s_i^{-1}$, and so, in particular, $f=_\rho
    fs_iaa^{-1}s_i^{-1}$. Since $(a, b)\in R$ we have \(a=_\rho b\), it follows
    that $a^{-1}=_\rho b^{-1}$, and so $fs_iab^{-1}s_i^{-1}=_\rho
    fs_iaa^{-1}s_i^{-1}$. Therefore by the transitivity of $\rho$,
    $fs_iab^{-1}s_i^{-1}=_\rho f$, as required.

    For the converse containment ($N\subseteq N'$), suppose that $g\in N =
    H_f^S\cap (f/\rho)$. Since $f=_\rho g$, there exists an elementary sequence
    \[
      a_0 = f,  \ldots, a_i = p_ib_iq_i, a_{i + 1}=p_ic_iq_i, \ldots, a_n = g
    \]
    where $p_i, q_i\in S$ and $(b_i, c_i)$ or $(c_i, b_i)\in R$ for all $i$. By
    assumption, $a_i=_\rho a_0=f$, or equivalently, $a_i\in f/\rho$ for every
    $i$. Thus, since $f/\rho$ is a subsemigroup of $S$ and $N$ is the minimum
    non-empty ideal of $f/\rho$ (\cref{lem-trace-facts}\ref{lem-trace-facts-b}),
    $fa_if\in N$ for all $i$.

    We will show that $fa_kf \in N'$ for every $k$ by induction. Certainly, $a_0
    = f\in N'$ since $f$  is the identity of $G$ and $N'\leq G$. Assume that
    $fa_kf\in N'$ for all $k \leq i$. To prove that $fa_{i + 1}f\in N'$, it
    suffices to show that
    \[
      (fa_if)(fa_{i+1}f)^{-1}\in N'.
    \]
    But \((fa_if)(fa_{i+1}f)^{-1}\in N\), and is thus \(\mathscr{H}\)-related to
    \(f\), hence
    \begin{align*}
      (fa_if)(fa_{i+1}f)^{-1} &= (fp_ib_iq_if)(fq_i^{-1}c_i^{-1}p_i^{-1}f) \\
      & = fp_ib_ic_i^{-1}p_i^{-1}f &&\text{by \cref{lem-eliminate-idempotents}.}
    \end{align*}
    If we set $t = fp_ib_ic_i^{-1}p_i^{-1}f$, then $t =
    (fa_if)(fa_{i+1}f)^{-1} \in
    N\leq Gs$.

    By assumption $fa_if\in N'\leq G$, and $f$ is the identity of the group $G$.
    Hence $(fa_if)^{-1}(fa_if)=f$ and so
    \[
      f = (fa_if)^{-1}(fa_if) = (b_iq_if)^{-1}(p_i^{-1}fp_i)(b_iq_if).
    \]
    This shows that $f$ and $p_i^{-1}fp_i$ belong to the same strongly connected
    component of $\Gamma_X$, and so there exists $j\in\{1, \ldots,
    k\}$ such that
    $s_j^{-1}fs_j = p_i^{-1}fp_i$. Clearly,
    \[
      s_js_j^{-1}f = fs_js_j^{-1}f = fs_js_j^{-1}s_js_j^{-1}f =
      s_js_j^{-1}fs_js_j^{-1}f = f^ 2 = f,
    \]
    and $fp_ip_i^{-1} = f$\footnote{We may assume without loss of
      generality that
      $S$ is an inverse subsemigroup of $I_n$. Since $fp_ip_i^{-1} =
      f|_{\dom(p_i)}$ and $\rank(f) = \rank(s_js_j^{-1}fs_js_j^{-1}) =
      \rank(s_j^{-1}fs_j) = \rank(p_i^{-1}fp_i)$, it follows that
      $\dom(p_i) \cap
      \dom(f)=\im(p_i^{-1})\cap \dom(f) = \dom(f)$ (otherwise
        $\rank(p_i^{-1}fp_i)
      < \rank(f))$. In other words $\dom(f)\subseteq \dom(p_i)$ and so
      $fp_ip_i^{-1}
    = f|_{\dom(p_i)} = f$.}.

    If $u =  fp_is_j^{-1}f$, then
    \[
      s_jp_i^{-1}\cdot fp_is_j^{-1}f = s_js_j^{-1} fs_js_j^{-1}f =
      s_js_j^{-1}f = f \text{ and }
      fp_is_j^{-1}f\cdot s_jp_i^{-1} = fp_ip_i^{-1}f p_i p_i^{-1} =
      fp_ip_i^{-1} = f.
    \]
    In particular, $(u, f)\in \mathscr{H}$, and so $u\in G$. Since
    $t\in G$ also,
    $u^{-1}tu\in N'$ if and only if $t\in N'$ since $N'$ is a normal subgroup of
    $G$. But
    \begin{align*}
      u^{-1}tu & = (fp_is_j^{-1}f)^{-1}\cdot
      (fp_ib_ic_i^{-1}p_i^{-1}f)\cdot (fp_is_j^{-1}f) \\
      & = fs_j\cdot p_i^{-1}fp_i \cdot b_ic_i^{-1}\cdot p_i^{-1}fp_i
      \cdot s_j^{-1}f \\
      & = fs_j\cdot s_j^{-1}fs_j \cdot b_ic_i^{-1}\cdot s_j^{-1}fs_j
      \cdot s_j^{-1}f && s_j^{-1}fs_j = p_i^{-1}fp_i\\
      & = fs_j b_ic_i^{-1}s_j^{-1}f\in N'.
    \end{align*}
    Hence $t\in N'$, and so $(fa_if)(fa_{i+1}f)^{-1}\in N'$, and so $fa_{i +
    1}f\in N'$, as required. We have shown that $fa_if \in N'$ for all $i$, and
    so, in particular, $fa_nf = fgf = g\in N'$.
  \end{proof}

  The algorithm for computing the normal subgroups component
  \ref{item-Q4} of the
  quotient data structure is:

  \begin{enumerate}[label=(N\arabic*)]
    \item \label{item-N1}
      find one $e\in E(S)$ for every strongly connected component of
      $\Gamma_X/\Tr(\rho)$;
    \item \label{item-N2}
      for each representative $e\in E(S)$ from \ref{item-N1}, set $N$ to be the
      trivial group, and iterate through the generating set given in
      \cref{lem-normal-subgroup-generators} for $H_f^S\cap (f/\rho)$
      where $f$ is
      the meet of $e/\Tr(\rho)$, taking the normal closure of $N$ and each
      generator.
  \end{enumerate}

  Next, we continue the example started in~\cref{ex-trace}, and compute the
  generating sets for the normal subgroups in the quotient
  using~\cref{lem-normal-subgroup-generators}.

  \begin{ex}
    \label{ex-norm-subgroups}
    We compute the normal subgroup component \ref{item-Q4} of the quotient data
    structure for the the least congruence  $\rho$ on the symmetric inverse
    monoid $I_4$  containing the pair:
    \[
      (a, b) := \left(
        (1)(2)(3),
        (1\ 2\ 3)
      \right).
    \]
    Firstly the graph $\Gamma_X/\Tr(\rho)$ given in \cref{fig-trace-2} clearly
    has 3 strongly connected components, and so we are attempting to compute 3
    normal subgroups. If the representatives chosen in \ref{item-N1} are
    $1_{I_4} = (1)(2)(3)(4)$, $(1)(2)(3)$, and $\varnothing$, then the normal
    subgroups found in \ref{item-N2} are the trivial group, $\genset{(1\ 3\
    2)}$, and the trivial group, respectively. Thus the quotient groups are (up
    to isomorphism) the symmetric group on $\{1, 2, 3, 4\}$, the cyclic group of
    order $2$, and the trivial group, respectively.
  \end{ex}

  Given the data structure for the quotient $S/\rho$ from
  \ref{item-Q1},~\ref{item-Q2},~\ref{item-Q3}, and~\ref{item-Q4}, the number of
  congruence classes of $\rho$ can be determined as follows. Suppose that
  $f_1/\Tr(\rho), \ldots, f_r/\Tr(\rho)$ are representatives of the strongly
  connected components $C_1, \ldots, C_r$ of $\Gamma_X/\Tr(\rho)$
  (from~\ref{item-Q3}) for some $r\geq 1$ and for some $f_1, \ldots, f_r\in
  E(S)$. We may assume without loss of generality that each $f_i$ is the least
  (with respect to the natural partial order on $S$) idempotent in its trace
  class $f_i/\Tr(\rho)$. That is, $f_i = \bigwedge f_i/\Tr(\rho)$ for every $i$.
  Then the number of congruence classes of $S$ is:
  \begin{equation}\label{equation-nr-classes}
    \sum_{i = 1}^r |G_i/N_i| |C_i|^2
  \end{equation}
  where $G_i = H_{f_i}^S$ is the group $\H^S$-class of $f_i$ and $N_i =H_{f_i}^S
  \cap \left(f_i/\rho\right)$ is the normal subgroup of $G_i$ from
  \cref{lem-trace-facts}; see~\cite[Section 5.6]{East2019aa} for
  further details.

  We can also determine a set of representatives of the congruence classes of
  $\rho$ from the quotient data structure. Clearly from
  \eqref{equation-nr-classes} there is a one-to-one correspondence between
  congruence classes of $\rho$ and elements of $C_i\times \left(G_i/N_i\right)
  \times C_i$ for $i\in\{1, \ldots, r\}$. Suppose that $C_i = \{e_1 := f_i,
  \ldots, e_{|C_i|}\}$, $s_j\in S$ is any element such that $s_j^{-1}e_1s_j =
  e_j$ for every $j\in \{1, \ldots, |C_i|\}$, and $\{n_1, \ldots,
  n_{|G_i/N_i|}\}$ is a transversal of the cosets of $N_i$ in $G_i$.
  Then, by Green's Lemma, the
  representatives of $\rho$-classes corresponding to $C_i$ are given by
  \[
    \{ s_j^{-1} f_i n_k s_l : 1\leq j, l\leq |C_i|,\ 1\leq k\leq |G_i/N_i|\}.
  \]
  The elements $s_j\in S$ correspond to the products of the labels
  of the edges
  on any path from $e_1$ to $e_j$ in $\Gamma_X/\Tr(\rho)$.

  \begin{ex}
    Continuing \cref{ex-norm-subgroups}, the number of classes of the
    congruence $\rho$ is $4!\cdot1^2 + 2 \cdot 4 ^ 2 + 1 \cdot 1 ^ 2 = 57$.

    We consider the $\rho$-class representatives corresponding to the only
    non-trivial strongly connected component of $\Gamma_X/\Tr(\rho)$
    (see~\cref{fig-trace-2}) where $f = (1)(2)(3)$, $G$ is the symmetric group
    on the set $\{1, 2, 3\}$, and $N = \genset{(1\ 2\ 3)} \trianglelefteq G$.
    We choose the transversal of cosets of $N$ in $G$ to be $\{f, (1)(2\
    3)\}$. The elements $s_j\in S$ are:
    \begin{align*}
      s_1 = (1)(2)(3)(4) && s_2 = (1\ 2\ 3\ 4) \\
      s_3 = (1\ 3)(2\ 4) && s_4 = (1\ 4\ 3\ 2).
    \end{align*}
    The representatives corresponding to the coset representative $f$
    are:
    \[
      \begin{array}{c|c|c|c|c}
        & s_1 & s_2 & s_3 & s_4 \\ \hline
        s_1
        & s_1 ^{-1} f^2 s_1 = f
        & s_1 ^{-1} f^2 s_2 = [1\ 2\ 3\ 4]
        & s_1^{-1} f^2 s_3 = [2\ 4](1\ 3)
        & s_1^{-1}f^2s_4 = [3\ 2\ 1\ 4]\\
        s_2
        & s_2 ^{-1}f^2 s_1 = [4\ 3\ 2\ 1]
        & s_2 ^{-1}f^2 s_2 = (2)(3)(4)
        & s_2 ^{-1}f^2 s_3 = [2\ 3\ 4\ 1]
        & s_2 ^{-1}f^2 s_4 = [3\ 1](2\ 4) \\
        s_3
        & s_3 ^{-1}f^2 s_1 = [4\ 2](1\ 3)
        & s_3 ^{-1}f^2 s_2 = [1\ 4\ 3\ 2]
        & s_3 ^{-1}f^2 s_3 = (1)(3)(4)
        & s_3 ^{-1}f^2 s_4 =  [3\ 4\ 1\ 2] \\
        s_4
        & s_4 ^{-1}f^2 s_1 = [4\ 1\ 2\ 3]
        & s_4 ^{-1}f^2 s_2 = [1\ 3](2\ 4)
        & s_4 ^{-1}f^2 s_3 = [2\ 1\ 4\ 3]
        & s_4 ^{-1}f^2 s_4 =  (1)(2)(4)
      \end{array}
    \]
    and for the coset representative $n := (1)(2\ 3)$:
    \[
      \begin{array}{c|c|c|c|c}
        & s_1 & s_2 & s_3 & s_4 \\ \hline
        s_1
        & s_1 ^{-1} fn s_1 = (1)(2\ 3)
        & s_1 ^{-1} fn s_2 =[1\ 2\ 4](3)
        & s_1^{-1} fn s_3 =[2\ 1\ 3\ 4]
        & s_1^{-1}fns_4 = [3\ 1\ 4](2)\\
        s_2
        & s_2 ^{-1}fn s_1 =[4\ 2\ 1](3)
        & s_2 ^{-1}fn s_2 =(2)(3\ 4)
        & s_2 ^{-1}fn s_3 = [2\ 3\ 1](4)
        & s_2 ^{-1}fn s_4 = [3\ 2\ 4\ 1]\\
        s_3
        & s_3 ^{-1}fn s_1 =[4\ 3\ 1\ 2]
        & s_3 ^{-1}fn s_2 =[1\ 3\ 2](4)
        & s_3 ^{-1}fn s_3 =(1\ 4)(3)
        & s_3 ^{-1}fn s_4 = [3\ 4\ 2](1)\\
        s_4
        & s_4 ^{-1}fn s_1 =[4\ 1\ 3](2)
        & s_4 ^{-1}fn s_2 =[1\ 4\ 2\ 3]
        & s_4 ^{-1}fn s_3 =[2\ 4\ 3](1)
        & s_4 ^{-1}fn s_4 = (1\ 2)(4).
      \end{array}
    \]
  \end{ex}

  %%%%%%%%%%%%%%%%%%%%%%%%%%%%%%%%%%%%%%%%%%%%%%%%%%%%%%%%%%%%%%%%%%%%%%%%
  \section{Computing a class of a congruence}\label{section-compute-the-kernel}
  %%%%%%%%%%%%%%%%%%%%%%%%%%%%%%%%%%%%%%%%%%%%%%%%%%%%%%%%%%%%%%%%%%%%%%%%

  In this section we consider the question of how to enumerate the elements of
  $x/\rho$ for an arbitrary $x\in S$. This provides a means of enumerating the
  elements of the kernel $\Ker(\rho)$ of the congruence $\rho$: find
  representatives $e_1, \ldots, e_{k}\in E(S)$ of the congruences classes of
  $\Tr(\rho)$, and then apply the algorithm in this section to determine the
  elements of $e_i/\rho$ for every $i$.

  Throughout this section we fix $x\in S$ with the aim of enumerating $x/\rho$.
  We require the following definitions, which are the key ingredients in this
  section:
  \[
    U_x = \bigcup D_{x / \rho}^{S / \rho} = \set{y\in S}{(y/\rho,
    x/\rho)\in \D^{S/\rho}},
  \]
  \(\mu, \nu \colon S \to E(S)\) defined by
  \[
    (y) \mu = \min((yy^{-1})/\Tr(\rho)) \quad\text{ and }\quad (y)\nu = \min
    ((y^{-1}y)/ \Tr(\rho)),
  \]
  and, finally, $\phi_x \colon U_x \to S$ defined by
  \[
    (y)\phi_x = (y) \mu \cdot y \cdot (y) \nu.
  \]

  The following lemma collects various properties of \(U_x\) and \(\phi_x\) that
  are used repeatedly throughout this section.

  \begin{lem}\label{lem-phi-1}
    \begin{enumerate}[\rm(a)]
      \item \label{item-94}
        If $y\in S$, then $(y)\mu = (yy^{-1})\mu$;
      \item \label{item-96}
        If $y, z\in S$ and $y=_\rho z$, then $(y)\mu = (z)\mu$;
      \item \label{part-a}
        If $y, z\in U_x$ and \(y=_\rho z\), then \(y=_\rho
        (y)\phi_x=_\rho (z)\phi_x\);
      \item \label{part-h}
        If $y, z\in U_x$ are such that $(y)\mu =_{\rho}(z)\mu$,
        then $(y)\mu = (z)\mu$. Similarly,
        if $(y)\nu =_{\rho}(z)\nu$, then $(y)\nu = (z)\nu$;
      \item \label{part-b}
        $\phi_x \circ \phi_x = \phi_x$ and, in particular,
        $\im(\phi_x)\subseteq U_x$;
      \item \label{part-c}
        If $y, z\in U_x$, then $(y/\rho, z/\rho)\in \D ^ {S/\rho}$;
      \item \label{part-d}
        If $y\in U_x$ and $(y, z)\in \D^S$, then $z \in U_x$;
      \item \label{part-e}
        If $y\in U_x$, then $(y)\phi_x \leq y$;
      \item \label{part-f}
        If $y, z\in U_x$ and $y\leq z$, then $y=_\rho z$;
      \item \label{item-95}
        If $y, z\in U_x$ and $y\leq z$, then $(y)\mu = (z)\mu$;
      \item \label{part-g}
        If $y\in U_x$, then $(y)\phi_x\cdot ((y)\phi_x)^{-1} = (y)\mu$ and
        $((y)\phi_x)^{-1}\cdot (y)\phi_x = (y)\nu$;
      \item \label{part-i}
        If \(y, z, yz \in U_x\) then \((y)\phi_x \cdot (z)\phi_x =
        (y)\phi_x z\).
    \end{enumerate}
  \end{lem}
  \begin{proof}
    \begin{enumerate}[wide,
        label=(\alph*),leftmargin=*,labelindent=0pt,itemsep=0.4em]
      \item We have
        \(\min((yy^{-1})/\Tr(\rho))=\min((yy^{-1})(yy^{-1})^{-1}/\Tr(\rho))\).
      \item We
        have\((y)\mu=\min((yy^{-1})/\Tr(\rho))=\min((yy^{-1})/\rho)=\min((y/\rho)(y/\rho)^{-1})\)
        so depends only on the \(\rho\) class of \(y\).
      \item

        Suppose that $y=_\rho z$. Then by definition $(y)\phi_x = (y)\mu \cdot y
        \cdot (y)\nu$. Since $(y)\mu=_\rho yy^{-1}$ and $ (y)\nu=_\rho y^{-1}y$,
        it follows that $(y)\phi_x=(y)\mu \cdot y \cdot (y)\nu=_\rho
        yy^{-1}\cdot
        y \cdot y^{-1}y = y$

        Similarly \((z)\phi_x=_\rho z\), hence \((z)\phi_x=_\rho z =_\rho
        y =_\rho (y)\phi_x\) as required.

      \item
        By definition both $(y)\mu$ and $(z)\mu$ are the minimum elements in
        their trace classes. Since $(y)\mu =_{\rho}(z)\mu$, these trace classes
        coincide, and so $(y)\mu =  (z)\mu$. The proof for $\nu$ is the
        same.

      \item  Let $y\in \im(\phi_x)$. Then there exists $z\in U_x$ such that
        $(z)\phi_x = y$. By part \ref{part-a}, $(z)\phi_x=_\rho z$
        and so $y=(z)\phi_x
        =_\rho z$. In particular, $y/\rho = z/\rho$ and so $(y/\rho, x/\rho) =
        (z/\rho, x/\rho) \in \mathscr{D}^{S/\rho}$, since $z\in U_x$. This shows
        that $y\in U_x$.

        By part \ref{part-h}, $y=_\rho z$ implies that $(y)\mu = (z)\mu$ and
        $(y)\nu=(z)\nu$. In particular, since $(y)\mu=(z)\mu$ and $(y)\nu=
        (z)\nu$ are idempotents, $(z)\mu = (y)\mu\cdot (z)\mu$ and $(z)\nu =
        (z)\nu\cdot (y)\nu$. Hence
        \begin{align*}
          y=(z)\phi_x& =(z)\mu\cdot z \cdot (z)\nu \\
          & = (y)\mu\cdot (z)\mu \cdot z \cdot (z)\nu \cdot (y)\nu \\
          & = (y)\mu \cdot (z)\phi_x \cdot (y)\nu \\
          & = (y)\mu \cdot y \cdot (y)\nu \\
          & = (y)\phi_x.
        \end{align*}

      \item
        If $y, z\in U_x$, then $(y/\rho, x/\rho), (z/\rho, x/\rho)\in \D
        ^{S/\rho}$ by definition. Thus, since $\rho$ is transitive, $(y/\rho,
        z/\rho)\in \D ^{S/\rho}$.

      \item
        If $(y, z)\in \D ^{S}$, then $(y/\rho, z/\rho)\in \D ^ {S/\rho}$. Since
        $y\in U_x$, $(y/\rho, x/\rho)$, and so, again by the transitivity of
        $\rho$, $(z/\rho, x/\rho)\in \D ^ {S/\rho}$. Therefore $z\in U_x$.

      \item
        This follows immediately from the definition of $\phi_x$, since
        $(z)\phi_x = (z)\mu\cdot z\cdot (z)\nu \leq z$.

      \item
        Since $y\leq z$, it follows that $x/\rho \leq y/\rho$. But by assumption
        $y, z\in U_x$ and so $(y/\rho, z/\rho)\in \D ^ {S/\rho}$, by
        part \ref{part-c}.
        Therefore $y/\rho =z/\rho$, as required.
      \item This follows from part \ref{item-96} and part \ref{part-f}.
      \item
        By part \ref{part-a}, $(y)\phi_x((y)\phi_x)^{-1}=_{\rho} yy^{-1}$ and so
        $(y)\phi_x((y)\phi_x)^{-1} \geq (y)\mu$, since $(y)\mu$ is
        the minimum of
        its trace class. On the other hand, $(y)\mu\cdot (y)\phi_x = (y)\phi_x$
        and so $(y)\mu (y)\phi_x((y)\phi_x)^{-1} = (y)\phi_x((y)\phi_x)^{-1}$.
        Therefore $(y)\phi_x((y)\phi_x)^{-1} \leq (y)\mu$.
        The proof for $\nu$ follows by symmetry.

      \item
        By part \ref{part-e}, \((y)\phi_x \cdot (z)\phi_x \leq
        (y)\phi_x \cdot z\).
        Part \ref{item-95}  then implies that
        $((y)\phi_xz)\mu = ((y)\phi_x(z)\phi_x)\mu$.
        From the definition of \(\phi_x\), we have \((y)\phi_x=(y)\mu
        (y)\phi_x\).
        By part \ref{item-94}
        $((y)\mu(y)\phi_xz)\mu=((y)\mu(y)\phi_xzz^{-1}((y)\phi_x)^{-1}(y)\mu)\mu$.
        By part \ref{item-95} this is in turn equal to
        \(((y)\mu)\mu\) which from the definition of \(\mu\) is equal
        to \((y)\mu\). By the same argument, \((y)\mu=((y)\phi_x(z)\phi_x)\mu\).
        So $(y)\mu=((y)\phi_xz)\mu = ((y)\phi_x(z)\phi_x)\mu$. Thus
        from the definition of \(\mu\), \((y)\mu\) is below each of
        \((y)\phi_xz((y)\phi_xz)^{-1}\) and
        \(((y)\phi_x(z)\phi_x)((y)\phi_x(z)\phi_x)^{-1}\). From the
        definition of \(\phi_x\), it thus follows that
        \[(y)\phi_xz((y)\phi_xz)^{-1}=((y)\phi_x(z)\phi_x)((y)\phi_x(z)\phi_x)^{-1}=(y)\mu.\]
        As \((y)\phi_x \cdot (z)\phi_x \leq (y)\phi_x \cdot z\),
        \[(y)\phi_x \cdot
          (z)\phi_x=((y)\phi_x(z)\phi_x)((y)\phi_x(z)\phi_x)^{-1}
          (y)\phi_x \cdot z=(y)\phi_xz((y)\phi_xz)^{-1}(y)\phi_x
          \cdot z=(y)\phi_x \cdot z.
        \qedhere\]
    \end{enumerate}
  \end{proof}

  For the next lemma it will be convenient to use the languages of
  groupoids. The
  set $U_x$ naturally forms a groupoid with $\ast\colon U_x\times U_x \to U_x$
  defined by
  \[
    y \ast z = yz \quad \text{whenever } y, z, yz\in U_x \text{ and }
    yz\D ^S y \D ^ S z
  \]
  and where the inverse operation coincides with that on $S$. The connected
  components of $U_x$ are just the $\D$-classes of $S$; for further details see
  \cite[Section 3.1]{Lawson1998}.

  \begin{lem}
    \label{lem-functor}
    If $u\in U_x$, then $\phi_x|_{D_u^S}$ is a functor (or
    equivalently a groupoid morphism).
  \end{lem}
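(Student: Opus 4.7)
The plan is to verify the two functor axioms directly: first, that $\phi_x|_{D_u^S}$ sends identities of the groupoid $D_u^S$ (i.e., idempotents) to idempotents of $U_x$; and second, that it preserves composition, i.e., $(yz)\phi_x = (y)\phi_x \cdot (z)\phi_x$ whenever $y, z \in D_u^S$ are composable in the groupoid (equivalently, $y^{-1}y = zz^{-1}$). Preservation of inverses will then follow from standard groupoid algebra. For the identity axiom, I would observe that if $e \in E(S) \cap D_u^S$, then $ee^{-1} = e^{-1}e = e$, so $(e)\mu = (e)\nu = \min(e/\Tr(\rho))$; denoting this idempotent by $f$, the formula $(e)\phi_x = f \cdot e \cdot f$ collapses to $f$ since $f \leq e$ and idempotents commute.

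For composition, assume $y^{-1}y = zz^{-1}$. The Location Theorem (\cref{thm-product-location}) gives $yz \in D_u^S$ with $(y, yz) \in \R$ and $(yz, z) \in \L$, which forces $(yz)(yz)^{-1} = yy^{-1}$ and $(yz)^{-1}(yz) = z^{-1}z$, hence $(yz)\mu = (y)\mu$ and $(yz)\nu = (z)\nu$. At the same time $(y)\nu = (z)\mu$, since both are the minimum of the same trace class (the class of $y^{-1}y = zz^{-1}$). I would then prove $(yz)\phi_x = (y)\phi_x (z)\phi_x$ by verifying that both elements lie below $yz$ in the natural partial order and share the same $\R$- and $\L$-identities. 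The comparisons $(yz)\phi_x \leq yz$ and $(y)\phi_x (z)\phi_x \leq yz$ are immediate from \cref{lem-phi-1}(e) and from $(y)\phi_x \leq y$, $(z)\phi_x \leq z$ respectively. For the $\R$-identity, \cref{lem-phi-1}(g) yields $(yz)\phi_x \cdot ((yz)\phi_x)^{-1} = (yz)\mu = (y)\mu$; and a short calculation substituting $(z)\mu = (y)\nu$ and using $(y)\phi_x \cdot (y)\nu = (y)\phi_x$ shows $(y)\phi_x(z)\phi_x \cdot ((y)\phi_x(z)\phi_x)^{-1} = (y)\mu$ as well. The $\L$-identities match by the dual computation.

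The conclusion then follows from the elementary inverse-semigroup identity $a = aa^{-1} \cdot b \cdot a^{-1}a$ valid for any $a \leq b$: this shows that $a$ is determined by $b$ together with $aa^{-1}$ and $a^{-1}a$. Applying it to both $a = (yz)\phi_x$ and $a = (y)\phi_x(z)\phi_x$ with $b = yz$ gives the desired equality, finishing the verification of composition. The main bookkeeping obstacle I anticipate is the $\R$-identity computation for $(y)\phi_x(z)\phi_x$; the key collapse is that the compatibility $(y)\nu = (z)\mu$ converts the intermediate idempotent factor in the product into the right identity of $(y)\phi_x$, allowing the product to telescope cleanly.
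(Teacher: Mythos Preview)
Your argument is correct, but it takes a somewhat different route from the paper's. The paper proceeds by a direct computation: starting from $(y)\phi_x\cdot(z)\phi_x=(y)\mu\cdot y\cdot(y)\nu\cdot(z)\mu\cdot z\cdot(z)\nu$, it collapses the two middle idempotents to $(y)\nu$ (since $(y)\nu=(z)\mu$), then invokes \cref{lem-eliminate-idempotents} to strip that idempotent out entirely, arriving at $(y)\mu\cdot yz\cdot(z)\nu=(yz)\phi_x$. It does not separately verify the identity axiom.

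Your approach instead shows that both $(yz)\phi_x$ and $(y)\phi_x(z)\phi_x$ lie below $yz$ in the natural partial order and have the same left and right idempotents $(y)\mu$ and $(z)\nu$, then appeals to the uniqueness principle $a=aa^{-1}\cdot b\cdot a^{-1}a$ for $a\leq b$. This is perfectly valid and avoids \cref{lem-eliminate-idempotents}, trading that lemma for a general order-theoretic fact about inverse semigroups; the cost is slightly more bookkeeping in verifying that $(y)\phi_x(z)\phi_x$ has the correct left and right idempotents (your key observation that $(z)\mu=(y)\nu$ makes the middle factor absorb is exactly what makes this go through). The paper's computation is marginally shorter, but your version is arguably more conceptual and makes explicit why the equality holds rather than just computing it.
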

  \begin{proof}
    Suppose that $y, z\in D_u^S$ are such that $yz\in D_u^S$. It
    suffices to show
    that $(y)\phi_x\cdot (z)\phi_x = (yz)\phi_x$. Since $yz\D ^S u\D
    ^S y$, $y^{-1}y = z
    z ^ {-1}$ (by the Location \cref{thm-product-location}), and so $(z)\mu=
    (y)\nu$. It follows that
    \begin{align*}
      (y)\phi_x\cdot (z)\phi_x & = ((y)\mu \cdot y\cdot
      (y)\nu)((z)\mu \cdot z\cdot (z)\nu) \\
      & = (y)\mu \cdot y\cdot (y)\nu\cdot z\cdot (z)\nu &&
      \text{since }(y)\nu = (z)\mu\in E(S) \\
      & = (y)\mu \cdot yz\cdot (z)\nu && \text{by
      \cref{lem-eliminate-idempotents}}.
    \end{align*}
    Since $(yz)(yz) ^{-1} = yzz^{-1}y^{-1} = yy^{-1}yy^{-1} = yy
    ^{-1}$, it follows that
    $(y)\mu = (yz)\mu$ and similarly, $(z)\nu = (yz)\nu$. Therefore
    \[
      (y)\phi_x\cdot (z)\phi_x = (y)\mu \cdot yz\cdot (z)\nu  =
      (yz)\mu\cdot yz \cdot
      (yz)\nu = (yz)\phi_x.\qedhere
    \]
  \end{proof}

  \begin{lem}%[\textbf{$\im(\phi_x)$ consists of minimal elements}]
    \label{lem-im-phi-minimal}
    \begin{enumerate}[\rm(a)]
      \item If $y\in \im(\phi_x)$ and $z\in U_x$ is such that $z\leq
        y$, then $y = z$.

      \item If $e=_{\Tr(\rho)} f$, $e\in \im(\phi_x)$ is an
        idempotent, and $f\in
        U_x$ is also an idempotent, then $e\leq f$.
    \end{enumerate}
  \end{lem}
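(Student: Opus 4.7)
My plan is to prove both parts by exploiting Lemma \ref{lem-phi-1} in a fairly direct way; the key idea is that elements in $\im(\phi_x)$ behave as "canonical" minimal representatives modulo $\rho$ in the sense that their $\mu$- and $\nu$-values coincide (when applicable) with the element itself.

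\textbf{Part (a).} I would start from the hypothesis $z\leq y$ with $y\in \im(\phi_x)$ and $z\in U_x$. By \cref{lem-phi-1}(b), $(y)\phi_x=y$, i.e., $(y)\mu\cdot y\cdot (y)\nu = y$. By \cref{lem-phi-1}(f), $z=_\rho y$, and therefore $zz^{-1}=_\rho yy^{-1}$ and $z^{-1}z=_\rho y^{-1}y$. Then \cref{lem-phi-1}(h) forces $(z)\mu=(y)\mu$ and $(z)\nu=(y)\nu$. The crucial step is to exploit $z \leq y$ to write $z=zz^{-1}y$ and note that $(y)\mu=(z)\mu\leq zz^{-1}$, hence $(y)\mu\cdot zz^{-1}=(y)\mu$. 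Substituting,
\[
y = (y)\mu\cdot y\cdot (y)\nu = (y)\mu\cdot zz^{-1}\cdot y\cdot (y)\nu = (y)\mu\cdot z\cdot (y)\nu = (z)\phi_x.
\]
Since $(z)\phi_x\leq z$ by \cref{lem-phi-1}(e), this gives $y\leq z$, and combined with $z\leq y$ yields $y=z$.

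\textbf{Part (b).} I would observe that since $e$ is an idempotent, $ee^{-1}=e^{-1}e=e$, and so $(e)\mu=(e)\nu=\min(e/\Tr(\rho))$; call this element $m$. Since $e\in \im(\phi_x)$, \cref{lem-phi-1}(b) gives $e=(e)\phi_x=mem$. But $m\leq e$ (as $m$ is the minimum of its trace class containing $e$) implies $me=em=m$ and hence $mem=m$, so $e=m=\min(e/\Tr(\rho))$. Since $f\in e/\Tr(\rho)$ by assumption, it follows at once that $e\leq f$.

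Neither part looks likely to present a serious obstacle: the only subtle point is the identity $(y)\mu\cdot zz^{-1}=(y)\mu$ used in part (a), which relies on the minimality built into $\mu$ together with the $\rho$-equivalence $z=_\rho y$ provided by \cref{lem-phi-1}(f). Once that is in hand, the computation collapses, and part (b) is essentially a direct unfolding of what it means for an idempotent to lie in $\im(\phi_x)$.
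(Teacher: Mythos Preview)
Your proof is correct, and in fact somewhat cleaner than the paper's. Both arguments rest on \cref{lem-phi-1}, but they use it differently. For part~(a), the paper reduces to showing that $yy^{-1}$ is minimal in $U_x$: it first proves $yy^{-1}$ is the minimum of its trace class (obtaining~(b) as a byproduct), and then argues via $\mathscr{D}$-classes in $S/\rho$ that no idempotent of $U_x$ lies strictly below $yy^{-1}$. Your approach sidesteps the passage to idempotents and the quotient entirely: from $z=_\rho y$ (\cref{lem-phi-1}(f)) you get $(z)\mu=(y)\mu$ and $(z)\nu=(y)\nu$ via~(h), and then the single-line computation $y=(y)\phi_x=(y)\mu\,zz^{-1}y\,(y)\nu=(z)\phi_x\leq z$ finishes. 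This is more direct and makes the role of the ``canonical representative'' property $(y)\phi_x=y$ completely transparent. For part~(b) the two proofs are essentially the same idea (show $e$ is the minimum of its trace class), though you isolate it as a standalone argument rather than extracting it from the proof of~(a). Note also that your proof of~(b) does not use the hypothesis $f\in U_x$; this is not a defect, just a slight strengthening.
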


  \begin{proof}
    Suppose that $y\in \im(\phi_x)$ and $z\in U_x$ are such that $z\leq y$.
    If $z < y$, then $zz^{-1} < yy ^{-1}$. Hence it suffices to show
    that $yy^{-1}$ is
    minimal in $U_x$.

    %Then $yy ^{-1} \geq yz^{-1} \geq zz ^{-1}$.
    Since $y\in \im(\phi_x)$, \cref{lem-phi-1}(a) shows that
    $(y)\phi_x = y\in U_x$. We
    begin by showing that $yy^{-1}$ is the minimum in its trace
    class; this will establish
    part (b). By the definition of $\mu$, it suffices to show that $yy^{-1} =
    (u)\mu$. This follows from \cref{lem-phi-1}(g).

    By the definition of $\phi_x$, $y = (u)\mu\cdot u\cdot (u)\nu$ and so
    \[
      y y ^ {-1} = ((u)\mu\cdot u\cdot (u)\nu)((u)\mu\cdot u\cdot (u)\nu)^{-1}
      = (u)\mu\cdot u\cdot (u)\nu \cdot ((u)\nu)^{-1} \cdot u^{-1}
      \cdot ((u)\mu)^{-1}
      \leq (u)\mu \cdot (u)\mu ^ {-1} = (u)\mu,
    \]
    the last equality holds because $(u)\mu$ is an idempotent.

    For the converse inequality, by \cref{lem-phi-1}(a), $y=
    (u)\phi_x=_\rho u$, and so $yy^{-1}=_\rho uu^{-1}=_\rho(u)\mu$.
    So, $yy^{-1}=_\rho
    (u)\mu$, and since $(u)\mu$ is the minimum in its trace class, $yy^{-1}
    \geq (u)\mu$. We have shown that $yy^{-1} = (u)\mu$, meaning that
    $yy^{-1}$ is
    the minimum in its trace class.

    If $z\in U_x$ and $z \leq yy^{-1}$, then $z$ is an idempotent,
    and $z/\rho \leq
    yy^{-1}/\rho$ (homomorphisms preserve the natural partial order).
    Since $y, z\in
    U_x$, it follows that $(y/\rho, z/\rho)\in \D^{S/\rho}$, by
    \cref{lem-phi-1}(c).
    Since $(y, yy^{-1})\in \D^{S}$, and homomorphisms preserve Green's
    $\D$-relation, $(y/\rho, yy^{-1}/\rho)\in \D ^{S/\rho}$. Thus
    $(yy^{-1}/\rho,
    z/\rho)\in \D^{S/\rho}$ and $z/\rho \leq yy^{-1}/\rho$, which implies that
    $z/\rho = yy^{-1}/\rho$.  Hence $z = yy^{-1}$ by
    \cref{lem-phi-1}(h) and so
    $yy^{-1}$ is minimal in $U_x$, as required.
  \end{proof}

  \begin{lem}
    The set $\im(\phi_x)$ is a $\D^{S}$-class.
  \end{lem}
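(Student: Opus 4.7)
The plan is to break the statement into two claims: (i) $\im(\phi_x)$ is contained in a single $\mathscr{D}^S$-class of $S$, and (ii) $\im(\phi_x)$ is closed under $\mathscr{D}^S$.

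For (i), I would take $y_1, y_2 \in \im(\phi_x)$ and show $y_1 \mathscr{D}^S y_2$. By \cref{lem-phi-1}(b) and (g), $y_i \in U_x$ and $y_i y_i^{-1} = (y_i)\mu$, so it suffices to prove $(y_1)\mu \mathscr{D}^S (y_2)\mu$. By \cref{lem-phi-1}(c), $(y_1)\mu/\rho \mathscr{D}^{S/\rho} (y_2)\mu/\rho$; I would lift this relation in the inverse semigroup $S/\rho$ to an element $a \in S$ satisfying $aa^{-1} =_\rho (y_1)\mu$ and $a^{-1}a =_\rho (y_2)\mu$. The minimality of $(y_i)\mu$ in its $\Tr(\rho)$-class gives $(y_1)\mu \leq aa^{-1}$, $(y_2)\mu \leq a^{-1}a$, and—crucially—$(y_1)\mu \leq a(y_2)\mu a^{-1}$ (since the conjugate $a(y_2)\mu a^{-1}$ is $\Tr(\rho)$-equivalent to $(y_1)\mu$). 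A short calculation then shows $t := (y_1)\mu \cdot a \cdot (y_2)\mu$ has $tt^{-1} = (y_1)\mu$ and $t^{-1}t = (y_2)\mu$, so $(y_1)\mu \mathscr{D}^S (y_2)\mu$, and hence $y_1 \mathscr{D}^S y_2$.

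For (ii), given $y \in \im(\phi_x)$ and $z \mathscr{D}^S y$, \cref{lem-phi-1}(d) yields $z \in U_x$, and it suffices to show $(z)\mu = zz^{-1}$ and $(z)\nu = z^{-1}z$, since the definition of $\phi_x$ then gives $(z)\phi_x = z$. The main obstacle is the following \emph{transport of minimality}: if $e, f \in U_x$ are idempotents with $e \mathscr{D}^S f$ and $e$ is the minimum of its $\Tr(\rho)$-class, then so is $f$. To prove it I would pick $s \in S$ with $ss^{-1} = e$ and $s^{-1}s = f$, and take any idempotent $f' \leq f$ with $f' =_{\Tr(\rho)} f$. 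Then $sf's^{-1}$ is idempotent with $sf's^{-1} \leq ss^{-1} = e$ and $sf's^{-1} =_\rho sfs^{-1} = e$; minimality of $e$ forces $sf's^{-1} = e$, and conjugating by $s^{-1}$ (using $f' \leq f$) gives $f = ff'f = f'$.

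To finish (ii), I would apply the transport-of-minimality claim to $e := yy^{-1}$ (the minimum of its trace class because $y = (y)\phi_x$ combined with \cref{lem-phi-1}(g)) and $f := zz^{-1}$ (in $U_x$ since $zz^{-1} \mathscr{D}^S z$ and $z \in U_x$), yielding $zz^{-1} = (z)\mu$; the symmetric argument on right-hand idempotents gives $z^{-1}z = (z)\nu$. The principal difficulty is the transport-of-minimality claim; the remainder consists of careful bookkeeping with \cref{lem-phi-1} and the definition of $\phi_x$.
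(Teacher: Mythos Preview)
Your proposal is correct. Part~(i) is essentially the paper's $(\supseteq)$ direction: your element $t = (y_1)\mu \cdot a \cdot (y_2)\mu$ is precisely $(a)\phi_x$, since $aa^{-1} =_\rho (y_1)\mu$ and $a^{-1}a =_\rho (y_2)\mu$ force $(a)\mu = (y_1)\mu$ and $(a)\nu = (y_2)\mu$ by \cref{lem-phi-1}(h). The paper then quotes \cref{lem-phi-1}(g) to identify the left and right idempotents of $(a)\phi_x$, whereas you compute them directly via the inequality $(y_1)\mu \leq a(y_2)\mu a^{-1}$; the two arguments are the same in substance.

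Part~(ii) differs genuinely from the paper's $(\subseteq)$ direction. The paper passes without loss of generality to an inverse subsemigroup of some $I_n$ and argues by rank: from $(z)\phi_x \leq z$ and $\rank((z)\phi_x) = \rank(y) = \rank(z)$ (the first equality using that $(z)\phi_x \in \im(\phi_x) \subseteq D_y^S$, established in the already-proved $(\supseteq)$ direction) one gets $(z)\phi_x = z$. Your transport-of-minimality lemma is a clean algebraic substitute: it avoids the Wagner--Preston embedding entirely, does not depend on having proved part~(i) first, and in fact the hypothesis $e, f \in U_x$ is not used in its proof---it is a general statement about $\D^S$-related idempotents and normal congruences on $E(S)$. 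The paper's route is marginally shorter; yours is more intrinsic and would survive unchanged in an infinite setting where rank is unavailable.
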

  \begin{proof}
    Suppose that $y\in \im(\phi_x)$. We will show that $D_y^S = \im(\phi_x)$.

    ($\supseteq$) Suppose that $z\in \im(\phi_x)$. Then $y, z\in U_x$
    (\cref{lem-phi-1}(b)) and
    so $(y/\rho, z/\rho)\in \D^{S/\rho}$ (\cref{lem-phi-1}(c)). Hence
    there exists
    $s\in S$ such that $(s/\rho, y/\rho)\in \D^{S/\rho}$, $zz^{-1}
    =_{\rho} ss^{-1}$,
    and $yy^{-1}=_{\rho} s^{-1}s$. This implies that $s\in U_x$ and so
    $(s)\phi_x \in \im(\phi_x)$. We will show that $(s)\phi_x
    ((s)\phi_x)^{-1} =
    zz^{-1}$ and $((s)\phi_x)^{-1}(s)\phi_x = yy^{-1}$. It will
    follow from this
    that $(z, (s)\phi_x), ((s)\phi_x, y)\in \D ^ S$ implying that
    $(z, y)\in \D ^S$
    which will conclude the proof.

    By \cref{lem-phi-1}(a), $s=_\rho (s)\phi_x$ and so
    $(s)\phi_x((s)\phi_x)^{-1} =_{\rho} ss ^{-1} =_{\rho} zz ^{-1}$.
    Since $z\in \im(\phi_x) \subseteq U_x$, it follows from the
    definition of $U_x$
    that $zz ^ {-1}\in U_x$ also.
    \cref{lem-phi-1}(b) implies that $z = (z)\phi_x$ and since $\phi_x$
    is a functor (\cref{lem-functor}), $(zz^{-1})\phi_x= (z)\phi_x\cdot
    (z^{-1})\phi_x = (z)\phi_x\cdot ((z)\phi_x) ^{-1} = zz^{-1}$ (the
      second to last equality holds
    because functors preserve inverses). Hence $zz^{-1}\in \im(\phi_x)$, and
    similarly, $(s)\phi_x ((s)\phi_x)^{-1}\in \im(\phi_x)$. But
    $(s)\phi_x((s)\phi_x)^{-1} =_{\rho} zz ^{-1}$, and so
    \cref{lem-phi-1}(g,h) implies that $zz^{-1}=(s)\phi_x
    ((s)\phi_x)^{-1}$. By symmetry $yy ^{-1} =
    ((s)\phi_x)^{-1}(s)\phi_x$, as required.

    ($\subseteq$) If $z\in D_y^S$, then $z\in U_x = \dom(\phi_x)$, by
    \cref{lem-phi-1}(d). It
    follows that $(z)\phi_x\leq z$ (by \cref{lem-phi-1}(e)) and so
    (assuming without loss of
    generality that $S$ is an inverse semigroup of partial permutations)
    \[
      \rank((z)\phi_x) \leq \rank(z) = \rank(y) = \rank((y)\phi_x) =
      \rank((z)\phi_x),
    \]
    (the last equality holds since $\im(\phi_x)\subseteq D_y^S$).
    Hence $(z)\phi_x = z$ and so $z\in \im(\phi_x)$.
    %Let $y\in \im(\phi_x)$. Then $y = (y)\phi_x$,  by
    % \cref{lem-phi-1}(b), and so
    %$y = (y)\mu\cdot y\cdot (y)\nu =_{\rho} yy^{-1}yy^{-1}y = y$.
  \end{proof}
  If $y, z\in S$ and $(y, z)\in \D^S$, then in the following results we
  will denote the
  intersection of the $\R$-class $R_y^S$ of $y$ and the $\L$-class
  $L_z^S$ of $z$
  by $H_{y, z}$.

  We can finally state and prove the main result in this section
  which will allow
  us to compute the elements in the congruence class $x/\rho\cap H_{e, f}$ where
  $e, f\in E(S)$, as a translate of the preimage of a coset of a normal subgroup
  under the functor $\phi_x$.

  \begin{theorem}\label{thm-preimages}
    If $x\in S$ is arbitrary and $e, f\in E(S)$ are such that $(e,
    f)\in \D^S$ and $H_{e, f} \cap x/\rho\neq\varnothing$,
    then
    \[
      H_{e, f} \cap x/\rho = \left(\left(H_{(e)\phi_x, (e)\phi_x}\cap
      e/\rho\right)xs^{-1}\right)
      \phi_x|_{H_{e, e}}^{-1}\cdot s,
    \]
    for every $s\in S$ such that $s^{-1}es = f$.
  \end{theorem}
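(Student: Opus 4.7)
The plan is to reduce the set equality to a simpler equivalence about preimages under $\phi_x$. I would first observe that $s^{-1}es = f$ together with $(e,f) \in \D^S$ forces $ss^{-1} \geq e$ and $s^{-1}s \geq f$: embedding $S$ into a symmetric inverse monoid $I_n$, equality of the ranks of $e$, $f$, and $s^{-1}es$ forces $\dom(e) \subseteq \im(s) = \dom(ss^{-1})$, giving $e \leq ss^{-1}$, and symmetrically $f \leq s^{-1}s$. Consequently, right multiplication by $s$ is a bijection $H_{e,e}^S \to H_{e,f}^S$ with inverse right multiplication by $s^{-1}$, so $H_{e,f} \cap x/\rho = \{ys : y \in H_{e,e},\ ys =_\rho x\}$. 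Using $y(ss^{-1}) = y$ in one direction and $xs^{-1}s =_\rho x$ (which follows from $x^{-1}x =_{\Tr(\rho)} f \leq s^{-1}s$) in the other, the condition $ys =_\rho x$ is equivalent to $y =_\rho xs^{-1}$.

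Setting $e^* := (e)\phi_x$ (which coincides with $\min(e/\Tr(\rho))$ since $e$ is idempotent) and $W := H_{e^*, e^*}^S \cap e/\rho$, the theorem therefore reduces to the following equivalence for $y \in H_{e,e}$: $y =_\rho xs^{-1}$ if and only if $(y)\phi_x \in W \cdot xs^{-1}$. The backward direction is immediate from \cref{lem-phi-1}(a): if $(y)\phi_x = w \cdot xs^{-1}$ with $w \in W$, then $w =_\rho e =_\rho xx^{-1}$ yields $(y)\phi_x =_\rho xs^{-1}$, and $y =_\rho (y)\phi_x$.

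For the forward direction I would define $w := (y)\phi_x \cdot sx^{-1}$ and prove both $wxs^{-1} = (y)\phi_x$ and $w \in W$. The idempotent $g := s(x^{-1}x)s^{-1}$ belongs to $e/\Tr(\rho)$ (by normality of $\Tr(\rho)$ together with the computation $sfs^{-1} = (ss^{-1})e(ss^{-1}) = e$), hence $g \geq e^*$; combined with $(y)\phi_x \in H_{e^*, e^*}$ (a consequence of the functor property of $\phi_x|_{D_e^S}$ established in \cref{lem-functor}), this gives the absorption identity $(y)\phi_x \cdot g = (y)\phi_x$. This yields $wxs^{-1} = (y)\phi_x$ and hence $ww^{-1} = (y)\phi_x \cdot g \cdot ((y)\phi_x)^{-1} = e^*$, while $w =_\rho e$ follows from $(y)\phi_x =_\rho y =_\rho xs^{-1}$ and $xs^{-1}sx^{-1} =_\rho xx^{-1} =_\rho e$.

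The main obstacle is to verify $w^{-1}w = e^*$. Using that $((y)\phi_x)^{-1}(y)\phi_x = e^*$ in the group $H_{e^*, e^*}$, a direct computation reduces this to $x(s^{-1}e^*s)x^{-1} = e^*$. The crucial ingredient is the conjugation-invariance of trace-class minimums: the map $h \mapsto s^{-1}hs$ sends $e/\Tr(\rho)$ into $f/\Tr(\rho)$ by normality, and its restriction to $\{h \in e/\Tr(\rho) : h \leq ss^{-1}\}$ is an order isomorphism onto $\{h' \in f/\Tr(\rho) : h' \leq s^{-1}s\}$, with inverse $h' \mapsto sh's^{-1}$ (bijectivity from $s(s^{-1}hs)s^{-1} = (ss^{-1})h(ss^{-1}) = h$ for $h \leq ss^{-1}$). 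As an order isomorphism of finite meet-semilattices it preserves minimums, and since $e^* \leq e \leq ss^{-1}$, it carries $e^*$ to $\min(f/\Tr(\rho)) =: f^*$; that is, $s^{-1}e^*s = f^*$. The analogous argument for conjugation by $x^{-1}$, which gives an order isomorphism of $\{h \leq xx^{-1}\} \subseteq e/\Tr(\rho)$ onto $\{h' \leq x^{-1}x\} \subseteq f/\Tr(\rho)$, yields $xf^*x^{-1} = e^*$, and chaining these gives $x(s^{-1}e^*s)x^{-1} = xf^*x^{-1} = e^*$.
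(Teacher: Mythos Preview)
Your proof is correct, and the overall shape---reducing via the bijection $H_{e,e}\to H_{e,f}$ given by right multiplication by $s$, then analysing $w=(y)\phi_x\cdot sx^{-1}$---matches the paper's. The reverse inclusion and the computation $w=_\rho e$ are essentially the same in both.

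Where you genuinely diverge is in showing $w\in H_{e^*,e^*}$. The paper never computes $w^{-1}w$ directly. Instead it verifies that $sx^{-1}\in U_x$, establishes $(ts^{-1})\nu=(sx^{-1})\mu$ and $(ts^{-1})\mu=(sx^{-1})\nu=(e)\phi_x$, uses \cref{lem-eliminate-idempotents} to rewrite $(ts^{-1})\phi_x\cdot sx^{-1}$ as the product $(ts^{-1})\phi_x\cdot(sx^{-1})\phi_x$ of two $\phi_x$-images, and then appeals to the Location Theorem to place this product in $H_{(e)\phi_x,(e)\phi_x}$. Your route avoids all of this by isolating a standalone order-theoretic fact: conjugation $h\mapsto s^{-1}hs$ restricts to an order isomorphism $\{h\in e/\Tr(\rho):h\le ss^{-1}\}\to\{h'\in f/\Tr(\rho):h'\le s^{-1}s\}$, and since both global minima $e^*,f^*$ lie in these subsets, $s^{-1}e^*s=f^*$; the companion identity $xf^*x^{-1}=e^*$ then gives $w^{-1}w=x(s^{-1}e^*s)x^{-1}=e^*$ in one line. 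This is cleaner and more reusable than the paper's argument, and it sidesteps the need to show $sx^{-1}\in U_x$ or to invoke the Location Theorem. (One cosmetic point: you call the subsets ``finite meet-semilattices'', but a trace class need not be meet-closed in $E(S)$; all you actually use is that an order isomorphism of finite posets sends the minimum to the minimum, which is immediate.)
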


  Before giving the proof of \cref{thm-preimages} note that $H_{(e)\phi_x,
  (e)\phi_x}\cap e/\rho$ is a normal subgroup of $H_{(e)\phi_x, (e)\phi_x}$ by
  \cref{lem-trace-facts}(a). Hence $\left(H_{(e)\phi_x, (e)\phi_x}\cap
  e/\rho\right)xs^{-1}$ is a coset of a normal subgroup. (Although the
    representative of this coset is $(e)\phi_x xs^{-1}$ not necessarily
  $xs^{-1}$.)
  Since $H_{e, e}$ is a group, and $\phi_x$ is a functor
  (\cref{lem-functor}), it
  follows that $\phi_x|_{H_{e, e}}\colon H_{e, e} \to H_{(e)\phi_x, (e)\phi_x}$
  is a group homomorphism. % The ideas of the proof are visualised in
  % \cref{fig-enter-label}.

  \begin{proof}[Proof of \cref{thm-preimages}]
    Suppose that $s\in S$ is any element such that $s^{-1}es = f$
    (such an element
    exists because $(e, f)\in \D^S$). We start by noting that:
    \[
      fs^{-1}s = s^{-1}ess^{-1}s = s^{-1}es= f,
    \]
    which will be useful in both parts of the proof below.

    $(\subseteq)$
    Let $t\in H_{e, f}\cap x/\rho$. Since $t\in H_{e, f}$, $ts^{-1}s
    = t$ (Green's
    Lemma~\cite[Lemma 2.2.1]{Howie}) and so $ts^{-1}\in H_{e, e}$. In
    particular,
    $(ts^{-1}, e)\in \D^S$ and $e =_{\rho}xx^{-1} \in U_x$ and so $e
    \in U_x$. Thus
    $ts^{-1}\in U_{x}$ (\cref{lem-phi-1}(d)) and so it suffices to show that
    $(ts^{-1})\phi_x \in \left(H_{(e)\phi_x, (e)\phi_x}\cap
    e/\rho\right)xs^{-1}$.

    We start by showing that $(ts^{-1})\phi_x\cdot sx^{-1} =_{\rho} e$:
    \begin{align*}
      (ts^{-1})\phi_x \cdot sx^{-1}
      & =_{\rho} (ts^{-1})sx^{-1} && ts^{-1} =_{\rho} (ts^{-1})\phi_x
      \text{ by \cref{lem-phi-1}(a)}\\
      & =_{\rho} (xs^{-1})sx^{-1} && t=_{\rho} x \text{ by assumption}\\
      & =\phantom{_{\rho}} (xx^{-1}xs^{-1})sx^{-1}  \\
      & =_{\rho} (xfs^{-1}ss^{-1})sx^{-1} && fs^{-1}s =_{\rho} x^{-1}x\\
      & =\phantom{_{\rho}} xfs^{-1}sx^{-1}  \\
      & =\phantom{_{\rho}} xfx^{-1} && fs^{-1}s = f\\
      & =_{\rho} xx^{-1}xx^{-1} && f=_{\rho} x^{-1}x \\
      & =\phantom{_{\rho}} xx^{-1}\\
      & =_{\rho} e.
    \end{align*}
    It remains to show that $(ts^{-1})\phi_x \in H_{(e)\phi_x,
    (e)\phi_x}xs^{-1}$,
    or, by Green's Lemma, equivalently that $(ts^{-1})\phi_x sx^{-1}\in
    H_{(e)\phi_x, (e)\phi_x}$.

    In order to do this, we start by showing that $sx^ {-1}\in U_x$. Since
    $(sx^{-1}, sx^{-1}xs^{-1})\in \D^S$, it follows that $(sx^{-1}/\rho,
    sx^{-1}xs^{-1}/\rho)\in \D ^{S/\rho}$. But $sx^{-1}xs^{-1}=_{\rho}sfs^{-1}=
    ss^{-1}ess^{-1}=ess^{-1}=e=_{\rho}xx^{-1}$ and so $(sx^{-1}xs^{-1}/\rho,
    x/\rho)\in \D ^{S/\rho}$. By transitivity, $(sx^{-1}/\rho, x/\rho)\in \D
    ^{S/\rho}$ and so $sx^{-1}\in U_x$ (\cref{lem-phi-1}(d)). By the
    definitions of
    $\mu$  and $\nu$:
    \begin{equation}\label{eq-ts=sx}
      (ts^{-1})\nu =_{\rho} (ts^{-1})^{-1}ts^{-1} = st^{-1}ts
      =_{\rho} sx^{-1}xs^{-1}
      = sx^{-1}(sx^{-1})^{-1}=_{\rho} (sx^{-1})\mu
    \end{equation}
    so $(ts^{-1})\nu=(sx^{-1})\mu$ (\cref{lem-phi-1}\ref{part-h}). On
    the other hand,
    $ts^{-1}st^{-1}\leq tt^{-1}$ and $tt^{-1}\in U_x$ since $t=_{\rho}x$.
    If $ts^{-1}st^{-1}\in U_x$, then $(ts^{-1})\mu =
    ts^{-1}st^{-1}=_{\Tr(\rho)} tt^{-1}=(t)\mu$
    (\cref{lem-phi-1}(f)). To show that $ts^{-1}st^{-1}\in U_x$ it
    suffices to show that $ts^{-1}st^{-1}$ is
    $\rho$-related to an element of $U_x$: %  is there an easier way
    % of seeing this?
    \begin{align*}
      ts^{-1}st^{-1} & = ts^{-1}st^{-1}tt^{-1} \\
      & =_{\rho} ts^{-1}sx^{-1}xt^{-1} && t^{-1}t =_{\rho}x^{-1}x\\
      & =_{\rho} ts^{-1}sft^{-1} && x^{-1}x=_{\rho} f \\
      & = ts^{-1}ss^{-1}est^{-1} && s^{-1}es=f \\
      & = ts^{-1}est^{-1} \\
      & =_{\rho} tft^{-1} &&  s^{-1}es=f \\
      & =_{\rho} tx^{-1}xt^{-1} &&   x^{-1}x=_{\rho} f \\
      & =_{\rho} xx^{-1}xx^{-1} && t=_{\rho}x \\
      & =_{\rho} xx^{-1}\in U_x.
    \end{align*}
    Hence $(ts^{-1})\mu=(t)\mu$ (\cref{lem-phi-1}(h)). By the assumption at the
    start of the proof, $t=_{\rho} x$ and so $tt^{-1}=_{\rho}xx^{-1}$, and so
    $(t)\mu =(xx^{-1})\mu$. Since $e\in E(S)$, $(e)\phi_x = (e)\mu
    =_{\rho}(xx^{-1})\mu$, and again by \cref{lem-phi-1}(h), $(e)\mu =
    (xx^{-1})\mu$. We have shown that
    \begin{equation}\label{eq-ts}
      (ts^{-1})\mu = (e)\phi_x.
    \end{equation}
    By a similar argument,  $(sx^{-1})\nu = (x^{-1})\nu = (x)\mu = (e)\phi_x$.

    It follows that
    \begin{align*}
      (ts^{-1})\phi_x \cdot sx^{-1} & = (ts^{-1})\phi_x \cdot
      (ts^{-1})\nu\cdot sx^{-1} &&
      \text{by the definition of }\phi_x\\
      & = (ts^{-1})\phi_x \cdot (sx^{-1})\mu\cdot sx^{-1} &&\text{by
      }\eqref{eq-ts=sx}\\
      & = (ts^{-1})\phi_x \cdot (sx^{-1})\mu\cdot
      sx^{-1}\cdot(sx^{-1})\nu && \text{by \cref{lem-eliminate-idempotents}}\\
      & = (ts^{-1})\phi_x \cdot (sx^{-1})\phi_x&& sx^{-1}\in U_x.
    \end{align*}
    We set $a = (ts^{-1})\phi_x$ and $b =  (sx^{-1})\phi_x$. By
    \eqref{eq-ts=sx},
    $(ts^{-1})\nu = (sx^{-1})\mu$ and so by \cref{lem-phi-1}(g),
    $a^{-1}a = bb^{-1}$
    and so $(ab, a) \in \D^S$. The Location \cref{thm-product-location}
    then implies that $ab\in
    H_{aa^{-1}, b^{-1}b}$. But
    \begin{align*}
      aa^{-1} & = (ts^{-1})\phi_x ((ts^{-1})\phi_x) ^{-1} \\
      & = (ts^{-1})\mu && \text{by \cref{lem-phi-1}(g)}\\
      & = (e)\phi_x && \text{by } \eqref{eq-ts}.
    \end{align*}
    Similarly, $b^{-1}b = (sx^{-1})\nu = (e)\phi_x$. Whence
    $(ts^{-1})\phi_x \cdot
    sx^{-1} = (ts^{-1})\phi_x \cdot (sx^{-1})\phi_x = ab\in
    H_{aa^{-1}, b^{-1}b} =
    H_{(e)\phi_x, (e)\phi_x}$, as required.
    \medskip

    % g -> x
    % g_1 -> e
    % g_2 -> f
    % gamma -> s
    % s -> t

    $(\supseteq)$ Let \(t \in \left(\left(H_{(e)\phi_x, (e)\phi_x}\cap
    e/\rho\right)xs^{-1}\right) \phi_x|_{H_{e, e}}^{-1}\cdot s\) be
    arbitrary. We
    must show that $t\in H_{e, f}$ and $t\in x/\rho$.

    Since $t\in \dom(\phi_x|_{H_{e, e}})s$, there exists $h\in H_{e,
    e}$ such that
    $t = hs\in H_{e, e}s = H_{e, f}$.

    It remains to prove that $t=_{\rho} x$:
    \begin{align*}
      t & = hs && \text{where }h\in \left(\left(H_{(e)\phi_x,
      (e)\phi_x}\cap e/\rho\right)xs^{-1}\right)
      \phi_x|_{H_{e, e}}^{-1}\\
      & =_{\rho} (h)\phi_xs && \text{by \cref{lem-phi-1}(a)}\\
      & =_{\rho} exs^{-1}s && \text{by the choice of }h\\
      & = exx^{-1}xs^{-1}s\\
      & =_{\rho} exfs^{-1}s && f=_{\rho} x^{-1}x\\
      & =exf && fs^{-1}s = f\\
      & =_{\rho} xx^{-1}xx^{-1}x \\
      & =x,
    \end{align*}
    as required.
  \end{proof}

  The next lemma provides a relatively efficient means of checking
  whether or not the set $H_{e, f} \cap x/\rho$
  is empty.

  \begin{lem}\label{lem-easy-game}
    Suppose that $x\in S$ is arbitrary and that $e, f\in E(S)$ are
    such that $(e, f)\in \D^S$. If
    $H_{e, f} \cap x/\rho \neq \varnothing$, then $(e, xx ^{-1}), (f,
    x^{-1}x)\in \Tr(\rho)$.
  \end{lem}
  \begin{proof}
    Suppose that $y\in H_{e, f} \cap x/\rho$. Then $(y, e) \in \L^S$
    and $(y, f) \in
    \R ^S$, and so $y^{-1}y = e$ and $yy ^{-1} = f$. Since $(x, y)\in \rho$, it
    follows that $(xx^{-1}, yy^{-1}) = (xx^{-1}, f), (x^{-1}x,
    y^{-1}y) = (x^{-1}x,
    e) \in \Tr(\rho)$, as required.
  \end{proof}

  Clearly, the set $x/\rho$ is the union of the sets $H_{e, f}\cap
  x/\rho$ where $e, f\in E(S)$ and
  $H_{e, f}\cap x/\rho \not=\varnothing$. The contrapositive of
  \cref{lem-easy-game} implies that it suffices to consider those $e,
  f\in E(S)$ such that $(e, xx^{-1}), (f, x^{-1}x)\in Tr(\rho)$.

  The algorithm for iterating through the elements of the set
  $x/\rho$ is then:
  \begin{enumerate}[label=(X\arabic*)]
    \item\label{item-X1} determine the data structure for the semigroup $S$
      consisting of: the generating set $X$, the word graph $\Gamma_X$, the
      strongly connected components of $\Gamma_X$; and one group $\H$-class per
      strongly connected component of $\Gamma_X$ using the algorithms
      described in
      \cite[Section 5.6]{East2019aa};
    \item\label{item-X2} determine the data structure for the quotient $S/\rho$
      consisting of: the generating set $X$; the quotient word graph
      $\Gamma_X/\Tr(\rho)$; the strongly connected components of
      $\Gamma_X/\Tr(\rho)$; and the quotient groups $G/N$ using the algorithms
      described in \cref{section-compute-the-trace}
      and~\cref{section-compute-the-groups};
    \item\label{item-X3} for every pair $\{e, f\}$ of idempotents where
      $(e,xx^{-1}), (f, x^{-1}x) \in\Tr(\rho)$ and $e$ and $f$ belong
      to the same
      strongly connected component of $\Gamma_X$ determine the set $H_{e, f}\cap
      x/\rho$ using \cref{thm-preimages}.
  \end{enumerate}

  To summarise, we compute $H_{e, f} \cap x/\rho$ for every $e, f\in E(S)$
  satisfying the conditions of \ref{item-X3}. These conditions
  suffice because if
  $(e,xx^{-1})\not \in\Tr(\rho)$ or $(f, x^{-1}x) \not\in\Tr(\rho)$, then $H_{e,
  f}\cap x/\rho = \varnothing$ by the contrapositive of \cref{lem-easy-game}. On
  the other hand, $e$ and $f$ belong to the same strongly connected component of
  $\Gamma_X$ if and only if $e\D ^S f$. If $e$ and $f$ are not
  $\D^S$-related, then
  $H_{e, f}$ is empty and so $H_{e, f} \cap x/\rho$ is too. Thus the
  idempotents $\{e, f\}$ satisfying the
  conditions of \ref{item-X3} include all such sets such that $H_{e,
  f}\cap x/\rho \neq \varnothing$.

  It might be worth noting that in the case that $\{e, f\}$ satisfy
  the conditions of \ref{item-X3}, but $H_{e, f} \cap x/\rho =
  \varnothing$, then the set
  $\left(H_{(e)\phi_x, (e)\phi_x}\cap e/\rho\right)xs^{-1}$ (from
  \cref{thm-preimages}) has empty intersection with $\im(\phi_x) \cap
  H_{e, e}$ and so $(\left(H_{(e)\phi_x, (e)\phi_x}\cap
  e/\rho\right)xs^{-1})\phi_x|_{H_{e, e}} ^{-1}$ is empty.

  \begin{ex}
    We continue \cref{ex-norm-subgroups} by computing \(x/\rho\)
    where \(x=[1\ 2\
    4] (3)\in I_4\). Steps \ref{item-X1} and \ref{item-X2} were covered in
    \cref{ex-trace} and \cref{ex-norm-subgroups}, respectively. For step
    \ref{item-X3}, we iterate though all the pairs of idempotents \(e, f\) such
    that \[e=_\rho xx^{-1}=(1)(2)(3) \quad\text{ and }\quad f=_\rho
    x^{-1}x=(2)(3)(4).\] In this case, there is only one such pair when \(e =
    (1)(2)(3)\) and \(f = (2)(3)(4)\). We then compute
    \(\left(\left(H_{(e)\phi_x,
    (e)\phi_x}\cap e/\rho\right)xs^{-1}\right) \phi_x|_{H_{e, e}}^{-1}\cdot s\)
    where \(s\in I_4\) is any fixed element such that \(s^{-1}es=f\); such as
    \(s=[1\ 2\ 3\ 4]\). Since \(\Tr(\rho)\) equals \(\Delta_{D_x\cap
      E(S)}=\set{(d,
    d)}{d\in D_x\cap E(S)}\) when restricted to the idempotents of the
    \(\mathscr{D}\)-class of \(x\), it follows that \(\phi_x\) is the identity
    function. Thus \(\left(\left(H_{e, e}\cap e/\rho\right)xs^{-1}\right) \cdot
    s=\left(H_{e, e}\cap e/\rho\right)x \).  We calculated in the previous
    example that \(H_{e, e}\cap e/\rho\) is the alternating group on \(\{1, 2,
    3\}\), that is \{(1)(2)(3), (1\ 2\ 3), (1\ 3\ 2)\}. Translating this by
    \(x=[1\ 2\ 4] (3)\) gives
    \[
      x/\rho =  \{(1)(2)(3)\cdot[1\ 2\ 4] (3),
        (1\ 2\ 3)\cdot[1\ 2\ 4] (3), (1\ 3\
      2)\cdot[1\ 2\ 4] (3)\}=\{[1\ 2\ 4] (3), [1\ 4] (2\ 3), [1\ 3\ 4] (2)\}.
    \]
  \end{ex}

  %%%%%%%%%%%%%%%%%%%%%%%%%%%%%%%%%%%%%%%%%%%%%%%%%%%%%%%%%%%%%%%%%%%%%%%%
  \section{Testing membership}\label{section-membership}
  %%%%%%%%%%%%%%%%%%%%%%%%%%%%%%%%%%%%%%%%%%%%%%%%%%%%%%%%%%%%%%%%%%%%%%%%

  In this section we address how to test whether or not a pair $(a, b)\in S
  \times S$ belongs to the congruence $\rho$. It is well-known that $(a, b)\in
  \rho$ if and only if $(a^{-1}a, b^{-1}b)\in \Tr(\rho)$ and $ab ^ {-1}\in
  \Ker(\rho)$; see, for example, \cite[Theorem 5.3.3]{Howie}.
  \cref{lem-test-mem-trace} shows how to check whether or not $(a^{-1}a,
  b^{-1}b)$ belongs to $\Tr(\rho)$: factorise $a^{-1}a$ and $b^{-1}b$ as words
  $u$ and $v$ in the generators $X$ of $S$, and simply check whether or not the
  paths with source $1_S$ labelled by $u$ and $v$ lead to the same node. We can
  also find the elements of $\Ker(\rho)$ as described at the start of
  \cref{section-compute-the-kernel} and check whether or not $ab ^{-1}$ belongs
  to this set of elements. If $|\Ker(\rho)|$ is relatively small, then this
  approach may be satisfactory. However, in many examples (for
    example those from
  \cref{figure-kernel} in \cref{appendix-perf}), it appears that $|\Ker(\rho)|$
  is sufficiently large that this approach is not sufficiently performant.

  The next theorem establishes an alternative means of testing membership in
  $\Ker(\rho)$ which avoids computing the elements of $\Ker(\rho)$.

  \begin{theorem}\label{theorem-mem-test}
    If $S$ is an inverse semigroup, $x\in S$, and $\rho$ is a congruence on $S$,
    then $x\in \Ker(\rho)$ if and only if \((xx^{-1}, x^{-1}x) \in
    \Tr(\rho)\) and
    $(x)\phi_x \in \left(H_{(xx^{-1})\phi_x, (xx^{-1})\phi_x)}\cap
  xx^{-1}/\rho\right)x^2$.
\end{theorem}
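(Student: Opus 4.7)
The proof naturally splits into two parts: the trace condition is elementary, and the main equivalence follows from a direct application of \cref{thm-preimages}. The necessity of $(xx^{-1}, x^{-1}x) \in \Tr(\rho)$ is routine: if $x =_\rho e$ for some $e \in E(S)$, then $xx^{-1} =_\rho e =_\rho x^{-1}x$. From here I work under this trace hypothesis, write $\pi := (xx^{-1})\phi_x$, and note (using \cref{lem-phi-1}(h) and the trace hypothesis) that $\pi = (x^{-1}x)\phi_x = (x)\mu = (x)\nu$, so $(x)\phi_x = \pi x \pi \in H_{\pi, \pi}$ by \cref{lem-phi-1}(g), and that $N := H_{\pi, \pi} \cap xx^{-1}/\rho$ is a normal subgroup of $H_{\pi, \pi}$ by \cref{lem-trace-facts}(a).

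The central step is to apply \cref{thm-preimages} to the element $x$ with the choices $e = x^{-1}x$, $f = xx^{-1}$, and $s = x^{-1}$. A direct check shows $s^{-1}es = x \cdot x^{-1}x \cdot x^{-1} = xx^{-1} = f$, $(e, f) \in \mathscr{D}^S$ is standard, and the theorem's trace hypotheses coincide with our assumption. Since $xs^{-1} = x^2$ and $(e)\phi_x = \pi$, the theorem produces
\[
  H_{x^{-1}x,\,xx^{-1}} \cap x/\rho \;=\; \bigl((N \cdot x^2)\,\phi_x|_{H_{x^{-1}x,\,x^{-1}x}}^{-1}\bigr) \cdot x^{-1}.
\]
The converse direction of the main equivalence is then immediate: if $(x)\phi_x = h \cdot x^2$ for some $h \in N$, then $(x)\phi_x =_\rho x$ (by \cref{lem-phi-1}(a)) and $h =_\rho xx^{-1}$ give $x =_\rho xx^{-1} \cdot x^2 = x^2$, so $x/\rho$ is idempotent in the group $\mathscr{H}$-class $H^{S/\rho}_{xx^{-1}/\rho}$ and hence equals the identity $xx^{-1}/\rho$, giving $x \in \Ker(\rho)$.

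For the forward direction, $x \in \Ker(\rho)$ gives $x^{-1} =_\rho (xx^{-1})^{-1} = xx^{-1} =_\rho x$, and therefore $x^{-1} \in H_{x^{-1}x, xx^{-1}} \cap x/\rho$; substituting this into the displayed formula yields a preimage witness $m \in H_{x^{-1}x, x^{-1}x}$ with $m \cdot x^{-1} = x^{-1}$ and $(m)\phi_x \in N \cdot x^2$. A short calculation using that $x^{-1}x$ is the identity of $H_{x^{-1}x, x^{-1}x}$ forces $m = x^{-1}x$, and hence $\pi = (x^{-1}x)\phi_x \in N \cdot x^2$. The main obstacle, and the step I expect to require the most care, is the final translation of ``$\pi \in N \cdot x^2$'' into ``$(x)\phi_x = \pi x \pi \in N \cdot x^2$''; I plan to accomplish this using \cref{lem-eliminate-idempotents} together with the facts that $x^2 x^{-2}, x^{-2}x^2 \in xx^{-1}/\Tr(\rho)$ (so both dominate $\pi$) and the functoriality of $\phi_x$ on the $\mathscr{D}$-class of $x$ in $U_x$ (\cref{lem-functor}).
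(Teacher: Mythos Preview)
Your approach is correct in outline but differs from the paper's, and the plan for the flagged final step has a genuine gap.

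\textbf{Comparison with the paper.} The paper applies \cref{thm-preimages} with the more symmetric choice $e=f=xx^{-1}$ and $s=xx^{-1}$ (so $xs^{-1}=x^2x^{-1}$), obtaining
\[
  H_{xx^{-1},xx^{-1}}\cap x/\rho
  = \bigl((N\cdot x^2x^{-1})\,\phi_x|_{H_{xx^{-1},xx^{-1}}}^{-1}\bigr)\cdot xx^{-1},
\]
and then runs a single chain of equivalences: $x\in\Ker(\rho)\iff xx^{-1}\in x/\rho\cap H_{xx^{-1},xx^{-1}}\iff (xx^{-1})\phi_x\in N x^2x^{-1}$, and finally multiplies on the right by $x$ using \cref{lem-phi-1}(i) to get $(x)\phi_x\in Nx^2$. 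This avoids splitting into directions and sidesteps your obstacle entirely.

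\textbf{The gap in your final step.} Your appeal to \cref{lem-functor} is problematic: that lemma only gives functoriality on a single $\mathscr{D}^S$-class, and under the hypothesis $x\in\Ker(\rho)$ there is no reason for $x$ and $x^2$ (or $x$ and $\pi$) to be $\mathscr{D}^S$-related; for instance $x=[1\ 2]\in I_2$ with $\rho$ universal has $x^2=\varnothing\notin D_x^S$. Likewise \cref{lem-eliminate-idempotents} does not obviously bridge ``$\pi\in Nx^2$'' to ``$(x)\phi_x\in Nx^2$''.

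\textbf{A clean repair.} Once $x\in\Ker(\rho)$ is assumed, you get $(x)\phi_x\in N$ \emph{directly}: by \cref{lem-phi-1}(g) one has $(x)\phi_x\in H_{\pi,\pi}$, and $(x)\phi_x=_\rho x=_\rho xx^{-1}$ puts $(x)\phi_x$ in $xx^{-1}/\rho$. Your derivation of $\pi\in Nx^2$ then finishes the job, because $x^2x^{-2},x^{-2}x^2\in xx^{-1}/\Tr(\rho)$ (as you note) force $\pi x^2\in H_{\pi,\pi}$, so $Nx^2=N(\pi x^2)$ is a genuine left coset of $N$ in $H_{\pi,\pi}$; since it contains the identity $\pi$, it equals $N$. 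Hence $(x)\phi_x\in N=Nx^2$. (In fact this makes the detour through the displayed formula for the forward direction unnecessary: $\pi x^2=((x)\phi_x)^2\in N$ already shows $Nx^2=N$.)
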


\begin{proof}
  Suppose that $x\in S$ is arbitrary.
  If $(xx^{-1}, x^{-1}x)\in \Tr(\rho)$, then $x$, \(s=xx^{-1}\), and
  \(e=f=xx^{-1}\) satisfy the hypotheses of \cref{thm-preimages}, which then
  states:
  \begin{equation}\label{equation-complicated}
    H_{xx^{-1}, xx^{-1}} \cap x/\rho = \left(\left(H_{(xx^{-1})\phi_x,
    (xx^{-1})\phi_x}\cap xx^{-1}/\rho\right)x^2 x^{-1}\right)
    \phi_x|_{H_{xx^{-1},
    xx^{-1}}}^{-1} xx^{-1}.
  \end{equation}
  Conversely, if $x\in \Ker(\rho)$, then $(xx^{-1}, x^{-1}x)\in
  \Tr(\rho)$ by \ref{item-CP2} and so \eqref{equation-complicated} holds again.
  In particular, when proving either implication in the statement of
  the theorem:
  $(xx^{-1}, x^{-1}x)\in \Tr(\rho)$ and \eqref{equation-complicated} holds.

  If \(x \in \Ker(\rho)\), then \(x / \rho = x^{-1} / \rho\), and so
  \(xx^{-1} / \rho = (x / \rho) (x / \rho) = x / \rho\). Conversely, if
  \((x, xx^{-1}) \in \rho\), then \(x \in \Ker(\rho)\).
  Hence
  \begin{align*}
    x\in \Ker(\rho) &\iff (x, xx^{-1})\in\rho \\
    &\iff xx^{-1}\in x/\rho\\
    &\iff xx^{-1}\in x/\rho\cap H_{xx^{-1}, xx^{-1}}\\
    &\iff    xx^{-1}\in\left(\left(H_{(xx^{-1})\phi_x, (xx^{-1})\phi_x}\cap
    xx^{-1}/\rho\right)x^2x^{-1}\right)
    \phi_x|_{H_{xx^{-1},xx^{-1}}}^{-1} xx^{-1} \\
    &\iff    xx^{-1}\in\left(\left(H_{(xx^{-1})\phi_x, (xx^{-1})\phi_x}\cap
    xx^{-1}/\rho\right)x^2x^{-1}\right)
    \phi_x|_{H_{xx^{-1},xx^{-1}}}^{-1} \\
    &\iff    (xx^{-1})\phi_x\in\left(H_{(xx^{-1})\phi_x, (xx^{-1})\phi_x}\cap
    xx^{-1}/\rho\right)x^2x^{-1}\\
    &\iff    (xx^{-1}x)\phi_x\in\left(H_{(xx^{-1})\phi_x, (xx^{-1})\phi_x}\cap
    xx^{-1}/\rho\right)x^2x^{-1}x & \text{using \cref{lem-phi-1}(i)}\\
    &\iff    (x)\phi_x\in\left(H_{(xx^{-1})\phi_x, (xx^{-1})\phi_x}\cap
    xx^{-1}/\rho\right)x^2.\qedhere
  \end{align*}
\end{proof}

\cref{theorem-mem-test} reduces the problem of testing membership in
$\Ker(\rho)$ to that of checking membership in $\Tr(\rho)$ and in the
coset $\left(H_{(xx^{-1})\phi_x, (xx^{-1})\phi_x}\cap
xx^{-1}/\rho\right)x^2$ of the normal subgroup
$\left(H_{(xx^{-1})\phi_x, (xx^{-1})\phi_x}\cap
xx^{-1}/\rho\right)$ (again the representative of this coset is
$(xx^{-1})\phi_x x^2$ rather than $x^2$).

At this point, we have shown how to compute answers to the common questions
about a congruence of an inverse semigroup or monoid without explicitly
computing the kernel of the congruence.

%%%%%%%%%%%%%%%%%%%%%%%%%%%%%%%%%%%%%%%%%%%%%%%%%%%%%%%%%%%%%%%%%%%%%%%%
\section{Meets and joins}\label{section-meets-joins}
%%%%%%%%%%%%%%%%%%%%%%%%%%%%%%%%%%%%%%%%%%%%%%%%%%%%%%%%%%%%%%%%%%%%%%%%

In this section we briefly outline how to compute the meet or join of
congruences $\sigma$ and $\rho$ on an inverse semigroup $S$ using the data
structure from~\ref{item-Q1},~\ref{item-Q2},~\ref{item-Q3}, and~\ref{item-Q4}
for $\sigma$ and $\rho$.

Suppose that $S$ is a (not necessarily inverse) semigroup, that $X$ is a
generating set for $S$, and that $\rho$ is a congruence on $S$. Then $\rho$ is
uniquely determined by a word graph with nodes $S/\rho$ and edges $(s/\rho, x,
sx/\rho)$ for every $s/\rho \in S/\rho$ and every $x\in X$; see~\cite[Theorem
3.7 and Corollary 3.8]{Merkouri2023aa} for details. It is shown in
\cite[Section 6]{Merkouri2023aa} that a slightly modified version of the
Hopcroft-Karp Algorithm~\cite{Hopcroft1971aa}, for checking whether or not
two finite state automata recognise the same language, can be used to
determine the word graph of the join $\sigma \vee \rho$ of congruences $\sigma$
and $\rho$ on $S$.

The following lemma is required to prove that the following algorithm
for computing the join of two inverse semigroup congruences is valid.

\begin{lem}\label{lem-join-traces}
  Let \(S\) be an inverse semigroup and let \(\rho\) and \(\sigma\) be
  congruences on $S$. Then
  \(\Tr(\rho \vee \sigma) = \Tr(\rho) \vee \Tr(\sigma)\).
\end{lem}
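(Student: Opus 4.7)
The plan is to prove the two inclusions separately. For \(\Tr(\rho) \vee \Tr(\sigma) \subseteq \Tr(\rho \vee \sigma)\), I would simply observe that \(\Tr(\rho \vee \sigma)\) is a normal congruence on \(E(S)\) containing both \(\Tr(\rho)\) and \(\Tr(\sigma)\) (since \(\rho, \sigma \subseteq \rho \vee \sigma\) implies the analogous containments of traces). Hence it contains the least normal congruence on \(E(S)\) containing both, which is their join.

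For the reverse inclusion \(\Tr(\rho \vee \sigma) \subseteq \Tr(\rho) \vee \Tr(\sigma)\), take \((e, f) \in \Tr(\rho \vee \sigma)\) with \(e, f \in E(S)\). The join \(\rho \vee \sigma\) is the transitive closure of \(\rho \cup \sigma\), so there is a sequence \(e = z_0, z_1, \ldots, z_n = f\) in \(S\) with each consecutive pair lying in \(\rho\) or in \(\sigma\). The key step is to apply the map \(y \mapsto yy^{-1}\) to this sequence to obtain the sequence of idempotents \(e = z_0 z_0^{-1}, z_1 z_1^{-1}, \ldots, z_n z_n^{-1} = f\) (using \(ee^{-1} = e\) and \(ff^{-1} = f\) at the endpoints). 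Since any congruence on an inverse semigroup respects inversion (the inverse is uniquely characterised by the semigroup operation, so passes to quotients), \((z_i, z_{i+1}) \in \rho\) gives \((z_i^{-1}, z_{i+1}^{-1}) \in \rho\), and therefore \((z_i z_i^{-1}, z_{i+1} z_{i+1}^{-1}) \in \Tr(\rho)\); analogously for \(\sigma\). Concatenating shows \((e, f)\) is in the transitive closure of \(\Tr(\rho) \cup \Tr(\sigma)\), and hence in \(\Tr(\rho) \vee \Tr(\sigma)\).

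The main subtlety, and the step I would check most carefully, is to confirm that the join of two normal congruences on \(E(S)\) in the lattice of normal congruences actually coincides with their join as ordinary equivalence relations on \(E(S)\); this is what licenses the transitive-closure argument above. I would handle this by verifying directly that the equivalence-relation join \(\theta\) of \(\Tr(\rho)\) and \(\Tr(\sigma)\) is automatically normal: given \(s \in S\) and a zig-zag \(e = e_0, e_1, \ldots, e_n = f\) alternating through \(\Tr(\rho) \cup \Tr(\sigma)\), each \(s^{-1}e_i s\) is again idempotent, and consecutive pairs \((s^{-1}e_i s, s^{-1}e_{i+1}s)\) remain in \(\Tr(\rho)\) or \(\Tr(\sigma)\) by the normality of these two traces. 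Thus \(\theta\) is normal, and so equals the join of \(\Tr(\rho)\) and \(\Tr(\sigma)\) in the lattice of normal congruences on \(E(S)\), completing the argument.
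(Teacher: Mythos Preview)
Your proof is correct and takes a genuinely different route from the paper's. For the nontrivial inclusion $\Tr(\rho\vee\sigma)\subseteq\Tr(\rho)\vee\Tr(\sigma)$, the paper argues structurally via the maximum-congruence-with-given-trace construction: it cites \cite[Corollary III.2.1]{petrich_book} for the normality of $\Tr(\rho)\vee\Tr(\sigma)$, sets $\tau=(\Tr(\rho)\vee\Tr(\sigma))_{\max}$ using \cite[Proposition 5.3.4]{Howie}, and observes that $\rho\subseteq\Tr(\rho)_{\max}\subseteq\tau$ (and likewise for $\sigma$), whence $\rho\vee\sigma\subseteq\tau$ and so $\Tr(\rho\vee\sigma)\subseteq\Tr(\tau)=\Tr(\rho)\vee\Tr(\sigma)$. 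You instead do a direct element chase: take a zig-zag in $S$ witnessing $(e,f)\in\rho\vee\sigma$ and push it down to $E(S)$ via $y\mapsto yy^{-1}$, then verify by hand that the equivalence-relation join of two normal congruences on $E(S)$ is again normal (and hence a semilattice congruence, since for $g\in E(S)$ one has $g^{-1}eg=eg$, so normality subsumes compatibility with the product on $E(S)$). Your approach is more elementary and entirely self-contained, needing neither external citation; the paper's is cleaner once the $\upsilon_{\max}$ machinery is in hand and reflects the broader fact that trace is a complete lattice homomorphism from the congruences of $S$ onto the normal congruences of $E(S)$.
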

\begin{proof}
  Certainly,  \(\Tr(\rho \vee \sigma) \supseteq \Tr(\rho) \vee \Tr(\sigma)\).

  For the converse containment, it suffices to show that there exists a
  congruence $\tau\subseteq S \times S$ such that $\Tr(\tau) = \Tr(\rho)
  \vee \Tr(\sigma)$ and $\rho\vee \sigma \subseteq \tau$.
  By \cite[Proposition 5.3.4]{Howie}, if $\upsilon
  \subseteq E(S)\times E(S)$ is any normal congruence on $E(S)$, then
  the maximum
  congruence with trace equal to $\upsilon$ is
  \[
    \upsilon_{\max} = \{(a, b)\in S \times S\mid (a^{-1}ea, b^{-1}eb)\in
    \upsilon \text{ for all }e\in E(S)\}.
  \]
  Suppose that $\upsilon, \upsilon'\subseteq E(S)\times E(S)$ are normal
  congruences. Then it is routine to verify that
  \begin{equation}\label{eq-routine}
    \upsilon\subseteq \upsilon' \Rightarrow \upsilon_{\max}\subseteq
    \upsilon'_{\max}.
  \end{equation}

  Since $\Tr(\rho)$ and
  $\Tr(\sigma)$ are normal congruences, by~\cite[Corollary
  III.2.1]{petrich_book}
  their join $\Tr(\rho)\vee \Tr(\sigma)$ is normal also. Hence we may
  define
  \[
    \tau := \left(\Tr(\rho) \vee \Tr(\sigma)\right)_{\max}.
  \]
  By definition, $\Tr(\tau) = \Tr(\rho)\vee \Tr(\sigma)$. It remains
  to show that
  $\rho\vee \sigma \subseteq \tau$, for which it suffices to show that
  $\rho\subseteq \tau$ and $\sigma\subseteq \tau$. We prove the
  former, the proof
  of the latter is identical.

  Clearly $\Tr(\rho)\subseteq \Tr(\rho) \vee \Tr(\sigma)$, and so, by
  \eqref{eq-routine}, $\Tr(\rho)_{\max}\subseteq \left(\Tr(\rho) \vee
  \Tr(\sigma)\right)_{\max} = \tau$. By definition $\rho \subseteq
  \Tr(\rho)_{\max}$, and so $\rho \subseteq \Tr(\rho)_{\max} \subseteq \tau$, as
  required.
\end{proof}

Given \cref{lem-norm-cong-quot,lem-join-traces}, it is straightforward to
verify that if $\sigma$ and $\rho$ are congruences on an inverse semigroup $S$
generated by $X\subseteq S$ and represented by the data structure
from~\ref{item-Q1},~\ref{item-Q2},~\ref{item-Q3}, and~\ref{item-Q4}, then the
data structure for the join of $\sigma$ and $\rho$ can be obtained as follows:
\begin{enumerate}[label=(J\arabic*)]
  \item\label{item-J1}
    compute the word graph $\Gamma_X/\Tr(\sigma\vee \rho)$ using
    \cite[Algorithm 5]{Merkouri2023aa} (the Hopcroft-Karp
    Algorithm~\cite{Hopcroft1971aa});
  \item\label{item-J2}
    compute the strongly connected components of
    $\Gamma_X/\Tr(\sigma\vee \rho)$;
  \item\label{item-J3}
    compute the generating sets for one group $\H$-class per strongly connected
    component of $\Gamma_X/\Tr(\sigma\vee \rho)$ using~\ref{item-N1}
    and~\ref{item-N2}.
\end{enumerate}

The meet $\sigma \wedge \rho$ of congruences $\sigma$ and $\rho$ on an inverse
semigroup $S$ can be computed in similar way, where \ref{item-J1} is replaced
with the computation of the word graph $\Gamma_X/\Tr(\sigma \wedge \rho)$ using
\cite[Algorithm 6]{Merkouri2023aa}. Algorithm 6 in \cite{Merkouri2023aa} is a
slightly modified version of the standard algorithm from automata theory for
finding an automaton recognising the intersection of two regular languages.

%%%%%%%%%%%%%%%%%%%%%%%%%%%%%%%%%%%%%%%%%%%%%%%%%%%%%%%%%%%%%%%%%%%%%%%%
\section{The maximum idempotent-separating congruence}
\label{section-max-idempotent-sep}
%%%%%%%%%%%%%%%%%%%%%%%%%%%%%%%%%%%%%%%%%%%%%%%%%%%%%%%%%%%%%%%%%%%%%%%%

In this section we give a method for computing the maximum
idempotent-separating congruence on a finite inverse subsemigroup of a
symmetric inverse monoid. We achieve this using a description of the maximum
idempotent-separating congruence via centralisers. We begin with the definition
of a centraliser.

If $S$ is a semigroup and $A$ is a subset of $S$, then the \defn{centraliser}
of $A$ in $S$ is the set
\[
  C_S(A) = \set{ s\in S }{sa = as \text{ for all } a \in A}.
\]
Let \(\mu\) be the congruence defined by \(a=_\mu b\) if and only if \(a
e a^{-1} = b e b^{-1}\) for all \(e \in E(S)\).

\begin{lem}[cf. Section 5.2 in \cite{Lawson1998}]
  The congruence \(\mu\) is the maximum idempotent-separating
  congruence on \(S\).
\end{lem}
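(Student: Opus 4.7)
The plan is to verify that $\mu$ is a congruence on $S$, show it is idempotent-separating, and then show it contains every idempotent-separating congruence. Nothing here is deep; the main point is to use that conjugates of idempotents by arbitrary elements remain idempotents in an inverse semigroup.

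First I would check $\mu$ is a congruence. Reflexivity, symmetry and transitivity are immediate from the definition. For compatibility: if $a =_\mu b$ and $c \in S$, then for any $e \in E(S)$ the element $cec^{-1}$ lies in $E(S)$ (since conjugates of idempotents by arbitrary elements of an inverse semigroup are idempotents, via $(cec^{-1})^2 = c(ec^{-1}c)ec^{-1} = c(c^{-1}ce)ec^{-1} = cec^{-1}$ using commutativity of idempotents). Applying the definition of $\mu$ with this idempotent gives $a(cec^{-1})a^{-1} = b(cec^{-1})b^{-1}$, i.e., $(ac)e(ac)^{-1} = (bc)e(bc)^{-1}$, so $ac =_\mu bc$. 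For left compatibility, $(ca)e(ca)^{-1} = c(aea^{-1})c^{-1} = c(beb^{-1})c^{-1} = (cb)e(cb)^{-1}$ directly from $a =_\mu b$.

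Next I would verify that $\mu$ is idempotent-separating. Suppose $e, f \in E(S)$ with $e =_\mu f$. Applied with the idempotent $e$, the definition yields $e = eee^{-1} = fef^{-1} = fef$; applied with $f$, it yields $f = efe$. Then $ef = (fef)f = fef = e$ and $fe = (efe)e = efe = f$, so $e = ef = fe = f$ because idempotents commute in inverse semigroups.

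Finally I would show $\mu$ is maximal among idempotent-separating congruences. Let $\sigma$ be any idempotent-separating congruence and $(a, b) \in \sigma$. Given $e \in E(S)$, compatibility of $\sigma$ with the operation yields $aea^{-1} =_\sigma beb^{-1}$. By the computation in the first paragraph both $aea^{-1}$ and $beb^{-1}$ lie in $E(S)$, and since $\sigma$ separates idempotents we conclude $aea^{-1} = beb^{-1}$. As $e$ was arbitrary, $(a, b) \in \mu$, so $\sigma \subseteq \mu$.

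There is no real obstacle; the only step that deserves care is the verification that $aea^{-1} \in E(S)$ for arbitrary $a \in S$ and $e \in E(S)$, since this is what simultaneously powers the compatibility check and the maximality argument.
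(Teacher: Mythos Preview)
Your argument is correct and is exactly the standard proof; note, however, that the paper does not give its own proof of this lemma but simply records it with a reference to \cite[Section~5.2]{Lawson1998}, so there is nothing to compare against beyond saying your write-up matches the textbook treatment. One small point worth making explicit in your maximality step is that $a =_\sigma b$ forces $a^{-1} =_\sigma b^{-1}$ (since quotients of inverse semigroups are inverse and inverses are unique there), which you are tacitly using to get $aea^{-1} =_\sigma beb^{-1}$.
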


We have that $\Ker(\mu) = C_S(E(S))$ and since $\mu$ is the maximum
idempotent-separating congruence on an inverse semigroup $S$, $\Tr(\mu) =
\Delta_{E(S)}$. For the remainder of this section, we discuss how to compute
$C_S(E(S))$ when $S\leq I_n$.

If $S$ is an inverse semigroup of partial permutations of degree $n$, and
$X\subseteq \{1, \ldots, n\}$, then the \defn{(setwise) stabiliser} of $X$ with
respect to $S$ is
\[
  \Stab_S(X) = \set{g\in S}{(X)g = X}\leq S.
\]

\begin{prop}
  If $S\leq I_n$ is an inverse semigroup, then
  \[
    C(E(S)) = \bigcup_{e\in E(S)} \bigcap_{f\leq e}
    \Stab_{S\cap\operatorname{Sym}(\dom(e))}(\dom(f)),
  \]
  where \(\operatorname{Sym}(\dom(e))\) denotes the group of
  permutations of \(\dom(e)\), and \(f\) is taken to be in \(S\).
\end{prop}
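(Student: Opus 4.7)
The plan is to prove the two set containments separately by direct computation with partial permutations in $I_n$. The basic tool is the description of products in $I_n$: for any $s \in S$ and $e \in E(I_n)$ with $\dom(e) = X$, one has $\dom(es) = \dom(s) \cap X$ and $\dom(se) = \{x \in \dom(s) : (x)s \in X\}$, with both products agreeing with $s$ on their respective domains.

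For the $\subseteq$ direction, I would start with $s \in C_S(E(S))$ and set $e = ss^{-1} \in E(S)$. Commuting $s$ with the idempotent $s^{-1}s \in E(S)$ and comparing domains forces $\dom(s) \subseteq \im(s)$; since $s$ is a partial bijection, $|\dom(s)| = |\im(s)|$ and hence $\dom(s) = \im(s) = \dom(e)$, so $s$ lies in $S \cap \operatorname{Sym}(\dom(e))$. For any $f \in E(S)$ with $f \leq e$, we have $\dom(f) \subseteq \dom(e) = \dom(s)$, so $\dom(fs) = \dom(f)$ while $\dom(sf) = \{x \in \dom(e) : (x)s \in \dom(f)\}$. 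Equating the two forces $s$ to setwise stabilise $\dom(f)$, placing $s$ in the intersection indexed by this $e$.

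For the $\supseteq$ direction, suppose $s$ lies in the intersection for some $e \in E(S)$, so that $\dom(s) = \im(s) = \dom(e)$. Let $g \in E(S)$ be arbitrary and set $f = eg$. Since idempotents of an inverse semigroup commute and their products are again idempotents, $f \in E(S)$; moreover $f \leq e$ and $\dom(f) = \dom(g) \cap \dom(e)$, so $f$ appears in the intersection and $s$ setwise stabilises $\dom(f)$. Direct computation then gives $\dom(sg) = \{x \in \dom(e) : (x)s \in \dom(g)\} = \dom(f) = \dom(g) \cap \dom(e) = \dom(gs)$, and both $sg$ and $gs$ act as $s$ on this common domain, whence $sg = gs$. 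Since $g$ was arbitrary, $s \in C_S(E(S))$.

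The main subtlety is the $\supseteq$ direction: the hypothesis only constrains $s$ with respect to $f \leq e$, yet commutation with every $g \in E(S)$ is required, even when $\dom(g) \not\subseteq \dom(e)$. The bridge is to pass from $g$ to $f = eg$; because $s$ sends $\dom(e)$ to itself, the $sg$ versus $gs$ computation only ever sees $g$ through the overlap $\dom(g) \cap \dom(e) = \dom(f)$, which is exactly what the stabiliser hypothesis controls.
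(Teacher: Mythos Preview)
Your proof is correct and follows essentially the same approach as the paper: for $\subseteq$ you set $e=ss^{-1}$, show that $s$ permutes $\dom(e)$, and deduce the stabiliser condition from $fs=sf$; for $\supseteq$ you reduce commutation with an arbitrary $g\in E(S)$ to the stabiliser condition on $f=eg\leq e$. The only cosmetic differences are that the paper tracks images where you track domains, and it writes the final equality as the chain $gs=ges=fs=s|_{\dom(f)}=s|_{(\dom(f))s^{-1}}=sf=seg=sg$, but the underlying computations are the same.
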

\begin{proof}
  ($\subseteq$) Let \(s\in C(E(S))\) and \(e= ss^{-1}\). As
  \(se=es=s\) and \(\dom(e)=\dom(s)\) it follows that
  \[
    \dom(e)=\dom(s)=\dom(se)=(\dom(e))s^{-1}.
  \]
  Thus \(s^{-1}\) bijectively maps \(\dom(e)=\dom(s)\) to itself. So \(s\) does
  too, and so \(s\in S\cap\operatorname{Sym}(\dom(e))\). Let \(f\leq e\). To
  conclude that \(s\) belongs to the right side of the equality in the
  statement, it suffices to show that \((\dom(f))s=\dom(f)\). By assumption
  \(fs=sf\), so
  \[
    \dom(f)s=\im(fs)=\im(sf)=\im(s)\cap\dom(f)=\dom(e)\cap\dom(f)=\dom(f).
  \]

  ($\supseteq$)
  Let \(s\) be an element of the right hand side of the equality in the
  statement of the proposition. Then there exists \(e\in E(S)\) such that for
  all \(f\leq e\), we have \(s\in
  \Stab_{S\cap\operatorname{Sym}(\dom(e))}(\dom(f))\). In particular, this
  holds when \(f=e\), and so \(s\in S\cap\operatorname{Sym}(\dom(e))\). Let
  \(g\in E(S)\) be arbitrary. We need to show that \(s g=g s\). Since \(s\) is
  an element of a subgroup with identity \(e\), it follows that
  \(ss^{-1}=s^{-1}s=e\). If we define \(f= eg\), then as \(f \leq e\),
  \((\dom(f))s=\dom(f)\). This implies that \((\dom(f))s^{-1}=\dom(f)\) and so
  \[
    g s = g es = f s = s |_{\dom(f)} = s |_{\dom(f)s^{-1}} = sf =
    seg = sg. \qedhere
  \]
\end{proof}

If \(A\subseteq \mathcal{P}(X)\) for some set \(X\), then we say that \(A\) is
a \defn{boolean algebra} (on \(X\)) if \(A\) is closed under taking finite
(possibly empty) unions, and is also closed under taking complements in \(X\).
Each boolean algebra is partially ordered by \(\subseteq\) and contains the
empty set, which is called the \(0\) of the algebra. The complement of \(0\)
(the universal set) is similarly called the \(1\). Note that this is consistent
with standard meet semilattice notation. If \(Y \subseteq X\) we write \(Y^c\)
to denote the complement of \(Y\) in \(X\).  We say that an element of a
boolean algebra is an \defn{atom} if it is a minimal non-zero element. If \(B\)
is a boolean algebra, then we define \(A(B)\) to be the set of atoms of \(B\).
For any finite boolean algebra \(B\), \(B= \set{\cup Y}{Y\subseteq A(B)}\). If
\(S\leq I_n\) is an inverse semigroup, then we define \(B(S)\leq
\mathcal{P}(\{1, 2, \ldots, n\})\) to be the least boolean algebra containing
the set of domains (or equivalently images) of the elements of \(S\), noting
that such a boolean algebra exists as the intersection of two boolean algebras
is always a boolean algebra.

\begin{theorem}\label{prop-centraliser}
  If \(S\leq I_n\) is an inverse semigroup, then
  \[
    C(E(S))=\set{s\in S}{ (b)s=b\text{ for all }b\in A(B(S)) \text{
    such that }b\subseteq \dom(s)}.
  \]
\end{theorem}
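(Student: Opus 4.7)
The plan is to prove both containments directly, using the description of $C(E(S))$ via commutation with partial identities and the fact that the atoms of $B(S)$ partition (the relevant part of) $\{1, \ldots, n\}$. The key conceptual observation is that the atoms of $B(S)$ are precisely the equivalence classes of the relation $i \sim j$ defined by: for every $x \in S$, $i \in \dom(x) \Leftrightarrow j \in \dom(x)$ (together with their complements). This reformulation will let me translate ``preserves every atom it meets'' into ``preserves every domain''.

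For the containment $(\subseteq)$, let $s \in C(E(S))$ and let $b \in A(B(S))$ with $b \subseteq \dom(s)$. For every $x \in S$, the partial identity $e_{\dom(x)}$ on $\dom(x)$ belongs to $E(S)$ (it equals $x^{-1}x$), so $s$ commutes with each such $e_{\dom(x)}$. Comparing the domains of $s \cdot e_{\dom(x)}$ and $e_{\dom(x)} \cdot s$ gives that for every $i \in \dom(s)$ and every $x \in S$, one has $(i)s \in \dom(x) \Leftrightarrow i \in \dom(x)$. By the description of atoms above, this means $i$ and $(i)s$ lie in the same atom of $B(S)$. In particular, $(b)s \subseteq b$, and since $s$ is injective and $b$ is finite, $(b)s = b$.

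For the containment $(\supseteq)$, let $s \in S$ preserve every atom contained in $\dom(s)$, and let $g \in E(S)$. Write $D = \dom(g) \in B(S)$, so $D$ is a disjoint union of atoms. To show $sg = gs$, it suffices to check $\dom(sg) = \dom(gs)$, i.e.\ that for every $i \in \dom(s)$, $(i)s \in D \Leftrightarrow i \in D$. Let $b$ be the atom containing $i$. Since $\dom(s) \in B(S)$ and $b$ is an atom meeting $\dom(s)$ (at $i$), we have $b \subseteq \dom(s)$, and by hypothesis $(i)s \in (b)s = b$. Since $D$ is a union of atoms, $i \in D \Leftrightarrow b \subseteq D \Leftrightarrow (i)s \in D$, as required.

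The routine parts are bookkeeping with partial-permutation composition; the only subtle step is the reformulation of atoms as equivalence classes of ``membership in a domain,'' but this is immediate from the fact that $B(S)$ is the boolean algebra generated by the sets $\{\dom(x) : x \in S\}$, so I do not expect a serious obstacle. Both directions reduce cleanly to the claim that a partial bijection commutes with all partial identities on sets in $B(S)$ if and only if it preserves every atom of $B(S)$ that lies in its domain.
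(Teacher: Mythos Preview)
Your argument is correct. Two small cosmetic points: the idempotent with domain $\dom(x)$ is $xx^{-1}$ rather than $x^{-1}x$ (the latter has domain $\im(x)$), though this is immaterial since both lie in $E(S)$; and the parenthetical ``(together with their complements)'' in your description of the atoms is unnecessary and potentially confusing --- the atoms of $B(S)$ are exactly the nonempty $\sim$-classes, nothing more.

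For the inclusion $(\supseteq)$ your proof is essentially the same as the paper's: both write $\dom(g)\cap\dom(s)$ as a union of atoms contained in $\dom(s)$ and verify $sg=gs$ pointwise. For the inclusion $(\subseteq)$ the routes differ. You invoke the standard description of the atoms of a finitely generated boolean algebra as equivalence classes of the relation ``same membership pattern in the generators'', and then read off $i\sim (i)s$ directly from the equality of domains in $s\,e_{\dom(x)}=e_{\dom(x)}\,s$. The paper instead builds an auxiliary boolean algebra
\[
X=\{Y\subseteq\{1,\ldots,n\}: (Y)s\subseteq Y \text{ and } (Y^c)s\subseteq Y^c\},
\]
shows that every $\dom(t)$ lies in $X$, and concludes $B(S)\subseteq X$, whence $(b)s\subseteq b$ for every atom $b$. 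Your approach is more pointwise and avoids introducing $X$; the paper's approach is more structural and does not rely on the equivalence-class description of atoms. Both are short, and each makes transparent a slightly different aspect of why centralising $E(S)$ forces atom-preservation.
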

\begin{proof}
  \((\subseteq)\) Let \(s\in C(E(S))\).
  We must show that \((b)s = b\) for all \(b\in A(B(S))\) such that
  \(b\subseteq \dom(s)\). Let \(b\in A(B(S))\) be such that
  \(b\subseteq \dom(s)\) and let
  \begin{align*}
    X&=  \set{Y\subseteq \{1, \ldots, n\}}{(Y)s\subseteq Y\text{
    and }(Y^c)s\subseteq Y^c}\\
    X'&=\set{Y\subseteq \{1, \ldots, n\}}{(Y)s\subseteq Y\text{ and
    }(Y)s^{-1}\subseteq Y}.
  \end{align*}
  We show that \(X=X'\). Let \(Y \in X\). Then \((Y)s \subseteq Y\) and
  \((Y^c)s \subseteq Y^c\). So \(s\) moves nothing from \(Y\) to \(Y^c\) and
  nothing from \(Y^c\) to \(Y\), and thus the same must hold for \(s^{-1}\). In
  particular, \((Y)s^{-1} \subseteq Y\) and so \(Y \in X'\) and \(X \subseteq
  X'\). Now suppose \(Y \in X'\). Then \((Y)s \subseteq Y\) and \((Y)s^{-1}
  \subseteq Y\). The later implies that \(s\) cannot move anything from \(Y^c\)
  into \(Y\) and so \((Y^c)s \subseteq Y^c\) and \(Y \in X\). Thus \(X'
  \subseteq X\) and so \(X' = X\).

  Note that \(X\) is a boolean algebra, as from the definition of \(X\) it is
  closed under complements, and from the definition of \(X'\) it is closed
  under unions. Let \(D = \set{\dom(t)}{t\in S}\). By definition, the least
  boolean algebra containing \(D\) is \(B(S)\). We will show that \(D\subseteq
  X\). This will be sufficient because, together with the fact that \(X\) is a
  boolean algebra, this implies that \(B(S)\subseteq X\), which in turn implies
  that \((b)s\subseteq b\). Since \(b\) is an atom \((b)s\) cannot be a proper
  subset of \(b\).

  So let \(d\in D\) be arbitrary, and let \(f_d\in S\) be an idempotent with
  \(\dom(f_d)=d\). As \(s\in C(E(S))\), we have \(sf_d=f_ds\). The image of
  \(sf_d\) is \(d\cap \im(s)\) and the image of \(f_ds\) is \((d)s\). Thus
  \((d)s=d\cap \im(s)\) and so \((d)s\subseteq d\). Since \(s^{-1}\in
  C(E(S))\), we similarly get that \((d)s^{-1}\subseteq d\). It follows that
  \(d\in X\), as required.
  \medskip

  \((\supseteq)\) Let \(s\in S\) be such that for all \(b\in A(B(S))\) with
  \(b\subseteq \dom(s)\), we have \((b)s=b\). Let \(e\in S\) be an idempotent.
  We will show that \(se=es\). Let \(b_1, \ldots, b_k\in A(B(S))\) be distinct
  such that \(\dom(e)\cap \dom(s)= b_1\cup\ldots\cup b_k\). Note that, from the
  assumption on \(s\), \((b_i)s=b_i\) for all \(1\leq i\leq k\) so
  \[\dom(e)\cap \dom(s)= b_1\cup\ldots\cup b_k=\dom(e)\cap \im(s).\]

  For all \(x\in \{1, \ldots, n\}\) we have that
  \begin{align*}
    (\{x\})es & =\left\{
      \begin{array}{lr}
        \varnothing & \text{ if } x\notin b_1\cup\ldots\cup b_k\\
        \{(x)s\}& \text{ if } x\in b_1\cup\ldots\cup b_k\\
      \end{array}\right. \\
      & =(\{x\})se.
    \end{align*}
    Therefore, \(se=es\) and so \(s\in C(E(S))\), as required.
  \end{proof}

  \begin{ex}
    We compute the maximum idempotent-separating congruence \(\mu\) of the
    semigroup \(I_4\). The first step is to construct \(C(E(I_4))\) using
    \cref{prop-centraliser}. The set of domains of elements of \(I_4\) is just
    \(\mathcal{P}(I_4)\), and so \(B(I_4) = \mathcal P(\{1, 2, 3, 4\})\). It
    follows that \(A(B(I_4))\) is the set of singleton subsets of \(\{1, 2, 3,
    4\}\). From \cref{prop-centraliser}, it follows that
    \[
      C(E(I_4)) = \set{s \in I_4}{(i)s = i \text{ for all } i \in
      \{1, 2, 3, 4\} \text{ such that } i \in \dom(s)}.
    \]
    This implies that \(C(E(I_4))\) is precisely the set of elements of \(I_4\)
    which act as the identity on their domains, which is just \(E(I_4)\) and so
    \(\Ker(\mu) = C(E(I_4)) = E(I_4)\). Since \(\mu\) is idempotent-separating,
    we already know that \(\Tr(\mu) = \Delta_{E(S)}\), and so we have computed
    the kernel and trace for \(\mu\), which fully describes the congruence. In
    this case, the kernel and trace equal those of the trivial congruence
    \(\Delta_{S}\), and so \(\mu = \Delta_{S}\).
  \end{ex}

  \section*{Acknowledgements}
  The authors were supported by a Heilbronn Institute for Mathematical Research
  Small Grant during part of this work. The second named author was supported
  the Heilbronn Institute for Mathematical Research during this work. The
  authors would also like to thank the University of Manchester for hosting
  them during part of the work on this paper; and Reinis Cirpons for
  his assistance in producing the figures in \cref{appendix-perf}.

  \printbibliography

  \appendix

  \section{Performance of the algorithms}\label{appendix-perf}

  In this appendix we provide some empirical evidence for our earlier claims
  about the performance of the algorithms described in this paper; see
  \cref{figure-trace,figure-kernel,figure-nr-classes,figure-max-idem-sep}.
  Each point in these figures represents the mean of
  a number of trials of the relevant computation related to a congruence on an
  inverse semigroup consisting of partial permutations. The number of trials was
  chosen according to the run-time of each computation, with shorter run-times
  having a larger number of trials. Each time is the mean of between 5 runs and
  10,000 runs. The inverse semigroups were chosen at random with between $1$ and
  $n$ generators of degree $n$ for $n\in\{5, 6, \ldots, 9\}$. The congruences
  were given by between $1$ and $5$ generating pairs consisting of randomly
  chosen elements of the corresponding inverse semigroup. Although other samples
  might exhibit different behaviours, and the sample used here is not unbiased,
  the authors believe they do provide some indication of the relevant merits of
  the algorithms presented in this article.

  \cref{figure-trace} contains a comparison of a preliminary implementation of
  the algorithm from \cref{section-compute-the-trace} with the earlier
  implementation in \cite{Semigroups} described in \cite{Torpey}. It should be
  noted that the algorithm described in \cref{section-compute-the-trace} permits
  the computation of the trace of a congruence $\rho$ on an inverse semigroup
  without any computation of the kernel of $\rho$. The algorithm described
  in~\cite{Torpey} and implemented in~\cite{Semigroups} computes the
  trace at the
  same time as computing the kernel. Estimating the complexity from
  \cref{figure-trace} the existing algorithm from~\cite{Semigroups,Torpey} has
  complexity approximately $O(|S|)$ where the algorithm based on
  \cref{section-compute-the-trace} has complexity $O(|S|^{0.31})$.

  \cref{figure-kernel} contains a comparison of a preliminary implementation of
  an algorithm (based on \cref{section-compute-the-kernel}) for computing the
  kernel of a congruence $\rho$ on an inverse semigroup $S$. This algorithm is
  rather simplistic, it computes representatives $e_1, \ldots, e_k\in
  S$ of trace
  classes (from the word graph $\Gamma_X/\Tr(\rho)$
  from~\ref{item-T2}), and then
  applies~\ref{item-X3} and \cref{thm-preimages} to compute the elements of the
  class $e_i/\rho$ for every $i$. Estimating the complexity from
  \cref{figure-kernel} the existing algorithm from~\cite{Semigroups,Torpey} has
  complexity approximately $O(|S|)$ where the algorithm based on
  \cref{section-compute-the-kernel} has complexity $O(|S|^{0.49})$. We reiterate
  the point (made several times earlier in this article) that computing the
  kernel is not required to answer most questions about congruences on inverse
  semigroups, although it might be interesting in its own right.

  \cref{figure-nr-classes} contains a comparison of the run-times of the
  following for computing the number of classes of a congruence $\rho$ on an
  inverse semigroup $S$: an implementation of the algorithm
  from~\cref{section-compute-the-trace} (specifically
  \eqref{equation-nr-classes});
  the earlier implementation in \cite{Semigroups} described in
  \cite{Torpey}; and
  the implementation in \cite{libsemigroups} and \cite{Semigroups} for computing
  a congruence on a (not necessarily inverse) semigroup. The latter
  does not make
  use of the fact that the input semigroups are inverse.
  Estimating the complexity from \cref{figure-nr-classes}
  the existing algorithm from~\cite{Semigroups,Torpey} has complexity
  approximately $O(|S|^{1.05})$; the algorithm based on
  \cref{section-compute-the-kernel} has complexity approximately
  $O(|S|^{0.25})$;
  and the generic method from \cite{libsemigroups, Semigroups} has complexity
  approximately $O(|S|^{1.34})$.

  \cref{figure-max-idem-sep} contains run-times of an implementation of the
  algorithm described in~\cref{section-max-idempotent-sep}. The inverse
  semigroups used to produce \cref{figure-max-idem-sep} were generated as
  described above. The authors of this paper are not aware of any existing
  algorithms in the literature for computing the maximum idempotent separating
  congruence of an inverse semigroup, and as such there is no comparison in
  \cref{figure-max-idem-sep}. Estimating the complexity from
  \cref{figure-max-idem-sep} the algorithm based on
  \cref{section-max-idempotent-sep} has complexity $O(|S|^{0.48})$.

  \begin{figure}
    \includegraphics[width=0.7\textwidth]{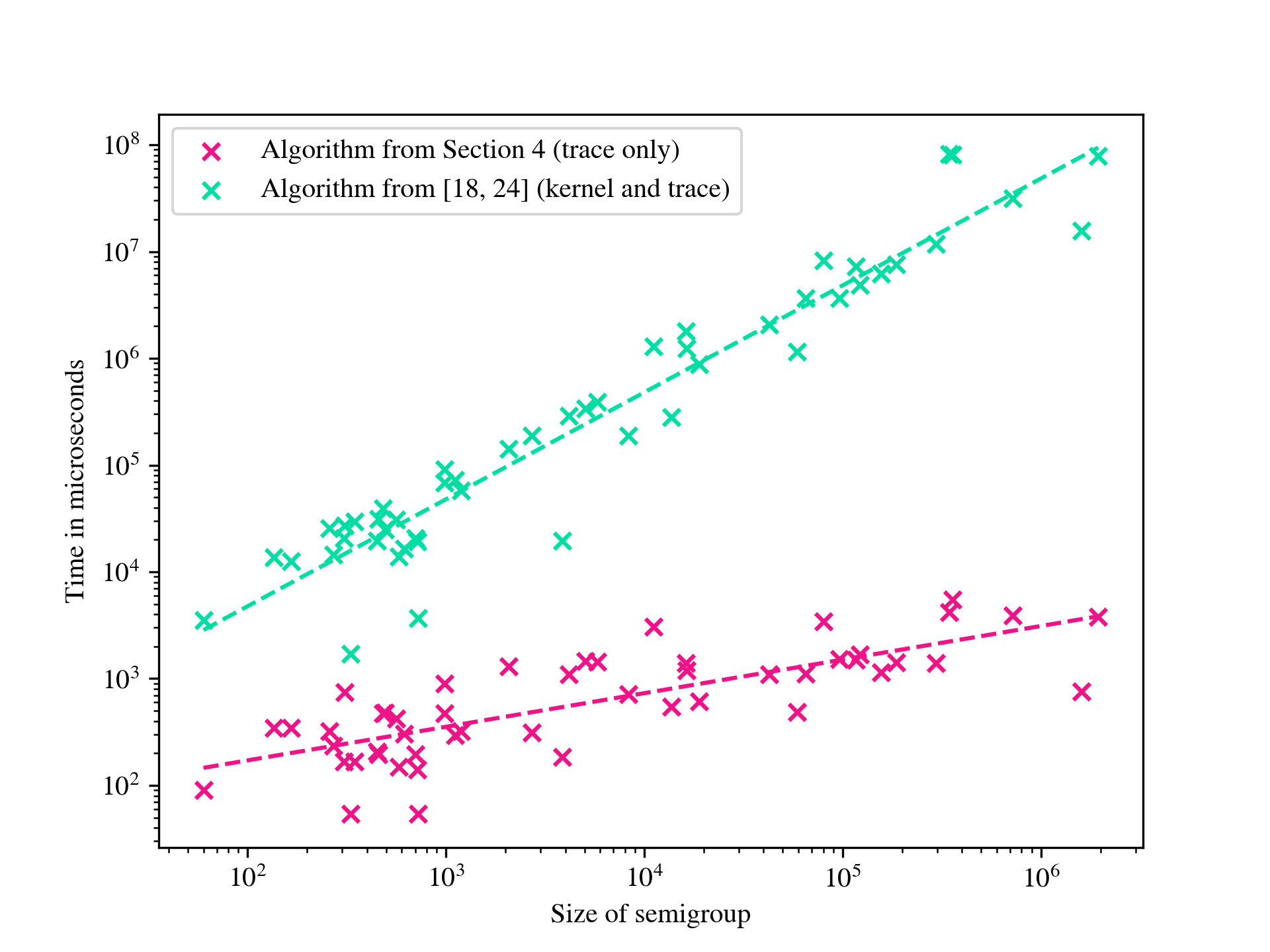}
    \caption{Comparison of the run-times of an implementation of the
      algorithm described in \cref{section-compute-the-trace} and the earlier
      implementation in~\cite{Semigroups} described in \cite{Torpey}
    for computing the trace of a congruence on an inverse semigroup.}
    \label{figure-trace}
  \end{figure}

  \begin{figure}
    \includegraphics[width=0.7\textwidth]{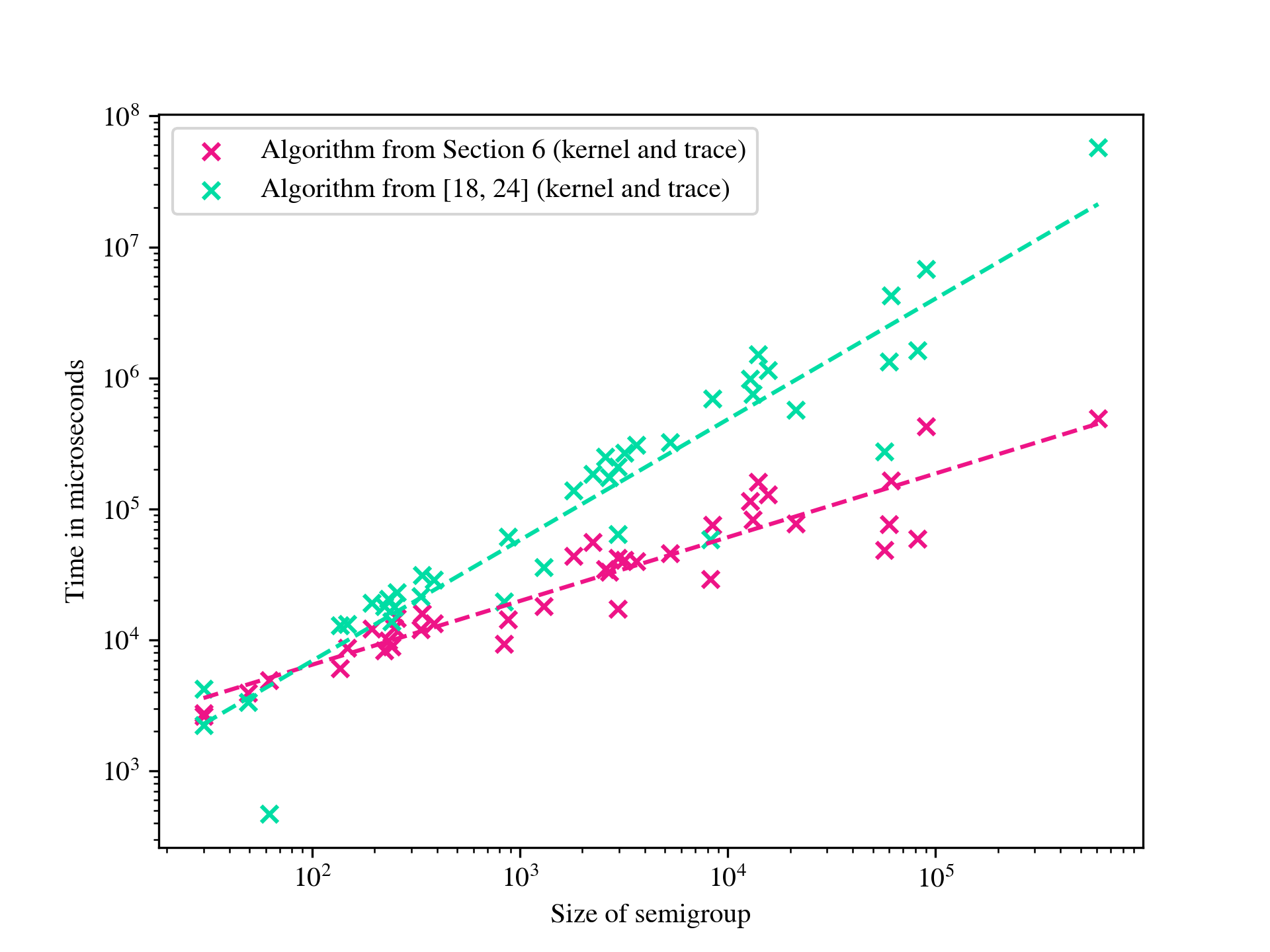}
    \caption{Comparison of the run-times of an implementation of the
      algorithm described in \cref{section-compute-the-kernel} and the earlier
      implementation in~\cite{Semigroups} described in \cite{Torpey}
    for computing the kernel and trace of a congruence on an inverse semigroup.}
    \label{figure-kernel}
  \end{figure}

  \begin{figure}
    \includegraphics[width=0.7\textwidth]{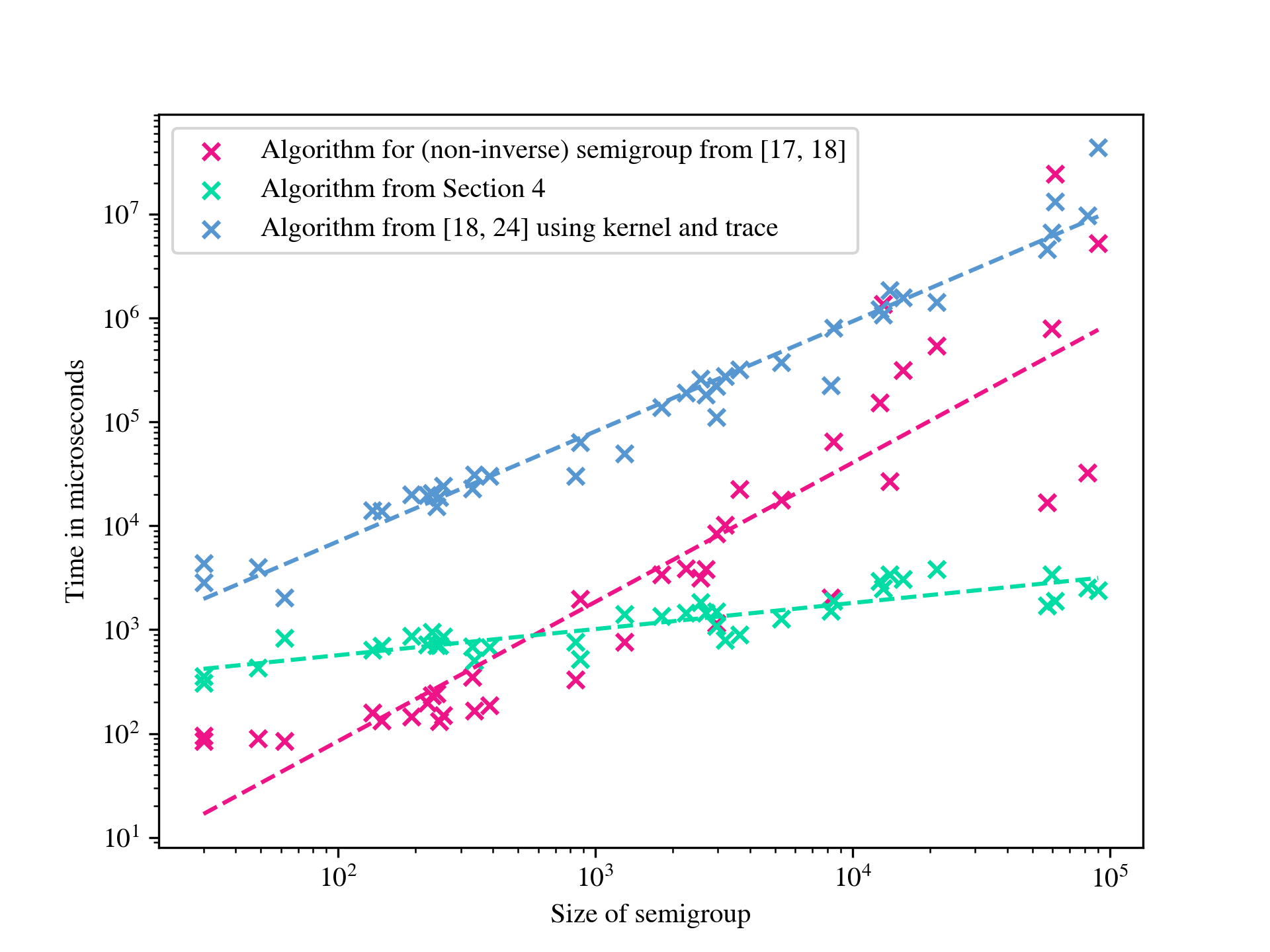}
    \caption{Comparison of the run-times of the following algorithms
      for computing
      the number of classes of a congruence: the implementation of the algorithm
      described in \cref{section-compute-the-trace}; the earlier implementation
      in~\cite{Semigroups} described in \cite{Torpey} using the
      kernel and trace;
      and the algorithm implemented in~\cite{libsemigroups}
      and~\cite{Semigroups} for finding a congruence on a (not
      necessarily inverse)
    semigroup.}
    \label{figure-nr-classes}
  \end{figure}

  \begin{figure}
    \includegraphics[width=0.7\textwidth]{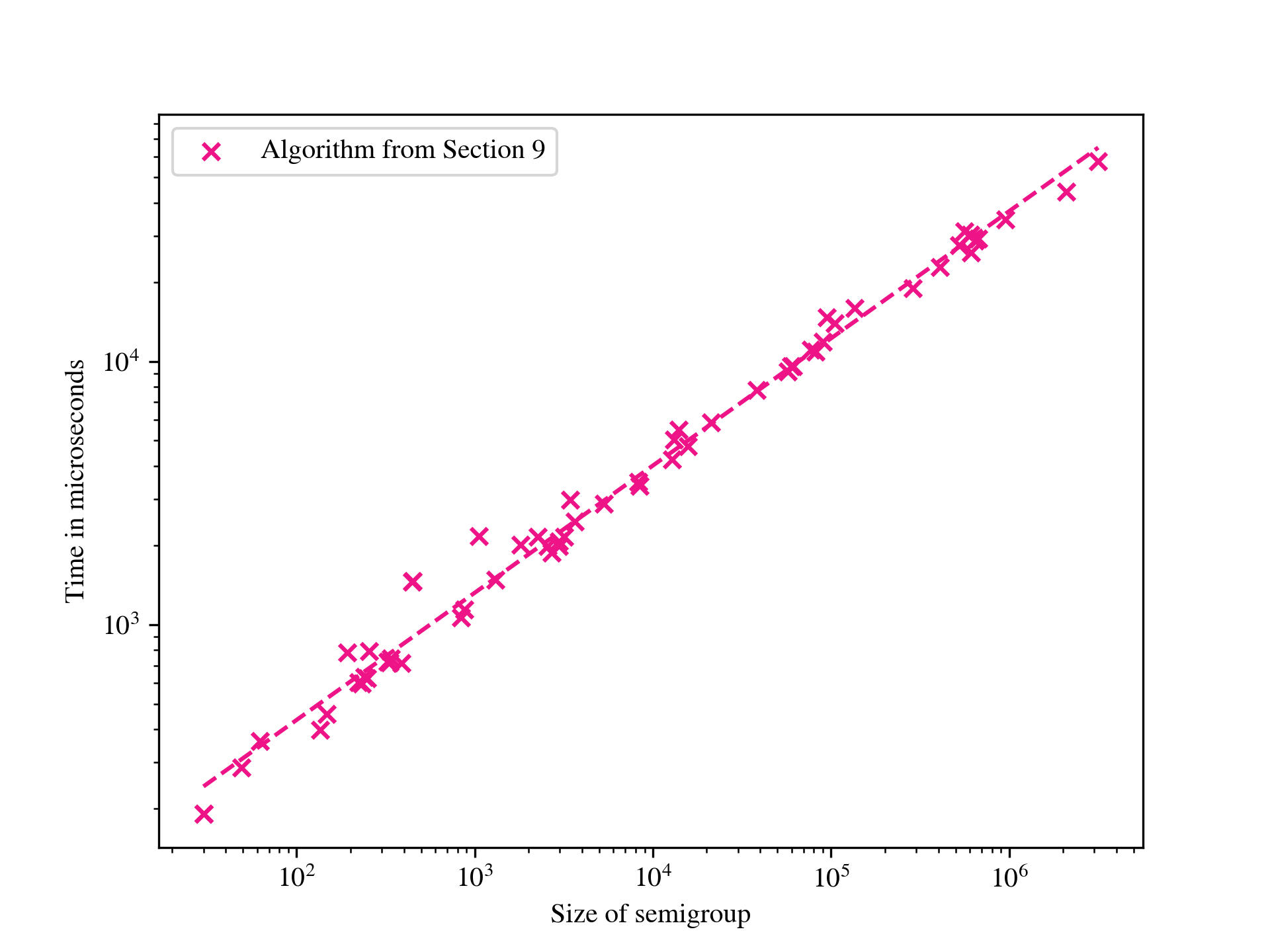}
    \caption{The run-times of an implementation of the algorithm described in
      \cref{section-max-idempotent-sep} for computing the maximum idempotent
    separating congruence of an inverse semigroup.}
    \label{figure-max-idem-sep}
  \end{figure}

  \end{document}